\crefname{example}{Example}{Examples}
\newcommand{\R}{\mathbb{R}}
\newcommand{\B}{\mathbb{B}}
\newcommand{\Sph}{\mathbb{S}}
\DeclareMathOperator*\dom{dom}
\DeclareMathOperator*\conv{conv}
\DeclareMathOperator*\pos{pos}
\DeclareMathOperator*\gph{gph}
\DeclareMathOperator*\inte{int}
\DeclareMathOperator*\proj{proj}
\DeclareMathOperator*\lip{lip}
\crefname{hypothesis}{Hypothesis}{Hypotheses}
\title{Relative Lipschitz-like property of parametric systems via projectional coderivatives\thanks{Submitted to the editors DATE.
\funding{This work was funded by ....}}}
\author{Wenfang Yao\thanks{Department of Applied Mathematics, The Hong Kong Polytechnic University,  Hong Kong
  (\email{dorothy.yao@polyu.edu.hk}, \email{mayangxq@polyu.edu.hk}).}
\and Xiaoqi Yang\footnotemark[2]}
\begin{document}

\maketitle

\begin{abstract}
This paper concerns upper estimates of the projectional coderivative of implicit mappings and corresponding applications on analyzing the relative Lipschitz-like property. Under different constraint qualifications, we provide upper estimates of the projectional coderivative for solution mappings of parametric systems. For the solution mapping of affine variational inequalities, a generalized critical face condition is obtained for sufficiency of its Lipschitz-like property relative to a polyhedral set within its domain under a constraint qualification. The equivalence between the relative Lipschitz-like property and the local inner-semicontinuity for polyhedral multifunctions is also demonstrated. For the solution mapping of linear complementarity problems with a $Q_0$-matrix, we establish a sufficient and necessary condition for the Lipschitz-like property relative to its convex domain via the generalized critical face condition and its combinatorial nature.
\end{abstract}

\begin{keywords}
parametric systems, relative Lipschitz-like property, generalized Mordukhovich criterion, affine variational inequality, linear complementarity problem
\end{keywords}

\begin{MSCcodes}
49J40, 49J53, 49K40, 90C31, 90C33  
\end{MSCcodes}


\section{Introduction}
In this paper, we focus on the relative Lipschitz-like property of the solution mapping of the  following fully parametric system 
\begin{equation}\label{eqn:defofS-ParaSys}
    S(w) := \left\{ x\in \R^n  \mid 0\in G(w, x) + M(w, x)  \right\}
\end{equation}
where $G: \R^{m+n} \rightarrow \R^d$ is a $\mathcal{C}^1 $ mapping, and $M: \R^{m + n} \rightrightarrows \R^d$ is a multifunction with  closed graph. The Lipschitz-like property, originated from \cite{aubin1984lipschitz}, plays a central role in stability analysis. Mordukhovich criterion provides a complete characterization of Lipschitz-like property by using the coderivative, see \cite{mordukhovich1992sensitivity,mordukhovich1994generalized,VaAn}.

An upper estimate, as well as the attainable equality, of the coderivative of the mapping $S$ in \cref{eqn:defofS-ParaSys} was obtained in \cite{LevyMord2004} under some constraint qualifications, and applied to obtain the Lipschitz-like property for the stationary point multifunction of a parametric optimization problem. On the other hand, a critical face condition was developed in \cite{dontchev1996} (see also \cite{dontchev2009implicit}) as sufficient and necessary conditions for the Lipschitz-like property of the solution mapping of an affine variational inequality problem over a polyhedral set. 

Sufficient conditions for Lipschitz-like property of quasi-variational inequalities was obtained in \cite{mordukhovich2007coderivative} by developing new coderivative calculus for special compositions. Sufficient conditions for the Lipschitz-like property of implicit multifunctions (that is, $G$ in \cref{eqn:defofS-ParaSys} vanishes) was established in \cite{GfrOut2016} by using a directional limiting coderivative, which was introduced in \cite{gfrerer2013directional} (with a slightly different form in  \cite{ginchevdirectionally}). A formula was also derived for computing the directional limiting coderivative of the normal-cone map with a polyhedral set, which matches the well known critical face
condition framework in \cite{dontchev1996}. Determinantal conditions for the existence of a single-valued, Lipschitz continuous, piecewise-affine solution trajectory to an affine variational inequality subject
to both a canonical parameter and a perturbed normal cone of a polyhedral convex cone was obtained in \cite{lu2008variational}. Further extension of critical face condition to the Lipschitz-like property of solution mapping to a generalized equation can also be found in \cite{gfrerer2020aubin} under the framework of \cite{mordukhovich2007coderivative}. Characterizations of Lipschitz-like property for the solution mapping of linear semi-infinite and infinite systems were obtained in \cite{canovas2010variational} by developing a Mordukhovich criterion in an arbitrary Banach space based on generalized differentiation.
For more investigation on parametric optimization problems, see monographs by \cite{bonnans2013perturbation,dontchev2009implicit,ioffe2017variational,klatte2006nonsmooth}.

The coderivative of a normal cone mapping was computed and applied to give some sufficient condition for the Lipschitz-like property of the stationary point multifunction of minimizing a quadratic function with a ball constraint in \cite{lee2014coderivatives}. In \cite{Huyen2019,Huyen2016}, the condition for coderivative equality in \cite{LevyMord2004} was applied to study the Lipschitz-like property and the Robinson metric regularity of a parametric affine constraint system with a closed set under full perturbations.
See also a recent monograph \cite{lee2005quadratic} on the study of stability of parametric quadratic programs and affine variational inequalities. 

On the application side of stability theory, the calmness of the solution mapping of the Lasso relative to the positive half-line was established in \cite{bickeletal2009,candestao2005} (see also \cite{huetal2017}  for the relative $q$-order calmness for the $\ell_q$-regularization problem  ($0< q \leq 1$).) As noted in \cite{gfrerer2013directional}, in order to guarantee some stationarity conditions, one may only need a regular behavior of the constraint systems with respect to one single critical direction, not on the whole space. These applications of stability properties are concerned with the case where the reference point may lie on the boundary of the domain or a set under consideration. In the case of the Lipschitz-like property, the well-known Mordukhovich criterion is not applicable. Recently by virtue of directional limiting coderivative of the normal-cone mapping and a variant of critical face condition, sufficient conditions  were obtained in \cite{Benko2020} for the Lipschitz-like property relative
to a closed set for the solution map of a class of parameterized variational systems. 
By employing the projection of the normal cone of the restricted graph of a multifunction on the product of the tangent cone of the concerned closed set and the solution space, a projectional coderivative was introduced and applied in \cite{Meng2020} to derive a generalized Mordukhovich criterion for the Lipschitz-like property relative to a closed and convex set. 

In this paper we begin by giving a comparison between the sufficient conditions for relative Lipschitz-like property in  \cite{Benko2020,Meng2020}. While an example was given to illustrate that the sufficient condition of the generalized Mordukhovich criterion in \cite{Meng2020} holds but the sufficient condition in \cite{Benko2020} does not hold, we further show that the sufficient condition of the relative Lipschitz-like property for an explicit multifunction in \cite{Benko2020} implies the one in \cite{Meng2020} when the relative set is closed and convex.
We then focus on the investigation of projectional coderivative and Lipschitz-like property of fully parametric systems. We first obtain an upper estimate of the projectional coderivative of the solution mapping (\ref{eqn:defofS-ParaSys}) by posing different constraint qualifications. Following \cite{LevyMord2004} and using the definition of the projectional coderivative, we calculate the outer limit of the projection of the normal cone of the restricted graph to the tangent cone of the relative set. The upper estimate becomes tight under a regularity condition and the relative set being a manifold. We also obtain some upper estimates of the projectional coderivative of (\ref{eqn:defofS-ParaSys}) for the two special cases (i) $G(w,x)$ vanishes and (ii) $M(w,x) = M(x)$ and the relative set is equal to the dom$\ S$. 

The obtained upper estimates of the projectional coderivative of $S(x)$ are applied to an affine variational inequality (AVI), where $G(q,x) = q + Mx$ and $M(x) = N_C(x)$ with $M$ being an $n \times n$ matrix and $C$ a polyhedron. Some upper estimates of the projectional coderivative of the solution mapping of the AVI are obtained  under different constraint qualifications and regularity conditions. 
The upper estimate is tight if either the graph of the normal cone $N_C$ and the relative set are both regular or the relative set is equal to the domain of $S$. For studying the relative Lipschitz-like characterization of AVI, we consider that the relative set is a polyhedral subset of the domain. We obtain a sufficient condition for $S$ to be relative Lipschitz-like under a constraint qualification and represent this condition in the form of critical face condition as in \cite{dontchev1996}. This sufficient condition becomes necessary if the graph of the normal cone $N_C$ is regular. We also obtain a characterization of relative Lipschitz-like property by virtue of a relative inner semicontinuity around a point. We will apply the obtained results for AVI to a linear complementarity problem (LCP) with $Q_0$ matrix. It is known that the domain of LCP with $Q_0$ matrix is a polyhedron, see \cite{LCP}. We thus consider this domain as the relative set. Therefore the estimate of the projectional coderivative is tight and the constraint qualification required for the AVI holds automatically so that we are able to obtain sufficient and necessary for the relative Lipschitz-like property for LCP with $Q_0$ matrix. We represent this condition in terms of a normal cone of the complementarity conditions of the LCP. Furthermore we apply this condition to evaluate the relative Lipschitz-like modulus and represent it as a computable form.

The organization of the paper is as follows. \Cref{sect:Prelim} introduces the standard notations and tools.  \Cref{sect:ParaSys} presents upper estimates of projectional coderivatives of parametric systems. \Cref{sect:AVI} characterizes the Lipschitz-like property of the solution mapping of affine variational inequalities relative to a polyhedral set. \Cref{sect:LCP} gives a sufficient and necessary condition of the Lipschitz-like property of the solution mapping of linear complementarity problem with $Q_0$ matrix relative to its domain.

\section{Preliminaries}\label{sect:Prelim}
In this section, we review some notations and preliminary results that will be used in the following sections. The notations adopted are standard in variational analysis. Most of them can be found in monographs \cite{mordukhovich2006variational} and \cite{VaAn}.

The norm and scalar product of an Euclidean space $\R^n$ are denoted as  $\| \cdot \| $ and $\langle \cdot, \cdot \rangle$ respectively. The symbol $\B_r (x) $ stands for the closed unit ball with radius $r > 0 $ centered at $x$ and $\B := \B_1(0)$. By $A^* $ we denote the transpose of matrix $A$ and also the adjoint operator of linear operator $A$  and by $A^{-1}$ we denote the inverse mapping of $A$. 

For a vector $v \in \R^n$ we denote $[v] := \{ \lambda v \mid \lambda \in \R \}$ the linear subspace generated by $v$. For a nonempty set $C\subseteq \R^n$, the interior, the convex hull and the positive hull of $C$ are denoted respectively by $\inte C$, $\conv C$  and $\pos C$. The orthogonal complement $C^\perp$ and the polar cone $C^*$ 
are defined respectively by
\begin{align*}
    & C^\perp  :=\{v\in \R^n\mid \langle v, x\rangle=0, \ \forall x\in C\}, \\
    & C^* :=\{v\in \R^n\mid \langle v, x\rangle\leq 0, \ \forall x\in C\}. 
  \end{align*}
The distance from $x$ to $C$ is defined by $d(x,C):=\inf_{y\in C}||y-x||.$
The projection mapping $\proj_C$ is defined by ${\rm proj}_C (x):=\{y\in C\mid d(x,y) =d(x,C)\}.$
For a set $X\subset \R^n$, we denote the projection of $X$ onto $C$  by
$$
{\rm proj}_C X:=\{y\in C\mid \exists x\in X,\,\mbox{with}\, d(x,y)=d(x,C)\}.
$$
If $C=\emptyset$, by convention we set that $d(x, C):=+\infty$, ${\rm proj}_C(x):=\emptyset$, and ${\rm proj}_C X:=\emptyset$.

Let $x\in C$. We use $T_C(x)$ to denote the {\it tangent/contingent cone} to $C$ at $x$, i.e. $w\in T_C(x)$ if there exist sequences $t_k\searrow 0$ and $\{w_k\}\subset \R^n$ with $w_k\rightarrow w$ and $x+ t_k w_k\in C , \  \forall k$. 
The {\it regular/Fr\'{e}chet  normal cone}, $\widehat{N}_C(x)$, is the polar cone of $T_C(x)$, defined by 
$$\widehat{N}_C(x) = \left\{ v\in \R^n \ \left|  \ \limsup_{x'\xrightarrow[x'\neq x]{C}x}  \frac{\langle v, x'-x \rangle}{\|x'- x\|} \leq 0 \right. \right\}.  $$
The {\it(basic/limiting/Mordukhovich) normal cone} to $C$ at $x$, $N_C(x)$, is defined via the outer limit of $\widehat{N}_C$ 
as 
$$N_C(x) 
:= \left\{ v \in \R^n \mid \exists \text{ sequences } x_k \xrightarrow[]{C} x,  \ v_k \rightarrow v, \ v_k \in \widehat{N}_C(x_k) ,  \forall k\right\}. $$

We say that $C$ is {\it locally closed} at a point $x\in C$ if $C\cap U$ is closed for some closed neighborhood $U \in \mathcal{N}(x)$. 
$C$ is said to be {\it regular at} $x$ in the sense of Clarke if it is locally closed at $x$ and $\widehat{N}_C(x)=N_C(x)$. $C$ is {\it regular around} $x$ if there exists a neighborhood of $x$ such that $C$ is regular at every point in it. 
For any $x \notin C$, we set by convention $T_C (x) = \emptyset, \ N_C(x)=\emptyset, \  \widehat{N}_C(x) = \emptyset$.

Let $C\subset \R^n$ be a nonempty convex set.  A {\it face} of $C$ is a convex subset $C'$ of $C$ such that every closed line segment in $C$ with a relative interior point in $C'$ has both endpoints in $C'$. 
See the book \cite{CvxAn} for more details. Moreover, we say $C$ is a {\it polyhedral} set if it can be expressed as the intersection of a finite number of closed half-spaces of hyperplanes (and therefore convex). 

For a multifunction $S:\R^n\rightrightarrows \R^m$, we denote by
$\gph S:=\{(x, u)\mid u\in S(x)\}$ and $\dom S:=\{x\mid S(x)\not=\emptyset\}$ the {\it graph} and the {\it domain} of $S$, respectively. 

For a set $X\subset \R^n$, we denote by
  \begin{equation*} \label{restrX}
  S|_X(x):=  S(x) \ \mbox{if } x\in X; \
    \emptyset \ \mbox{if } x\not\in X,
  \end{equation*}
  the {\it restricted mapping} of $S$ on $X$. It is clear to see that $\gph S|_X=\gph S\cap (X\times \R^m)$, $\dom  S|_X=X\cap \dom S$ 
and also
 \begin{equation*} \label{restrXlimit}
  \displaystyle \limsup_{\tiny x\xrightarrow[]{X}\bar{x}} S(x) = \limsup_{\tiny x\rightarrow  \bar{x}} S|_X(x).
 \end{equation*}
\begin{definition}[Outer semicontinuity, {\cite[Definition 5.4]{VaAn}}] 
A multifunction $S: \R^n \rightrightarrows \R^m$ is outer semicontinuous (osc) at $\bar{x}$ if
$ \limsup_{\tiny x\rightarrow \bar{x}}S(x) = S(\bar{x}).$
\end{definition}

\begin{definition}[local boundedness relative to a set, {\cite[Page 162]{VaAn}}]
For a multifunction $S: \R^n \rightrightarrows \R^m$, a closed set $X \subset \R^n$ and a given point $\bar{x} \in X$, if for some neighborhood $V\in \mathcal{N}(\bar{x})$, $S(V\cap X)$ is bounded, we say $S$ is locally bounded relative to $X$ at $\bar{x}$. Such definition is equivalent to local boundedness of $S|_X$ at $\bar{x}$.
\end{definition}

Next we present the definition of relative Lipschitz-like property of a multifunction.
\begin{definition}[Lipschitz-like property relative to a set, {\cite[Definition 9.36]{VaAn}}]\label{def-lip-like}
A multifunction $S: \R^n\rightrightarrows  \R^m$ is Lipschitz-like relative to $X$ at $\bar{x}$ for $\bar{u}$, where $\bar{x}\in X$ and $\bar{u}\in S(\bar{x})$, if $\gph S$ is locally closed at $(\bar{x}, \bar{u})$ and there are neighborhoods $V\in \mathcal{N}(\bar{x})$, $W\in \mathcal{N}(\bar{u})$, and a constant $\kappa \in \R_+$ such that
\begin{equation}\label{Ap-def}
S(x')\cap W\subset S(x)+\kappa\|x'-x\|\B\quad \forall x,x'\in X\cap V.
\end{equation}
The graphical modulus of $S$ relative to $X$ at $\bar{x}$ for $\bar{u}$ is defined as
\begin{equation*}
    \begin{aligned}
       {\lip}_X S(\bar{x}\mid\bar{u}):=\inf\{\kappa\geq  0 & \mid  \exists V\in \mathcal{N}(\bar{x}), W\in \mathcal{N}(\bar{u}),\;\mbox{such that}\\[0.3cm]
&S(x')\cap W\subset S(x)+\kappa\|x'-x\|\B, \ \forall x,x'\in X\cap V\;\}.
    \end{aligned}
\end{equation*}
The property with $V$ in place of $X\cap V$ in \cref{Ap-def} is the Lipschitz-like property along with the graphical modulus $\lip S(\bar{x} \mid \bar{u})$. 
\end{definition}

Another notion of Lipschitz continuity mentioning $\bar{u}$ is the {\it local Lipschitz continuity around $(\bar{x}, \bar{u})$}, with the sets $S(x)$, $X\cap V$ being replaced by $S(x) \cap W$ and $X\cap V$ respectively (see \cite{dontchev1996}).
It is also known as {\it truncated Lipschitz continuity}, see \cite[p.165]{dontchev2009implicit}, and is generally stronger than the Lipschitz-like property. 

Now we recall a projectional coderivative and a complete characterization of Lipschitz-like property relative to a closed and convex set.
\begin{definition}[{\cite[Definition 2.2]{Meng2020}}]\label{def:ProjCode}
  The projectional coderivative,  $D^*_X S(\bar{x} \mid \bar{u}) : \R^m \rightrightarrows \R^n$  of multifunction $S: \R^n \rightrightarrows \R^m$ at $\bar{x} \in X\subseteq \R^n$ for any $\bar{u} \in S(\bar{x})$ with respect to $X$ is defined as
  \begin{equation*}
  t^*\in D^*_{X}S(\bar{x}\mid \bar{u})(u^*)\Longleftrightarrow (t^*, -u^*)\in \limsup_{\tiny (x,u)\xrightarrow[]{\gph S|_X}(\bar{x}, \bar{u})} {\rm proj}_{T_X(x)\times \R^m}N_{\gph S|_X}(x,u).
  \end{equation*}
\end{definition}

A connection between projectional coderivative and coderivative is shown in the following corollary. 
\begin{corollary}\label{lm:ProjCodeInverseAt0-Incl}
  For a multifunction $S: \R^n \rightrightarrows \R^m$, and a closed set $X \subseteq \R^n$, for any $(\bar{x}, \bar{u})\in \gph S|_X$,
  \begin{equation*}
    D^*S|_X (\bar{x} \mid \bar{u})^{-1} (0) \subseteq D^*_X S (\bar{x} \mid \bar{u})^{-1}(0).
  \end{equation*}
\end{corollary}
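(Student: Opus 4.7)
The plan is to unpack the inclusion in terms of the defining sequences of the limiting normal cone and reduce the statement to a simple continuity property of the Euclidean projection onto a closed set containing the origin.

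First I would read off the definitions. By \cref{def:ProjCode} together with the standard formula $t^*\in D^*S|_X(\bar{x}\mid\bar{u})(u^*)\Longleftrightarrow (t^*,-u^*)\in N_{\gph S|_X}(\bar{x},\bar{u})$, the condition $u^*\in D^*S|_X(\bar{x}\mid\bar{u})^{-1}(0)$ means exactly that $(0,-u^*)\in N_{\gph S|_X}(\bar{x},\bar{u})$, while $u^*\in D^*_X S(\bar{x}\mid\bar{u})^{-1}(0)$ means that $(0,-u^*)$ belongs to the outer limit
$$
\limsup_{(x,u)\xrightarrow[]{\gph S|_X}(\bar{x},\bar{u})} \proj_{T_X(x)\times \R^m} N_{\gph S|_X}(x,u).
$$
So the task is to produce, starting from an element of $N_{\gph S|_X}(\bar{x},\bar{u})$ of the form $(0,-u^*)$, a sequence $(v_k',w_k)\to(0,-u^*)$ with each $(v_k',w_k)\in\proj_{T_X(x_k)\times\R^m}N_{\gph S|_X}(x_k,u_k)$ for some $(x_k,u_k)\in\gph S|_X$ approaching $(\bar{x},\bar{u})$.

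Next I would invoke the definition of the limiting normal cone at $(\bar{x},\bar{u})$: there exist $(x_k,u_k)\xrightarrow[]{\gph S|_X}(\bar{x},\bar{u})$ and $(v_k,w_k)\to(0,-u^*)$ with $(v_k,w_k)\in\widehat{N}_{\gph S|_X}(x_k,u_k)\subseteq N_{\gph S|_X}(x_k,u_k)$. Because $x_k\in X$ and $X$ is closed, the tangent cone $T_X(x_k)$ is a nonempty closed set containing the origin, so the Euclidean projection $\proj_{T_X(x_k)}(v_k)$ is nonempty; pick an element $v_k'$ from it. Since the $\R^m$-component is unaffected under $\proj_{T_X(x_k)\times\R^m}$, the pair $(v_k',w_k)$ lies in $\proj_{T_X(x_k)\times\R^m}N_{\gph S|_X}(x_k,u_k)$ as required.

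The only analytical step is to verify $v_k'\to 0$. Using $0\in T_X(x_k)$ one has the trivial bound $\|v_k-v_k'\|=d(v_k,T_X(x_k))\le \|v_k\|$, whence $\|v_k'\|\le 2\|v_k\|\to 0$. Therefore $(v_k',w_k)\to(0,-u^*)$, which places $(0,-u^*)$ in the outer limit above and yields $u^*\in D^*_X S(\bar{x}\mid\bar{u})^{-1}(0)$. I do not foresee any real obstacle; the only subtle point worth flagging is that convexity of $T_X(x_k)$ is \emph{not} used — the distance bound relies only on the fact that $0$ belongs to the closed set onto which we project.
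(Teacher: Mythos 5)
Your proposal is correct: every step checks out, including the key bound $\|v_k-v_k'\|=d(v_k,T_X(x_k))\le\|v_k\|$ from $0\in T_X(x_k)$, which indeed forces $v_k'\to 0$, and the observation that the $\R^m$-component passes through the projection untouched. The paper's own proof is shorter, though: it never unpacks the limiting normal cone into sequences of regular normals at nearby points. It simply notes that $(0,-u^*)\in N_{\gph S|_X}(\bar{x},\bar{u})$ and $\proj_{T_X(\bar{x})}(0)=0$, so $(0,-u^*)\in\proj_{T_X(\bar{x})\times\R^m}N_{\gph S|_X}(\bar{x},\bar{u})$, and this set is already contained in the outer limit in \cref{def:ProjCode} because the limsup over $(x,u)\xrightarrow[]{\gph S|_X}(\bar{x},\bar{u})$ admits the constant sequence at the base point. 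So the paper projects the single element $(0,-u^*)$ at $(\bar{x},\bar{u})$ itself, whereas you approximate it by regular normals $(v_k,w_k)$ at nearby $(x_k,u_k)$ and project there. Your route costs an extra approximation layer but is equally valid, shows explicitly that the limit can be realized with regular normals at nearby points, and, as you rightly flag, uses neither convexity of $T_X(x_k)$ nor anything about $X$ beyond $0\in T_X(x_k)$ — the same is true of the paper's argument, which only needs $0\in T_X(\bar{x})$.
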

\begin{proof}
  For $u^* \in D^*S|_X (\bar{x} \mid \bar{u})^{-1} (0)$,it is equivalent that
  $(0, -u^* ) \in N_{\gph S|_X}(\bar{x}, \bar{u}).$ As ${\rm proj}_{T_X(\bar{x}) } (0) =0$, we have $(0, -u^* ) \in {\rm proj}_{T_X(\bar{x}) \times \R^m } N_{\gph S|_X}(\bar{x}, \bar{u}).$ Then by definition of projectional coderivatives, $u^*  \in D^*_X S( \bar{x} \mid \bar{u})^{-1}(0)$.
\end{proof}

\begin{theorem}[generalized Mordukhovich criterion, {\cite[Theorem 2.4]{Meng2020}}]\label{thm-genMordcri}
 Consider a multifunction $S: \R^n\rightrightarrows  \R^m$ and $(\bar{x}, \bar{u})\in \gph S|_X$. Suppose that $\gph S$ is locally closed at $(\bar{x}, \bar{u})$ and that $X$ is closed and convex.  Then $S$ has the Lipschitz-like property relative to $X$ at $\bar{x}$ for $\bar{u}$ if and only if $D^*_{X}S(\bar{x}\mid\bar{u})(0)=\{0\}.$
 \end{theorem}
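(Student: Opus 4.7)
The overall plan is to mimic the classical Mordukhovich criterion, using the fact that for closed convex $X$ the tangent cone $T_X(x)$ and the normal cone $N_X(x)$ are mutually polar, so any vector decomposes orthogonally via Moreau's theorem as $p=\proj_{T_X(x)}(p)+\proj_{N_X(x)}(p)$. Thus the projectional coderivative captures precisely the \emph{tangential} part of a normal to $\gph S|_X$, while the $N_X$-part, which is a ``free'' correction coming from intersecting with $X\times\R^m$, is factored out. This makes the condition $D^*_X S(\bar x\mid\bar u)(0)=\{0\}$ the natural substitute for the classical $D^*S(\bar x\mid\bar u)(0)=\{0\}$ in the relative setting.

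For necessity, I will start from an arbitrary Fr\'echet normal $(p^*,q^*)\in \widehat N_{\gph S|_X}(x,u)$ at a point $(x,u)\in\gph S|_X$ close to $(\bar x,\bar u)$ and establish the bound $\|\proj_{T_X(x)}(p^*)\|\le \kappa\|q^*\|$, where $\kappa$ is the relative Lipschitz-like modulus. The key input is the relative Lipschitz-like inclusion: for every $x'\in X$ near $x$ there exists $u'\in S(x')$ with $\|u'-u\|\le \kappa\|x'-x\|$. Substituting $(x',u')\in\gph S|_X$ into the defining inequality for $\widehat N$ yields
\[
\langle p^*, x'-x\rangle \;\le\; \bigl(o(1)+\kappa\|q^*\|\bigr)\,\|x'-x\| \qquad \forall\,x'\in X\text{ near }x,
\]
and since $X$ is convex with $T_X(x)=\cl(\R_+(X-x))$, a scaling-and-limit argument extends this to $\langle p^*,v\rangle\le \kappa\|q^*\|\|v\|$ for all $v\in T_X(x)$. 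The characterization of projection onto a closed convex cone gives $\|\proj_{T_X(x)}(p^*)\|^2=\langle p^*,\proj_{T_X(x)}(p^*)\rangle$, which combined with the previous inequality yields the desired estimate. Passing from Fr\'echet to limiting projectional coderivatives by a diagonal argument, together with inner semicontinuity of $x\mapsto T_X(x)$ on convex $X$, every $t^*\in D^*_X S(\bar x\mid\bar u)(0)$ must satisfy $\|t^*\|\le \kappa\cdot 0=0$.

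For sufficiency, I will argue by contraposition. If $S$ is not Lipschitz-like relative to $X$ at $(\bar x,\bar u)$, there exist $\kappa_k\to\infty$, points $x_k, x_k'\in X$ with $x_k,x_k'\to\bar x$, and $u_k'\in S(x_k')$ with $u_k'\to\bar u$, satisfying $d(u_k', S(x_k))>\kappa_k\|x_k'-x_k\|$. I will then apply Ekeland's variational principle to a suitable distance-type function, for instance $(x,u)\mapsto \|u-u_k'\|$ restricted to $\gph S|_{X\cap \B_{r_k}(\bar x)}$, to obtain a near-minimizer $(\tilde x_k,\tilde u_k)\in \gph S|_X$ whose optimality condition produces a Fr\'echet normal $(p_k^*,q_k^*)\in\widehat N_{\gph S|_X}(\tilde x_k,\tilde u_k)$ with $\|q_k^*\|\to 0$ at a controlled rate while $\|\proj_{T_X(\tilde x_k)}(p_k^*)\|$ stays bounded away from $0$. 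A subsequential limit then yields a nonzero $t^*\in D^*_X S(\bar x\mid\bar u)(0)$, contradicting the hypothesis.

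The main obstacle I expect is the sufficiency direction, specifically tuning the Ekeland construction so that the Fr\'echet normals produced have a nonvanishing \emph{projected} component rather than lying in $N_X(\tilde x_k)\times\R^m$ and being killed by $\proj_{T_X(\tilde x_k)\times\R^m}$. Convexity of $X$ is essential here: the violation direction $x_k'-x_k$ must be recognizably tangent to $X$, and the bounds derived from Ekeland must survive the projection step. A secondary technical difficulty, present in both halves, is that $x\mapsto T_X(x)$ is not continuous in general; the limit passages from Fr\'echet to basic normals and back again rely on the inner semicontinuity of the tangent cone on a closed convex set, which is the precise point at which the convexity hypothesis on $X$ is used.
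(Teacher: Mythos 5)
First, a caveat: the paper does not prove \cref{thm-genMordcri}; it is quoted from \cite[Theorem 2.4]{Meng2020}, so there is no in-paper argument to compare yours against, and I can only assess your proposal on its own terms. (The cited reference, as I recall, proves the result by a different route, reducing to the classical Mordukhovich criterion via composition with the metric projection onto the convex set $X$, rather than rerunning the Fr\'echet-normal/Ekeland machinery.) Your necessity half is sound as sketched: the estimate $\|\proj_{T_X(x)}(p^*)\|\le\kappa\|q^*\|$ for Fr\'echet normals of $\gph S|_X$ follows exactly as you describe; the identity $\|\proj_{K}(p^*)\|=\sup\{\langle p^*,v\rangle: v\in K\cap\B\}$ for a closed convex cone $K$ lets you pass the bound through the limiting procedure using the inner semicontinuity of $x\mapsto T_X(x)$ on a convex set; and since \cref{def:ProjCode} only requires the \emph{projections} to converge, possible unboundedness of the normals themselves is harmless.

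The genuine gap is in the sufficiency half: the Ekeland functional you name, $(x,u)\mapsto\|u-u_k'\|$, penalizes the wrong variable. Minimizing the range-variable distance over $\gph S|_X$ produces, at the Ekeland point, a Fr\'echet normal $(p^*,-q^*)$ with $\|q^*\|\approx 1$ and $\|p^*\|$ small; such a normal is perfectly consistent with the estimate $\|\proj_{T_X(\tilde x_k)}(p^*)\|\le\kappa\|q^*\|$ that the hypothesis $D^*_XS(\bar x\mid\bar u)(0)=\{0\}$ yields on a neighborhood, so no contradiction can be extracted. You need a normal of the opposite shape: projected first component of size $\approx 1$, second component $O(\varepsilon/\lambda)$. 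This comes from minimizing $(x',u')\mapsto\|x'-x_k\|$ over $\gph S|_X$, started at a point $(x_k',u_k')$ with $u_k'\in S(x_k')$ nearly realizing $d(x_k,\,S^{-1}(u_k')\cap X)$ (i.e., from proving the relative metric-regularity reformulation of the Lipschitz-like property); if the Ekeland point has $\tilde x_k\ne x_k$, the subgradient of $\|\cdot-x_k\|$ there is the unit vector $w^*=(\tilde x_k-x_k)/\|\tilde x_k-x_k\|$, whose negative lies in $X-\tilde x_k\subset T_X(\tilde x_k)$ by convexity, whence $\|\proj_{T_X(\tilde x_k)}(p^*)\|\ge\langle p^*,-w^*\rangle\ge 1-\varepsilon/\lambda$. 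This is exactly the role you correctly assign to convexity, but it is only available when the \emph{domain} variable is the one being penalized. With that substitution, and the standard compactness/rescaling argument upgrading $D^*_XS(\bar x\mid\bar u)(0)=\{0\}$ to a uniform Fr\'echet-normal estimate near $(\bar x,\bar u)$, your outline closes.
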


When $\bar{x}\in \inte X$, the projectional coderivative mapping,  $D^*_{X}S(\bar{x}\mid \bar{u})$ reduces to the coderivative $D^*S(\bar{x}\mid \bar{u})$ and accordingly, the generalized Mordukhovich criterion reduces to the Mordukhovich criterion, see \cite{mordukhovich1992sensitivity,VaAn}.

\section{Projectional coderivative and parametric systems}\label{sect:ParaSys}

In this section, we give a complete comparison between sufficient conditions for relative Lipschitz-like property in \cite{Benko2020,Meng2020} and derive some upper estimates of projectional coderivative for fully parametric system \cref{eqn:defofS-ParaSys}. 

First we introduce the definitions  of the directional limiting normal cone and the directional limiting coderivative. 
\begin{definition}[{\cite[Definition 2.3]{ginchevdirectionally}}]
  For a closed set  $\Omega\subset \R^n$ with $\bar{x}\in \Omega$  and a direction $u\in \R^n$, the directional limiting normal cone to $\Omega$ in direction $u$ at $\bar{x}$ is defined by
  \begin{equation}\label{eqn:defofDirNC}
      N_\Omega(\bar{x};u):=\limsup_{t\downarrow 0, \;u'\to u}\widehat{N}_\Omega(\bar{x}+tu'),
  \end{equation}
while for a set-valued mapping $S:\R^n\rightrightarrows  \R^m$ having locally closed graph around $(\bar{w}, \bar{x})\in \gph S$ and a pair of directions $(u, v)\in \R^n\times \R^m$, the set-valued mapping $D^*S((\bar{w},\bar{x});(u, v)):\R^m\rightrightarrows \R^n$, defined by, for all $v^*\in \R^m$,
\begin{equation}\label{eqn:defofDirCode}
    D^*S((\bar{w},\bar{x});(u, v))(v^*):=\{u^*\in \R^n\mid (u^*, -v^*)\in N_{\gph S}((\bar{w},\bar{x});(u, v))\},
\end{equation}
is called the directional limiting coderivative of $S$ in the direction $(u, v)$ at $(\bar{w}, \bar{x})$. 
\end{definition}

In the next theorem, we show that when we are referring to the set $X := \dom S$, the sufficient condition in \cite[Theorem 3.5]{Benko2020} implies the generalized Mordukhovich criterion when further $\dom S$ is closed and convex. For direct comparison, we adopt the explicit form introduced in \cite[Theorem 2.5]{Meng2020}. 

\begin{theorem}\label{thm:BenkoToGMC} 
Consider $S: \R^n\rightrightarrows  \R^m$, $\bar{x}\in X\subset \R^n$ and $\bar{u}\in S(\bar{x})$. Assume  that $\gph S$ is locally closed at $(\bar{x}, \bar{u})$ and that $\dom S$ is closed and convex around $\bar{x}$. Further assume that the following conditions are satisfied:
\begin{description}
  \item[(i)]  For every $x \in T_{\dom S} (\bar{x})$ and every sequence $t_k\downarrow 0$, there exists some $u\in \R^n$ such that
\[
\liminf_{k\to \infty}\frac{d((\bar{x}+t_kx,\bar{u}+t_ku), \gph S)}{t_k}=0.
\]
  \item[(ii)] The  equality
  \[
D^*S \left( (\bar{x},\bar{u}) ;(x, u) \right)(0) = \{0\}
\]
holds for all $x \in T_X(\bar{x})$ and $(x,u)\in T_{\gph S}(\bar{x}, \bar{u})$ with $(x, u)\not=(0, 0)$.
  \end{description}
Then $D^*_{\dom S}S(\bar{x}\mid \bar{u})(0) = \{0\} $ and $S$ has the Lipschitz-like property relative to $\dom S$ at $\bar{x}$ for $\bar{u}$. 
\end{theorem}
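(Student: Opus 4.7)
The overall plan is to verify the hypothesis $D^*_{\dom S}S(\bar x\mid\bar u)(0) = \{0\}$ of \cref{thm-genMordcri}: since $\dom S$ is closed and convex around $\bar x$ and $\gph S$ is locally closed at $(\bar x,\bar u)$, both conclusions of the theorem reduce to that single assertion. Fix $t^* \in D^*_{\dom S}S(\bar x\mid\bar u)(0)$; \cref{def:ProjCode} supplies sequences $(x_k,u_k) \xrightarrow[]{\gph S} (\bar x,\bar u)$ and $(\alpha_k^*,-u_k^*) \in N_{\gph S}(x_k,u_k)$ with $t_k^* := \proj_{T_{\dom S}(x_k)}(\alpha_k^*) \to t^*$ and $u_k^*\to 0$, and the task is to prove $t^* = 0$.

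The main move is a rescaling/direction-extraction argument that converts such a sequence into an element of the directional limiting coderivative at $(\bar x,\bar u)$. In the non-degenerate case $(x_k,u_k)\neq(\bar x,\bar u)$ eventually, set $\lambda_k := \|(x_k-\bar x,u_k-\bar u)\|$ and, along a subsequence, take $(\tilde x,\tilde u) = \lim (x_k-\bar x,u_k-\bar u)/\lambda_k$, a nonzero element of $T_{\gph S}(\bar x,\bar u)$ with $\tilde x\in T_{\dom S}(\bar x)$. If $\alpha_k^*$ stays bounded, passing to $\alpha_k^* \to \alpha^*$ and invoking the standard reformulation of \eqref{eqn:defofDirNC} via limits of basic (rather than regular) normals gives $(\alpha^*,0) \in N_{\gph S}((\bar x,\bar u);(\tilde x,\tilde u))$; condition~(ii) then forces $\alpha^* = 0$, and nonexpansiveness of the projection onto the convex cone $T_{\dom S}(x_k)$ yields $\|t_k^*\|\le\|\alpha_k^*\|\to 0$. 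If $\|\alpha_k^*\|\to\infty$, normalize to $\hat\alpha_k^* := \alpha_k^*/\|\alpha_k^*\|$ and apply the same argument to the positively rescaled pair $(\hat\alpha_k^*,-u_k^*/\|\alpha_k^*\|)\in N_{\gph S}(x_k,u_k)$; the resulting unit-norm $\hat\alpha^*\in D^*S((\bar x,\bar u);(\tilde x,\tilde u))(0)$ contradicts~(ii).

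The only remaining possibility is the degenerate case where $(x_k,u_k) = (\bar x,\bar u)$ for infinitely many $k$; here no direction comes from the base points themselves, and condition~(i) must do the work. Passing to a further subsequence and normalizing if necessary, reduce to a (bounded) limit $\alpha^* \in D^*S(\bar x\mid\bar u)(0)$; by the very definition of $N_{\gph S}$, approximate $(\alpha^*,0)$ by $(\beta_j^*,-\delta_j^*) \in \widehat N_{\gph S}(y_j,v_j)$ with $(y_j,v_j)\xrightarrow[]{\gph S}(\bar x,\bar u)$. If $(y_j,v_j)\neq(\bar x,\bar u)$ along a subsequence, the non-degenerate argument of the previous paragraph applies to this auxiliary sequence and gives $\alpha^* = 0$. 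Otherwise $(\alpha^*,0)\in\widehat N_{\gph S}(\bar x,\bar u) = T_{\gph S}(\bar x,\bar u)^*$, so that $\langle\alpha^*,v\rangle \le 0$ for every $v$ admitting a lift $(v,w)\in T_{\gph S}(\bar x,\bar u)$. Condition~(i) guarantees precisely such a lift for every $v\in T_{\dom S}(\bar x)$, whence $\alpha^* \in T_{\dom S}(\bar x)^* = N_{\dom S}(\bar x)$ by convexity of $\dom S$; Moreau's decomposition then delivers $\proj_{T_{\dom S}(\bar x)}(\alpha^*) = 0$, and continuity of projection onto the fixed cone $T_{\dom S}(\bar x)$ yields $t^* = 0$.

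The hardest part will be this last sub-subcase of the degenerate branch: condition~(i) is indispensable there, since it provides the tangent lift that couples the graph back to its domain, and convexity of $\dom S$ underwrites both the identity $T_{\dom S}(\bar x)^* = N_{\dom S}(\bar x)$ and the use of Moreau decomposition. Everywhere else the directional Mordukhovich criterion in~(ii), together with nonexpansiveness/positive homogeneity of the cone projection, suffices. Once $D^*_{\dom S}S(\bar x\mid\bar u)(0) = \{0\}$ has been established in all cases, \cref{thm-genMordcri} completes the proof.
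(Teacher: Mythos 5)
Your proposal follows essentially the same route as the paper's proof: reduce everything to showing $D^*_{\dom S}S(\bar x\mid\bar u)(0)=\{0\}$ and invoke \cref{thm-genMordcri}, split into the case $(x_k,u_k)\neq(\bar x,\bar u)$ (direction extraction, directional limiting coderivative, condition (ii)) and the degenerate case $(x_k,u_k)=(\bar x,\bar u)$ (reduction to a regular normal, the tangent lift from condition (i), polarity and the Moreau decomposition on the convex cone $T_{\dom S}(\bar x)$). Your bounded/unbounded dichotomy for $\alpha_k^*$ just makes explicit a step the paper compresses into ``$u_k^*\to 0$ indicates $x_k^*\to 0$,'' and your final step (showing $\alpha^*\in N_{\dom S}(\bar x)$ rather than pairing only with the single direction $t^*$) is an equivalent minor variant.
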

\begin{proof}
Let $t^* \in D^*_{\dom S} S (\bar{x}\mid \bar{u}) (0) $. By definition of projectional coderivative, there exist sequences 
$$(x_k, u_k) \xrightarrow[]{\gph S} (\bar{x},\bar{u}), \ (x^*_k, -u^*_k) \in N_{\gph S}(x_k, u_k), \ t^*_k = {\rm proj}_{T_{\dom S}(x_k)}(x^*_k)$$
 such that $t^*_k \rightarrow t^*$, $u^*_k \rightarrow 0$. Then by definition of normal cones, we know that there exist sequences 
 \begin{equation*}
     \small (x_{kt}, u_{kt}) \xrightarrow[]{\gph S}(x_k, u_k) , \ (x^*_{kt}, -u^*_{kt}) \in \hat{N}_{\gph S}(x_{kt} , u_{kt}) , \text{ such that } (x^*_{kt}, u^*_{kt}) \rightarrow (x^*_k, u^*_k).
 \end{equation*}
If $(x_k, u_k) \neq (\bar{x},\bar{u})$, let
\begin{equation}\label{construction} \small
    \tau_{kt} := \| (x_{kt} - \bar{x}, u_{kt} - \bar{u}) \|, \ (x'_{kt} , u'_{kt}) := \frac{(x_{kt} - \bar{x}, u_{kt} - \bar{u}) }{\|(x_{kt} - \bar{x}, u_{kt} - \bar{u})\| } , \ (x'_k , u'_k) := \frac{(x_{k} - \bar{x}, u_{k} - \bar{u}) }{\|(x_{k} - \bar{x}, u_{k} - \bar{u})\| }.
\end{equation}
Then we have $$(x'_{kt}, u'_{kt}) \rightarrow  (x'_k, u'_k) ,\ \tau_{kt} \searrow 0 \text{ and } (x^*_{kt}, -u^*_{kt}) \in \hat{N}_{\gph S} \left( (\bar{x}, \bar{u}) + \tau_{kt}(x'_{kt}, u'_{kt}) \right).$$ By definition of directional normal cone we have $(x^*_k, -u^*_k) \in N_{\gph S}\left( (\bar{x},\bar{u}); (x'_k, u'_k) \right).$
Besides, by definition of tangent cones, $(x'_k, u'_k) \in T_{\gph S}(\bar{x}, \bar{u}) \cap \Sph$ (where $\Sph$ denotes the unit sphere) and $x'_k \in T_{\dom S}(\bar{q})$. By condition (ii),  we have that $u^*_k \rightarrow 0 $ indicates $x^*_k  \rightarrow 0$. Therefore $$t^*_k  = {\rm proj}_{T_{dom S}(x_k)} (x^*_k) \rightarrow 0 = t^*.$$

If $(x_k, u_k) = (\bar{x},\bar{u})$, then we have $$(x^*, -u^*) \in N_{\gph S}(\bar{x}, \bar{u}) \text{ and }t^*  =  {\rm proj}_{T_{\dom S}(\bar{x})}(x^*), \ u^* =0.$$ Without loss of generality, we can further assume that $(x^*, -u^*) \in \hat{N}_{\gph S}(\bar{x},\bar{u})$, as otherwise by the construction in \eqref{construction} we can always find some $(x'_k, u'_k) \in T_{\gph S}(\bar{x},\bar{u}) \cap\Sph$ and the argument is similar to the above.
Given $t^* \in T_{\dom S}(\bar{x})$, by condition (i),  we can find accordingly $u'$ such that $(t^*, u') \in T_{\gph S}(\bar{x},\bar{u})$. By polar relation between tangent cones and regular normal cones, we have 
$$\langle (t^*, u'), (x^*, -u^*) \rangle  = \langle t^*, x^* \rangle \leq 0.$$
Given the convexity of $T_{\dom S}(\bar{x})$, the decomposition of $x^* = t^* + y^*$ is unique with $t^* = {\rm proj}_{T_{\dom S}(\bar{x})}(x^*)$ and $y^* = {\rm proj}_{N_{\dom S}(\bar{x})}(x^*)$, and $t^* \perp y^*$ by \cite[Exercise 12.22]{VaAn}. Therefore we have $t^* =0$ and $D^*_{\dom S} S(\bar{x}\mid \bar{u})(0) = \{0\}$. By \cref{thm-genMordcri} $S$ is Lipschitz-like relative to $\dom S$ at $\bar{x}$ for $\bar{u}$.
\end{proof}

Next we give an example where a fixed-point expression of the projectional coderi- vative can be given when the set $X$ is a smooth manifold around the point $\bar{x}$, (see \cite[Example 6.8]{VaAn} for the definition of smooth manifold).
\begin{lemma}[Projectional coderivatives of a set-valued mapping restricted on a smooth manifold]\label{Prop:SM-ProjCode}
Consider $S: \R^n\rightrightarrows  \R^m$ and  $\bar{u}\in S(\bar{x})$.   Suppose that $\gph S$ is locally closed at
 $(\bar{x}, \bar{u})$ and $X$ is a smooth manifold around $\bar{x}$. Then we have 
 \begin{equation*}
     D^*_X S(\bar{x}\mid\bar{u})(u^*)={\rm proj}_{T_X(\bar{x})}D^*S|_X(\bar{x}\mid\bar{u})(u^*), \quad \forall u^*.
 \end{equation*}
\end{lemma}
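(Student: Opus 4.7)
The plan is to establish the two inclusions separately. The easy inclusion $(\supseteq)$ is essentially by definition: take any $x^* \in D^*S|_X(\bar{x}\mid\bar{u})(u^*)$, so that $(x^*,-u^*)\in N_{\gph S|_X}(\bar{x},\bar{u})$. Then the single vector $(\proj_{T_X(\bar{x})}(x^*),-u^*)$ already lies in $\proj_{T_X(\bar{x})\times\R^m} N_{\gph S|_X}(\bar{x},\bar{u})$, hence trivially in the outer limit defining $D^*_X S(\bar{x}\mid\bar{u})$.

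The substantive content is the reverse inclusion $(\subseteq)$. Starting from $t^*\in D^*_X S(\bar{x}\mid\bar{u})(u^*)$, unfold the definition to obtain sequences $(x_k,u_k)\xrightarrow{\gph S|_X}(\bar{x},\bar{u})$ and $(x_k^*,-v_k^*)\in N_{\gph S|_X}(x_k,u_k)$ with $\proj_{T_X(x_k)}(x_k^*)\to t^*$ and $v_k^*\to u^*$. A standard diagonal extraction, using that the map $x\mapsto \proj_{T_X(x)}$ is continuous near $\bar{x}$ (since $X$ is a smooth manifold, so tangent spaces vary smoothly), reduces this to the case where $(x_k^*,-v_k^*)\in\widehat{N}_{\gph S|_X}(x_k,u_k)$.

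The central trick is then to replace $x_k^*$ by its tangential component. Since $X$ is a smooth manifold, $N_X(x_k)$ is a linear subspace and $\widehat{N}_X(x_k)=N_X(x_k)$. As $\gph S|_X\subseteq X\times\R^m$, we have the inclusion $N_X(x_k)\times\{0\}=\widehat{N}_{X\times\R^m}(x_k,u_k)\subseteq \widehat{N}_{\gph S|_X}(x_k,u_k)$. Because $\widehat{N}_{\gph S|_X}(x_k,u_k)$ is a convex cone containing the subspace $N_X(x_k)\times\{0\}$, we may subtract $(\proj_{N_X(x_k)}(x_k^*),0)$ from $(x_k^*,-v_k^*)$ while staying inside it, yielding
\[
\bigl(\proj_{T_X(x_k)}(x_k^*),\,-v_k^*\bigr)\in \widehat{N}_{\gph S|_X}(x_k,u_k).
\]
This new sequence is bounded and converges exactly to $(t^*,-u^*)$. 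Passing to the limit in the (limiting) normal cone yields $(t^*,-u^*)\in N_{\gph S|_X}(\bar{x},\bar{u})$, i.e.\ $t^*\in D^*S|_X(\bar{x}\mid\bar{u})(u^*)$. Finally, since each $\proj_{T_X(x_k)}(x_k^*)\in T_X(x_k)$ and $T_X(x_k)\to T_X(\bar{x})$ in the Painlev\'e--Kuratowski sense (again because $X$ is a smooth manifold), one concludes $t^*\in T_X(\bar{x})$, so $\proj_{T_X(\bar{x})}(t^*)=t^*$, and therefore $t^*\in \proj_{T_X(\bar{x})}D^*S|_X(\bar{x}\mid\bar{u})(u^*)$.

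The main obstacle is the step of passing from the projected normals in the outer limit back to a genuine element of $N_{\gph S|_X}(\bar{x},\bar{u})$, since a priori the normal vectors $x_k^*$ themselves need not be bounded. The smoothness of $X$ is used exactly to handle this: it makes $N_X(x_k)$ a subspace sitting inside $\widehat{N}_{\gph S|_X}(x_k,u_k)$, which legitimizes subtracting off the normal component of $x_k^*$ without leaving the regular normal cone, and it ensures continuity of the projection map $x\mapsto \proj_{T_X(x)}$ which is needed both for the diagonal extraction and for the closing observation $t^*\in T_X(\bar{x})$.
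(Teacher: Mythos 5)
Your proof is correct, and at the crucial step it takes a genuinely different (and more self-contained) route than the paper. The paper disposes of the inclusion $\subseteq$ by invoking continuity of $T_X(\cdot)$ on the manifold together with continuity of projections onto convergent sets, writing $y^*={\rm proj}_{T_X(\bar{x})}(x^*)$ for a limit $x^*$ of the normal components $x_k^*$; this tacitly presumes that the sequence $x_k^*$ converges (or at least is bounded), which is exactly the obstacle you flag. Your argument circumvents this: you observe that $\widehat{N}_{\gph S|_X}(x_k,u_k)$ is a convex cone containing the subspace $N_X(x_k)\times\{0\}=\widehat{N}_{X\times\R^m}(x_k,u_k)$ (by antitonicity of regular normal cones under set inclusion), so the possibly unbounded normal component of $x_k^*$ can be subtracted off while remaining a regular normal, and only the convergent tangential part $\bigl(\proj_{T_X(x_k)}(x_k^*),-v_k^*\bigr)$ need be passed to the limit. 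The diagonal reduction to regular normals is necessary for this (the limiting normal cone need not be convex) and you carry it out correctly, using the joint continuity of $(x,v)\mapsto\proj_{T_X(x)}(v)$ that the smooth-manifold hypothesis supplies; the same continuity gives $t^*\in T_X(\bar{x})$, so $t^*$ is its own projection and lands in ${\rm proj}_{T_X(\bar{x})}D^*S|_X(\bar{x}\mid\bar{u})(u^*)$. In short, both proofs use the smoothness of $X$ in the same two places (subspace structure of $N_X$, continuity of the tangent spaces), but yours replaces the paper's appeal to convergence of the full normals by a structural decomposition inside the regular normal cone, which buys a complete treatment of the unbounded case that the paper's one-line citation does not obviously cover.
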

\begin{proof} 
By \cref{def:ProjCode}, it suffices to show
   \begin{equation*}
       D^*_X S(\bar{x}\mid\bar{u})(u^*)\subseteq {\rm proj}_{T_X(\bar{x})}D^*S|_X(\bar{x}\mid\bar{u})(u^*), \quad \forall u^*.
   \end{equation*}
  Let $y^*\in D^*_X S(\bar{x}\mid\bar{u})(u^*)$. Then there exist some  sequences $(x_k,u_k)\xrightarrow[]{\gph S|_X}(\bar{x},\bar{u})$ and
  $x_k^*\in D^*S|_X(x_k\mid u_k)(u_k^*)$ such that $u_k^*\to u^*$ and $y_k^*:={\rm proj}_{T_X(x_k)}(x_k^*)\to y^*$. By representation of tangent cone of smooth manifold \cite[Example 6.8]{VaAn}, $T_X(\cdot)$ is continuous at $\bar{x}$ relative to $X$. Together with \cite[Exercise 5.35]{VaAn}, we have that $y^* = {\rm proj}_{T_X(\bar{x})}(x^*) $. This completes the proof.
\end{proof}

We now investigate upper estimates of projectional coderivative of the mapping $S$ defined in \cref{eqn:defofS-ParaSys}. Recall that upper estimates of coderivatives of $S$ have been given in \cite[Theorem 2.1]{LevyMord2004}. We also discuss the upper estimates under two special cases: (i) $G(w,x) =0$, (ii) $M(w,x) = M(x)$ and $X = \dom S$.
\begin{theorem}\label{Thm-ParametricSystem}
Consider the implicit mapping $S: \R^m \rightrightarrows \R^n$ of the form \cref{eqn:defofS-ParaSys} 
with $G: \R^{m+n} \rightarrow \R^d$ a $\mathcal{C}^1 $ mapping, and $M: \R^{m + n} \rightrightarrows \R^d$ a multifunction with  closed graph. Consider a pair $(\bar{w},\bar{x}) \in \gph S|_{W}$ where $W \subseteq \dom S$ is a closed set. Let $\mathcal{M}(w,x,y) :=  \nabla G(w,x) ^*y +D^*M|_{W\times \R^n}  \left( \left(w,x \right) \left| -G(w,x)\right.\right)(y) $. If the following basic constraint qualification holds: 
\begin{equation}\label{cq:para-sum-basic}
           (0,0)\in  \mathcal{M}(\bar{w},\bar{x},y)   \Longrightarrow y=0,
  \end{equation}
then we have
\begin{equation}\label{incl:ParaSys-sum-1}
  \begin{split}
  D^*_{W} S\left( \bar{w}  \mid \bar{x} \right) (r) \subseteq & \limsup_{\substack{ (w,x)\xrightarrow[]{\gph S|_W} (\bar{w},\bar{x}) \\ {  r'\rightarrow r }}} \bigcup_{y \in \R^d} \bigg\{  {\rm proj}_{T_{W}(w)} (v) \ \bigg| \   (v,-r') \in \mathcal{M}(w,x,y)   \bigg\}.
  \end{split}
\end{equation}
If the following strong constraint qualification is satisfied:
  \begin{equation}\label{cq:para-sum-senior}
     (0, 0)\in  \limsup_{\substack{ (w,x)\xrightarrow[]{\gph S|_W} (\bar{w},\bar{x}) \\ y'\rightarrow y}} {\rm proj}_{T_W(w)\times \R^n }   \mathcal{M}(w,x,y')   \Longrightarrow y=0,
  \end{equation}
then the limsup in \cref{incl:ParaSys-sum-1} can be equivalently put into the bracket as 
\begin{equation}\label{incl:ParaSys-sum-2}
  \begin{aligned}
  D^*_{W} S& \left( \bar{w}  \mid \bar{x} \right) (r) \subseteq \bigg\{ t \in \R^m \bigg|  \exists  y \in \R^d \text{ with } (t,-r) \in    \\
       &  \hskip1cm   \limsup_{ \substack{\tiny  (w ,x,-G(w,x))\xrightarrow[]{\gph M|_{W\times \R^n} }(\bar{w}, \bar{x},-G(\bar{w}, \bar{x})) \\ y' \rightarrow y} }  {\rm proj}_{T_{W}(w)\times \R^n}  \mathcal{M}(w,x,y')   \bigg\}.
  \end{aligned}
\end{equation}
If in addition,  $M|_{W\times \R^n}$ is graphically regular at $(\bar{w}, \bar{x}, -G(\bar{w},\bar{x}))$ and $W$ is a smooth manifold around $\bar{w}$, then
\begin{equation}\label{eqn:ParaSys-sum-eqn}
   D^*_{W} S\left( \bar{w}  \mid \bar{x} \right)(r) =   \bigg\{ t \in \R^m  \bigg|  \exists  y \in \R^d \text{ with } (t,-r) \in   {\rm proj}_{T_{W}(\bar{w})\times \R^n} \mathcal{M}(\bar{w},\bar{x}, y) \bigg\}.
\end{equation}
\end{theorem}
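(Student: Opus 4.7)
The plan is to adapt the preimage-rule approach of Levy--Mordukhovich \cite{LevyMord2004} to the restricted graph. Introduce the $\mathcal{C}^1$ map $F\colon(w,x)\mapsto(w,x,-G(w,x))$ so that $\gph S|_W = F^{-1}(\gph M|_{W \times \R^n})$. The Jacobian $\nabla F(w,x)$ is block-triangular with identity leading $(m+n)\times(m+n)$ block, and its adjoint sends $(\alpha,\beta,-y)$ to $(\alpha + \nabla_w G(w,x)^* y,\, \beta + \nabla_x G(w,x)^* y)$. Writing the standard preimage qualification for the pair $(F,\gph M|_{W\times \R^n})$ and unpacking via the definition of $D^* M|_{W\times \R^n}$ shows that it coincides exactly with \eqref{cq:para-sum-basic}.

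For \eqref{incl:ParaSys-sum-1}, I take $t^*\in D^*_W S(\bar w\mid \bar x)(r)$ and, by \cref{def:ProjCode}, extract sequences $(w_k,x_k)\xrightarrow[]{\gph S|_W}(\bar w,\bar x)$, $r'_k\to r$ and $(a_k,-r'_k)\in N_{\gph S|_W}(w_k,x_k)$ with $\proj_{T_W(w_k)}(a_k)\to t^*$. Applying the Mordukhovich preimage rule \cite[Theorem~6.14]{VaAn} at each $(w_k,x_k)$ (the CQ persists near $(\bar w,\bar x)$ by outer semicontinuity of the coderivative and continuity of $\nabla G$), I obtain multipliers $y_k$ with $(a_k,-r'_k)\in \mathcal M(w_k,x_k,y_k)$; projecting the first block onto $T_W(w_k)$ and passing to the outer limit yields \eqref{incl:ParaSys-sum-1}.

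For \eqref{incl:ParaSys-sum-2}, the task is to push the union over $y$ inside the outer limit. The strong CQ \eqref{cq:para-sum-senior} enables this via a homogeneity argument: both $\mathcal M(w,x,\cdot)$ (sum of a linear map in $y$ and the positively homogeneous coderivative) and $\proj_{T_W(w)}$ (since $T_W(w)$ is a cone) are positively homogeneous. Hence, if the multiplier sequence $y_k$ from the previous step were unbounded, normalising would produce some $\tilde y\in\Sph$ with $(0,0)\in\limsup_{(w,x)\xrightarrow[]{\gph S|_W}(\bar w,\bar x),\,y'\to\tilde y} \proj_{T_W(w)\times \R^n}\mathcal M(w,x,y')$, contradicting \eqref{cq:para-sum-senior}. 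Thus $\{y_k\}$ is bounded; along a convergent subsequence $y_k\to y$, and rewriting the base sequence as $(w_k,x_k,-G(w_k,x_k))\xrightarrow[]{\gph M|_{W\times \R^n}}(\bar w,\bar x,-G(\bar w,\bar x))$ via the continuity of $G$ delivers \eqref{incl:ParaSys-sum-2}.

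For the equality \eqref{eqn:ParaSys-sum-eqn}, the smooth-manifold assumption on $W$ permits invoking \cref{Prop:SM-ProjCode} to reduce matters to $D^*_W S(\bar w\mid \bar x)(r)=\proj_{T_W(\bar w)} D^* S|_W(\bar w\mid \bar x)(r)$. Graphical regularity of $M|_{W\times \R^n}$ at $F(\bar w,\bar x)$ then activates the regularity half of \cite[Theorem~6.14]{VaAn}, transferring regularity to $\gph S|_W$ at $(\bar w,\bar x)$ and yielding the preimage equality $N_{\gph S|_W}(\bar w,\bar x)=\nabla F(\bar w,\bar x)^* N_{\gph M|_{W\times \R^n}}(F(\bar w,\bar x))$. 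Hence $D^*S|_W(\bar w\mid \bar x)(r)=\{v:\exists y,\ (v,-r)\in\mathcal M(\bar w,\bar x,y)\}$, and projecting onto $T_W(\bar w)\times \R^n$ (which acts trivially on the $\R^n$ block) produces precisely the right-hand side of \eqref{eqn:ParaSys-sum-eqn}. The main obstacle I anticipate is the homogeneity--boundedness argument of part~(ii): controlling the unbounded-multiplier case is the core technical step and is exactly what forces \eqref{cq:para-sum-senior} to be the ``right'' strong CQ, while a secondary subtlety is justifying the persistence of \eqref{cq:para-sum-basic} near $(\bar w,\bar x)$ in part~(i), which I expect to handle by a standard upper-semicontinuity and normalisation argument.
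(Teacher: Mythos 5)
Your proposal is correct and follows essentially the same route as the paper: the paper invokes \cite[Theorem 2.1]{LevyMord2004} (which is exactly the preimage rule for $F(w,x)=(w,x,-G(w,x))$ that you unpack via \cite[Theorem 6.14]{VaAn}) to bound $N_{\gph S|_W}$, then projects and passes to the outer limit for \cref{incl:ParaSys-sum-1}, uses the same positive-homogeneity/normalisation contradiction with \cref{cq:para-sum-senior} to rule out unbounded multipliers for \cref{incl:ParaSys-sum-2}, and combines \cref{Prop:SM-ProjCode} with graphical regularity for the equality \cref{eqn:ParaSys-sum-eqn}. The only cosmetic difference is that you rederive the Levy--Mordukhovich estimate from the preimage rule rather than citing it, and you verify the boundedness of $\{y_k\}$ only for elements of $D^*_W S$ rather than for the whole right-hand side of \cref{incl:ParaSys-sum-1} as the paper does, which suffices for the stated inclusion.
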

\begin{proof}
By a simple statement of contradiction with the osc of $D^*M|_{W\times \R^n}$, \cref{cq:para-sum-basic} also indicates that
\begin{equation*}
  (0,0)\in  \mathcal{M}(w,x, y)  \Longrightarrow y=0
\end{equation*}
for any $(w,x) \in \gph S|_W$ sufficiently close to $(\bar{w}, \bar{x})$. According to \cite[Theorem 2.1]{LevyMord2004}, for any pair $(w,x) \in \gph S|_W$ sufficiently near $(\bar{w},\bar{x})$,  we have
\begin{equation}\label{incl:para-sum-nc1}
  N_{\gph S|_W} (w,x) \subseteq  \bigcup_{y\in \R^d} \mathcal{M}(w,x, y) .
\end{equation}
Therefore we have
\begin{align*}
     &  \limsup_{\tiny  (w ,x)\xrightarrow[]{\gph S|_{W} }(\bar{w}, \bar{x})} {\rm proj}_{T_W(w) \times \R^n } N_{\gph S|_W} (w,x) \nonumber\\
     & \hskip3cm \subseteq \limsup_{\tiny  (w ,x)\xrightarrow[]{\gph S|_{W} }(\bar{w}, \bar{x}) }  \bigcup_{y\in \R^d}  {\rm proj}_{T_W(w) \times \R^n } \mathcal{M}(w,x, y) 
\end{align*}
and accordingly the inclusion \cref{incl:ParaSys-sum-1} holds.

Now, assume that the strong constraint qualification \cref{cq:para-sum-senior} holds. 
By nature of projection and the outer limit, we can also see that \cref{cq:para-sum-senior} indicates \cref{cq:para-sum-basic}.
For $t$ belonging to the right-hand side of (\ref{incl:ParaSys-sum-1}), there exist sequences $(w_k ,x_k) \xrightarrow[]{\gph S|_W}(\bar{w},\bar{x})$, $y_k\in \R^d$ and $(v_k ,-r_k) \in \mathcal{M}(w_k,x_k, y_k)$,
such that $t_k \in {\rm proj}_{T_W(w_k)} (v_k) \rightarrow t$ and $r_k \rightarrow r$.
Taking a subsequence if necessary, we have either $y_k\rightarrow y\in \R^d$ or $\lambda_k y_k \rightarrow y\in \R^d$ with $\lambda_k \searrow 0$.
For the first case, we directly have that $t$ belongs to the right-hand side of \cref{incl:ParaSys-sum-2}.
For the second case, without loss of generality we assume $\|y \| =1$. With the conic structure we have
$ \lambda_k (v_k ,-r_k) \in \mathcal{M}(w_k,x_k, \lambda_k y_k) $ and accordingly $ \lambda_k t_k \in \lambda_k {\rm proj}_{T_W(w_k)} (v_k)  \rightarrow 0$,  $\lambda_k r_k \rightarrow 0$, which contradicts the constraint qualification \cref{cq:para-sum-senior} with $\|y \| =1$. Thus the second case is not possible. 
Given that $(w,x) \xrightarrow[]{\gph S|_W}(\bar{w}, \bar{x})$ is equivalent to $(w,x, -G(w,x)) \xrightarrow[]{\gph M|_{W\times \R^n}}(\bar{w}, \bar{x}, -G(\bar{w}, \bar{x}))$ and therefore we have that $t$ also belongs to the set on the right-hand side of \cref{incl:ParaSys-sum-2}. Note that in general the right-hand side of \cref{incl:ParaSys-sum-2} is included by that of \cref{incl:ParaSys-sum-1} and therefore, with constraint qualification \cref{cq:para-sum-senior} being satisfied, these two sets are identical. 

If furthermore $M|_{W\times \R^n}$ is graphically regular at $(\bar{w}, \bar{x}, -G(\bar{w},\bar{x}))$, again by \cite[Theorem 2.1]{LevyMord2004},  we have \cref{incl:para-sum-nc1} as an equation at the reference point $(\bar{w}, \bar{x})$ and therefore
\begin{equation}\label{ProjCode-Para-LowerBound}
\begin{aligned}
       &  \ {\rm proj}_{T_W(\bar{w})} D^*S|_W(\bar{w}\mid \bar{x}) (r)  \\
  =  & \ \bigg\{ t \in \R^m \bigg|  \exists  y \in \R^d \text{ with } (t,-r) \in  {\rm proj}_{T_{W}(\bar{w})\times \R^n} \mathcal{M}(\bar{w},\bar{x}, y) \big) \bigg\} \\
  \subseteq &\ D^*_W S(\bar{w} \mid \bar{x})(r) .
\end{aligned}
\end{equation}
Besides, when $W$ is a smooth manifold at $\bar{w}$, by \cref{Prop:SM-ProjCode} and \cref{incl:para-sum-nc1},
\begin{equation}\label{ProjCode-Para-UpperBound}
    \begin{aligned}
  & \ D^*_{W} S\left( \bar{w}  \mid \bar{x} \right) (r) = {\rm proj}_{T_W(\bar{w})} D^*S|_W \left(\bar{w} \mid \bar{x}\right) (r)  \\
  & \hskip1cm \subseteq \bigg\{ t \in \R^m \bigg|  \exists  y \in \R^d \text{ with } (t,-r) \in   {\rm proj}_{T_{W}(\bar{w})\times \R^n}  \mathcal{M}(\bar{w},\bar{x}, y)\bigg\}.
 \end{aligned}
\end{equation}

Combining the conditions that $M|_{W\times \R^n} $ is graphically regular at $(\bar{w}, \bar{x}, -G(\bar{w},\bar{x}))$ and that $W$ is a smooth manifold at $\bar{w}$,  \cref{ProjCode-Para-LowerBound} and \cref{ProjCode-Para-UpperBound} turn into equation \cref{eqn:ParaSys-sum-eqn}.
\end{proof}

Next we use a simple example to illustrate how the strong constraint qualification (\ref{cq:para-sum-senior}) can be applied in calculating the projectional coderivative \cref{eqn:ParaSys-sum-eqn}.
\begin{example}
For $S(w): =\left\{ x\in \R^n \mid   Ax + w\in K \right\} $ where $K \subseteq \R^m$ is a closed set. Let $G(w,x) = -Ax-w$ and $M(w,x) = K$.
For $W\subseteq \dom S$ we can write $\gph M|_{W\times \R^n} = W \times \R^n \times K$ and accordingly 
\begin{eqnarray*}
    D^*M|_{W\times \R^n} \left( (w,x)\mid u\right) (y) &=&\begin{cases}
                 N_{W}(w) \times \{0\} , & \text{ if } y\in -N_K(u),\\
                 \emptyset, & \text{ if } y\notin -N_K(u),
                \end{cases}\\
    \nabla G(w,x) ^* y &=& (-y, -A^*y).
\end{eqnarray*}
Let $n= m =2$, $K = \R\times \{0\} \cup \{0\}\times \R$, $A = \left(\begin{smallmatrix}0&0 \\0&1 \end{smallmatrix} \right)$. Then $\dom S = K + \mbox{range} A = \R^2$. Consider the particular pair $(\bar{w}, \bar{x} ) \in \gph S|_W$ where $\bar{w} = (0,1)^\top $, $\bar{x} = (0,0)^\top$ and a smooth manifold $W = \R\times \{1\} \subseteq \dom S$.
Then the constraint qualification \cref{cq:para-sum-senior}, by \cref{Prop:SM-ProjCode}, becomes
\begin{align*}
    (0, 0) \in & \ {\rm proj}_{T_W(\bar{w}) \times \R^n}\mathcal{M}(\bar{w},\bar{x}, y) \\
    = & \left\{ ({\rm \proj}_{T_W(\bar{w} )} (v-y), -A^*y ) \mid v\in N_W(\bar{w}) , y\in -N_K(A\bar{x} + \bar{w})\right\}  \Longrightarrow y =0, 
\end{align*}
As $-G(\bar{w},\bar{x}) =A\bar{x} + \bar{w} =  (0,1)^\top$, $K$ is regular at $-G(\bar{w}, \bar{x})$ and $N_K(-G(\bar{w}, \bar{x}) ) = N_K\left( (0,1)^\top \right) = \R\times \{0\}.$ Thus $M|_{W\times \R^n}$ is graphically regular at $\left( \bar{w}, \bar{x},-G(\bar{w}, \bar{x}) \right) $. 
Besides, in view of the facts that  $T_W(\bar{w}) = \R\times \{0\}$ and $y\in -N_K(A\bar{x}+ \bar{w}) = \R\times \{0\}$ and by the polar relation between $N_W(\bar{w})$ and $T_W(\bar{w})$,
$$0 = {\rm proj}_{T_W(\bar{w})}(v-y) = {\rm proj}_{T_W(\bar{w})} (-y) = -y \Longrightarrow y =0.$$ 
Thus the constraint qualification \cref{cq:para-sum-senior} is satisfied. 
Applying \cref{eqn:ParaSys-sum-eqn}, 
we obtain 
$$    D^*_W S(\bar{w}, \bar{x}) (r) =  \left\{ y \mid  y \in N_K(A\bar{w}+ \bar{x}) \text{ with } A^* y= r \right\} 
     =  \begin{cases}
        \R \times \{0\} & \text{ if } r =(0,0)^\top, \\ 
        \emptyset & \text{ if } r\neq (0,0)^\top.
     \end{cases}
$$
Thus, $D^*_W S(\bar{w}, \bar{x}) ((0,0)^\top) =   \R \times \{0\} \neq \{(0,0)^\top\}$. As $W$ is also a convex set, $S$ does not enjoy the Lipschitz-like property relative to $W$ at $\bar{w}$ for $\bar{x}$  according to the generalized Mordukhovich criterion (\cref{thm-genMordcri}).
\end{example}

By observing the right-hand side of the expression \cref{incl:ParaSys-sum-2}, we can see that $(t,-r)$ actually belongs to something that is very close to the projectional coderivative of the multifunction $G(w,x) + M(w,x)$ relative to the set $W\times \R^n$ at $(\bar{w}, \bar{x})$ for $0$.  Next we present a simpler model by taking $G(w,x) = 0$ so that the relation of projectional coderivatives between $S$ and $M$ can be revealed more clearly. 

\begin{corollary}\label{coro:ImpSys}
  For an implicit mapping $S : \R^m \rightrightarrows \R^n$ as
  $$ S(w) = \left\{ x \mid 0\in M(w,x) \right\}$$ where $M: \R^m \times \R^n \rightrightarrows \R^d$ has closed graph. Let a closed set $W\subseteq \R^m$ and let $\bar{x} \in S|_W (\bar{w})$. If the basic constraint qualification holds
  \begin{equation}\label{cq:para-sing-basic}
       (0,0) \in D^* M|_{W\times \R^n} \left( \left(\bar{w},\bar{x} \right)\mid 0 \right) (y) \Longrightarrow y=0,
  \end{equation}
  then 
  \begin{equation}\label{incl:ParaSys-sing-1}
  \begin{aligned}
     & D^*_{W} S\left( \bar{w}  \mid \bar{x} \right) (r) \subseteq \\
     & \limsup_{ \substack{ (w,x)\xrightarrow[]{\gph S|_W} (\bar{w},\bar{x}) \\ r'\rightarrow r}} \bigcup_{ y \in \R^d } \bigg\{  {\rm proj}_{T_{W}(w)} (v) \ \bigg|  (v,-r') \in   D^* M|_{W\times \R^n} \left( (w , x )\mid 0 \right) (y)  \bigg\}.
     \end{aligned}
\end{equation}
If the strong constraint qualification holds
\begin{equation}\label{cq:para-sing-senior}
   (0,0) \in D^*_{W\times \R^n} M\left( \left(\bar{w},\bar{x} \right)\mid 0 \right) (y) \Longrightarrow y=0
\end{equation}
  then we have
  \begin{equation}\label{incl:ParaSys-sing-2}
    D^*_W S(\bar{w}\mid \bar{x}) (r) \subseteq \left\{ \left. t \ \right| \  \exists y \text{ s.t. } (t, -r)\in D^*_{W\times \R^n}M ((\bar{w}, \bar{x}) \mid 0) (y)\right\}.
  \end{equation}
  When in addition $M|_{W\times \R^n}$ is graphically regular at $(\bar{w}, \bar{x},0)$ and $W$ is a smooth manifold at $\bar{w}$, the inclusions \cref{incl:ParaSys-sing-1} and \cref{incl:ParaSys-sing-2} are identical and become equations. 
\end{corollary}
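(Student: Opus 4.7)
The plan is to specialize \cref{Thm-ParametricSystem} to the case $G \equiv 0$ and verify that the resulting expressions match those in the corollary. With $G \equiv 0$ we have $\nabla G(w,x)^* y = 0$ and $-G(w,x) = 0$, so the auxiliary mapping appearing in \cref{Thm-ParametricSystem} collapses to $\mathcal{M}(w,x,y) = D^* M|_{W \times \R^n}((w,x) \mid 0)(y)$. Under this substitution, the basic qualification \cref{cq:para-sum-basic} becomes exactly \cref{cq:para-sing-basic}, and the right-hand side of \cref{incl:ParaSys-sum-1} becomes term-by-term that of \cref{incl:ParaSys-sing-1}, so the first assertion of the corollary follows directly from the theorem.

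For the strong qualification, the key observation is that $(w,x) \in \gph S|_W$ if and only if $((w,x), 0) \in \gph M|_{W \times \R^n}$, together with the identification $T_{W \times \R^n}(w,x) = T_W(w) \times \R^n$. Thus every sequence $(w_k, x_k) \xrightarrow[]{\gph S|_W} (\bar{w},\bar{x})$ with $y_k' \to y$ is a particular case (with $z_k \equiv 0$) of the sequences $((w_k,x_k), z_k) \to ((\bar{w},\bar{x}), 0)$ along $\gph M|_{W \times \R^n}$ appearing in the $\limsup$ that defines $D^*_{W \times \R^n} M((\bar{w},\bar{x}) \mid 0)(y)$. Consequently, the set on the left of \cref{cq:para-sum-senior} with $G \equiv 0$ is contained in $D^*_{W \times \R^n} M((\bar{w},\bar{x}) \mid 0)(y)$, so \cref{cq:para-sing-senior} implies the theorem's strong CQ, and the same inclusion shows that the right-hand side of \cref{incl:ParaSys-sum-2} sits inside that of \cref{incl:ParaSys-sing-2}. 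This delivers the second assertion.

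For the equality case, since $W$ is a smooth manifold around $\bar{w}$, so is $W \times \R^n$ around $(\bar{w},\bar{x})$, with tangent space $T_W(\bar{w}) \times \R^n$. Applying \cref{Prop:SM-ProjCode} to $M$ with relative set $W \times \R^n$ yields
\begin{equation*}
 D^*_{W \times \R^n} M((\bar{w},\bar{x}) \mid 0)(y) = \proj_{T_W(\bar{w}) \times \R^n} D^* M|_{W \times \R^n}((\bar{w},\bar{x}) \mid 0)(y),
\end{equation*}
which, together with the equality \cref{eqn:ParaSys-sum-eqn} of the theorem under $G \equiv 0$, converts \cref{incl:ParaSys-sing-2} into an equation. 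By outer semicontinuity at the reference point, the right-hand side of \cref{incl:ParaSys-sing-1} also collapses to the same set, so both inclusions coincide as equations.

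The main obstacle is the inclusion argument in the second paragraph: one has to keep track of two layers of $\limsup$ — the one hidden inside each limiting normal cone $N_{\gph M|_{W\times\R^n}}((w,x),0)$ used to define $D^* M|_{W \times \R^n}((w,x)\mid 0)$, and the outer one over $(w,x) \in \gph S|_W$ — and verify that together they fit inside the single $\limsup$ defining $D^*_{W\times\R^n} M$, where the value-variable is allowed to approach $0$ freely rather than being pinned at $0$.
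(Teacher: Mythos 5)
Your proposal is correct and follows essentially the same route as the paper: a direct specialization of \cref{Thm-ParametricSystem} with $G\equiv 0$, combined with the observation that sequences $(w,x)\xrightarrow[]{\gph S|_W}(\bar{w},\bar{x})$ correspond to sequences $((w,x),0)$ in $\gph M|_{W\times\R^n}$, so the relevant outer limit is contained in $D^*_{W\times\R^n}M((\bar{w},\bar{x})\mid 0)(y)$. The ``two layers of limsup'' you flag at the end is in fact harmless, since the limsup defining the projectional coderivative already ranges over the full limiting normal cones at nearby points, exactly as you use when taking $z_k\equiv 0$.
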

\begin{proof}
   This corollary comes from direct application of \cref{Thm-ParametricSystem} by taking $G(w,x) = 0 $. As $ (w,x)\xrightarrow[]{\gph S|_W} (\bar{w},\bar{x})$ is equivalent to $(w,x,0)\xrightarrow[]{\gph M|_{W\times \R^n}} (\bar{w},\bar{x},0)$, therefore
  $$ \limsup_{ \substack{ (w,x)\xrightarrow[]{\gph S|_W} (\bar{w},\bar{x}) \\ y'\rightarrow y}} \hskip -0.1cm  {\rm proj}_{T_W(w)\times \R^n }   D^*M|_{W\times \R^n}  \left( \left(w,x \right) \hskip -0.1cm \mid \hskip -0.1cm 0 \right) (y')  \subseteq D^*_{W\times \R^n} M\left( (\bar{w}, \bar{x}) \hskip -0.1cm \mid \hskip -0.1cm 0 \right) (y).$$
   Then we can rewrite the inclusions in \cref{Thm-ParametricSystem} as  \cref{incl:ParaSys-sing-1} and \cref{incl:ParaSys-sing-2} respectively. 
   \end{proof}
   

In \cref{Thm-ParametricSystem}, two different constraint qualifications are mentioned. We can see that the basic one \cref{cq:para-sum-basic} is ensuring the upper estimate in two ways: (i) restricting $S$ to $W$; (ii) expressing the normal cone of $\gph S$ via those of $\gph G$ and $\gph M$. The strong one, as shown in \cref{cq:para-sing-senior}, aims at presenting the projectional coderivative of $S$ via that of $M$ when $G$ vanishes.  In the next theorem, we give a setting where the basic constraint qualification \cref{cq:para-sum-basic} is automatically satisfied when we consider the largest possible $W$ as $\dom S$. 
\begin{theorem}\label{thm:ParaSys-ProjCde-domS}
  For $S$ defined in \cref{eqn:defofS-ParaSys}, if $M(w,x) = M(x)$ and $\nabla_w G(w,x)$ has full rank around $(\bar{w},\bar{x})\in \gph S$, then 
  \begin{equation}\label{eqn:ParaSys-domS-ProjCde}
  \begin{split}
     D^*_{\dom S} S\left( \bar{w}  \mid \bar{x} \right) (r) = & \limsup_{\substack{ (w,x)\xrightarrow[]{\gph S} (\bar{w},\bar{x}) \\ {  r'\rightarrow r }}}  \bigcup_{y \in \R^d} \bigg\{ {\rm proj}_{T_{dom S}(w)} (\nabla_w G(w,x) ^*y) \\ & \bigg| -r'\in \nabla_x G(w,x)^* y + D^* M\left(  x \mid -G(w,x) \right) (y)   \bigg\}.
  \end{split}
\end{equation}
\end{theorem}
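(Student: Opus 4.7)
The plan is to exploit the independence of $M$ from $w$ together with the full rank of $\nabla_w G$ to express $\gph S$ as the preimage of $\gph M$ under a $\mathcal{C}^1$ submersion, yielding an exact formula for $N_{\gph S}$ via the standard preimage rule---without invoking graphical regularity of $M$. This is essential because none of the equality statements in \cref{Thm-ParametricSystem} or \cref{coro:ImpSys} applies here: the constraint qualification \cref{cq:para-sum-basic} can genuinely fail with $W=\dom S$, even under the full-rank hypothesis.

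Since $M(w,x)=M(x)$, one has $\gph S=\phi^{-1}(\gph M)$, where $\phi(w,x):=(x,-G(w,x))$ is a $\mathcal{C}^1$ map from $\R^{m+n}$ into $\R^{n+d}$. Its Jacobian has the block form
\begin{equation*}
\nabla\phi(w,x)=\begin{pmatrix} 0 & I_n \\ -\nabla_w G(w,x) & -\nabla_x G(w,x) \end{pmatrix},
\end{equation*}
whose row rank equals $n+\mathrm{rank}(\nabla_w G(w,x))=n+d$ in a neighborhood of $(\bar w,\bar x)$, by the full-rank hypothesis and the continuity of $\nabla G$. Hence $\phi$ is a $\mathcal{C}^1$ submersion on that neighborhood, and the standard preimage rule for the limiting normal cone (see e.g.\ \cite{VaAn,mordukhovich2006variational}) delivers
\begin{equation*}
N_{\gph S}(w,x)=\nabla\phi(w,x)^{*}\,N_{\gph M}\bigl(x,-G(w,x)\bigr)
\end{equation*}
for every $(w,x)\in\gph S$ sufficiently close to $(\bar w,\bar x)$.

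A direct transpose computation yields $\nabla\phi(w,x)^{*}(r,-y)=\bigl(\nabla_w G(w,x)^{*}y,\ r+\nabla_x G(w,x)^{*}y\bigr)$, while by definition $(r,-y)\in N_{\gph M}(x,-G(w,x))$ is equivalent to $r\in D^{*}M(x\mid-G(w,x))(y)$. I would substitute this explicit parameterization of $N_{\gph S}$ into the limsup defining $D^{*}_{\dom S}S(\bar w\mid\bar x)$, noting that $W=\dom S$ entails $\gph S|_{\dom S}=\gph S$, so no truncation occurs. The projection onto $T_{\dom S}(w)\times\R^n$ then factors as ${\rm proj}_{T_{\dom S}(w)}$ on the first coordinate and the identity on the second; relabeling the $x$-component of the selected normal element as $-r'$, so that $r'\to r$ in the limsup matches the argument of the projectional coderivative, converts the limsup exactly into the right-hand side of \cref{eqn:ParaSys-domS-ProjCde}. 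Because the preimage rule furnishes an equality, both inclusions in \cref{eqn:ParaSys-domS-ProjCde} are obtained at once.

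The main obstacle is precisely to secure equality (rather than a mere inclusion) in the normal cone preimage formula without any regularity hypothesis on $\gph M$. This is where the full-rank assumption on $\nabla_w G$ is indispensable: it upgrades $\phi$ to a $\mathcal{C}^1$ submersion, which is the classical setting in which the preimage formula for the Mordukhovich normal cone holds unconditionally. The remaining steps---the adjoint computation, the factorization of the projection, and the outer-limit bookkeeping---are routine once this equality is in hand.
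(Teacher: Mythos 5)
Your proof is correct and follows essentially the same route as the paper: both hinge on the fact that, when $M$ is independent of $w$ and $\nabla_w G$ is surjective, the limiting normal cone to $\gph S$ is given \emph{exactly} by $\bigcup_{y}\bigl(\nabla G(w,x)^*y+\{0\}\times D^*M(x\mid -G(w,x))(y)\bigr)$ near $(\bar w,\bar x)$, after which one just unfolds the definition of $D^*_{\dom S}S$ with $\gph S|_{\dom S}=\gph S$. The only difference is that the paper obtains this exact normal-cone formula by citing condition (b) of \cite[Theorem~2.1]{LevyMord2004}, whereas you rederive it directly from the change-of-coordinates/preimage rule for the submersion $\phi(w,x)=(x,-G(w,x))$ --- which is precisely the mechanism underlying that cited result.
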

\begin{proof}
 When the set $W:= \dom S$, $S|_W = S$. By condition (b) in \cite[Theorem 2.1]{LevyMord2004}, we have for any $(w,x) \in \gph S$,
 $$N_{\gph S}(w,x) = \bigcup_{y\in \R^d} \left(  \nabla G(w,x)^* y + \{0\} \times D^*M\left(x\mid -G(w,x) \right)(y) \right) . $$
Therefore we have
\begin{align}
     &  \limsup_{\tiny  (w ,x)\xrightarrow[]{\gph S|_{W} }(\bar{w}, \bar{x})} {\rm proj}_{T_W(w) \times \R^n } N_{\gph S|_W} (w,x)  \nonumber\\
     = &  \limsup_{\tiny  (w ,x)\xrightarrow[]{\gph S }(\bar{w}, \bar{x})} {\rm proj}_{T_{\dom S}(w) \times \R^n } N_{\gph S} (w,x) \nonumber\\
     =  & \limsup_{\tiny  (w ,x)\xrightarrow[]{\gph S }(\bar{w}, \bar{x}) }  \bigcup_{y\in \R^d}  {\rm proj}_{T_{\dom S}(w) \times \R^n } \left(  \nabla G(w,x)^* y +  \left\{0 \right\} \times D^*M (x\mid -G(w,x) )(y) \right) \nonumber
\end{align}
and thus the equality \cref{eqn:ParaSys-domS-ProjCde}. 
\end{proof}


\section{Affine variational inequalities}\label{sect:AVI}
In this section, we consider the following affine variational inequality (AVI):
$$0 \in q+Mx + N_C(x)$$ where $C \subset \R^n$ is a polyhedral set and $M$ is an $n\times n$ matrix. Here we consider that $q$ is a parameter. The solution mapping of AVI is written as:
\begin{equation}\label{eqn:AVI}
  S(q) = \left\{x  \mid  0 \in q+Mx + N_C(x) \right\}.
\end{equation}

Note that for AVI, $\gph S$ is always a union of finitely many polyhedral sets as it is a linear transformation of $\gph N_C$ (see \cite[Example 12.31]{VaAn} and \cite{dontchev1996}). For a closed subset $Q \subseteq \dom S$, the graph of the multifunction $S$ restricted on $Q$ is
\begin{equation}\label{eqn:AVI-gphofLQ}
  \gph S|_Q = \gph S \cap \left( Q \times \R^n \right) = \left\{ (q,x) \in Q \times \R^n \mid 0 \in q+Mx + N_C(x) \right\}.
\end{equation}
We first obtain a upper estimate of the projectional coderivative of $S$, when $Q$ is a union of polyhedral sets. In this way, we can skip the limsup appearing in e.g. (\ref{incl:ParaSys-sum-1}) and express the upper estimate of $D^*_{Q}S$ in the form of a union within the range of a ball centered at $(\bar{q},\bar{x})$. 
\begin{proposition}\label{prop:AVI-ProjCde-Expression}
  For the solution mapping $S$ \cref{eqn:AVI} of AVI, consider a union of polyhedral sets $Q\subseteq \dom S$. Let $(\bar{q}, \bar{x}) \in \gph S|_Q$. If the following basic constraint qualification holds: 
     \begin{equation}\label{cq:AVI-ProjCde}
     u^* \in N_Q(\bar{q}), \  (M^* u^*, u^*) \in N_{\gph N_C}\left(\bar{x}, -M\bar{x}-\bar{q} \right) \Longrightarrow u^* =0,
   \end{equation}
   then 
   \begin{equation}\label{eqn:AVI-ProjCode}
          \begin{aligned}
     D^*_{Q} S( \bar{q} \mid \bar{x}) (y^* ) \subseteq
             &  \bigcup_{(q,x) \in \gph S|_Q \cap \B_{\varepsilon}(\bar{q}, \bar{x})} \bigg\{ \left.{\rm proj}_{T_Q(q)} (-u^* + w^*) \ \right|  \ w^*\in N_Q(q) ,  \\[-0.1cm]
     & \hskip1cm  \exists \  u^* \text{ s.t. } (M^*u^* -y^*, u^*)\in N_{\gph N_C}(x, -Mx-q) \bigg\}
   \end{aligned}
   \end{equation}
  for sufficiently small $\varepsilon >0$. If one of the following conditions is satisfied: 
  \begin{enumerate}[label=\normalfont(\alph*)]
      \item $\gph N_C$ and $Q$ are regular around $(\bar{x}, -M\bar{x} - \bar{q})$ and $\bar{q}$ respectively, 
      \item $Q = \dom S$, (in this case the constraint qualification \cref{cq:AVI-ProjCde} can be avoided),
  \end{enumerate}
then the inclusion \cref{eqn:AVI-ProjCode} becomes an equality. 
\end{proposition}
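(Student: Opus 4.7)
The plan is to cast the AVI into the framework of \cref{Thm-ParametricSystem} by setting $G(q,x) := q + Mx$ and the multifunction $M(q,x) := N_C(x)$ (independent of $q$), then exploit polyhedrality of $C$ and $Q$ to remove the outer limits in \cref{incl:ParaSys-sum-1}. First I would compute the ingredients: $\nabla G(q,x)^* y = (y, M^* y)$, and since $N_C$ does not depend on $q$ the restricted graph factors as $\gph M|_{Q\times\R^n} = Q \times \gph N_C$, so its normal cone factors as $N_Q(q) \times N_{\gph N_C}(x, -Mx - q)$. Writing the basic CQ \cref{cq:para-sum-basic} explicitly and performing the substitution $u^* = -y$ matches it to \cref{cq:AVI-ProjCde}, and $\mathcal{M}(q,x,y)$ unfolds into pairs $(-u^* + w^*, M^* u^* - r')$ with $w^* \in N_Q(q)$ and $(M^* u^* - r', u^*) \in N_{\gph N_C}(x, -Mx - q)$, producing the expression on the right-hand side of \cref{eqn:AVI-ProjCode}.

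Next I would eliminate the two outer limits in \cref{incl:ParaSys-sum-1} using polyhedrality. Because $C$ is polyhedral, $\gph N_C$ is a finite union of polyhedral convex cones, and $Q$ is a finite union of polyhedra, so $\gph S|_Q$ is itself a finite union of polyhedra. Consequently $(q,x) \mapsto N_{\gph S|_Q}(q,x)$ and $q \mapsto T_Q(q)$ take only finitely many values on a sufficiently small neighborhood of $(\bar{q},\bar{x})$, so the limsup over $(q,x)$ reduces to a union over $\gph S|_Q \cap \B_\varepsilon(\bar{q},\bar{x})$ for any small $\varepsilon > 0$. The limsup over $r'$ is then redundant, since at any fixed $(q,x)$ the constraints $w^* \in N_Q(q)$ and $(M^* u^* - r', u^*) \in N_{\gph N_C}(x, -Mx - q)$ are closed in $r'$, so taking $r' = y^*$ suffices.

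For the equality in case (b), I would invoke \cref{thm:ParaSys-ProjCde-domS}: here $\nabla_q G(q,x) = I$ has full rank trivially, so the formula holds without any CQ, and the same polyhedral reduction turns the right-hand side of \cref{eqn:ParaSys-domS-ProjCde} into \cref{eqn:AVI-ProjCode} with $Q = \dom S$. In case (a), regularity of $\gph N_C$ at $(\bar{x}, -M\bar{x} - \bar{q})$ together with regularity of $Q$ around $\bar{q}$ gives regularity of the product $Q \times \gph N_C$ (product regularity is preserved), so the inclusion \cref{incl:para-sum-nc1} used in the proof of \cref{Thm-ParametricSystem} becomes an equation at every $(q,x) \in \gph S|_Q$ near $(\bar{q},\bar{x})$. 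To obtain the reverse inclusion of \cref{eqn:AVI-ProjCode}, for any admissible element arising at some $(q,x) \in \gph S|_Q \cap \B_\varepsilon(\bar{q},\bar{x})$, I would construct an approximating sequence $(q_k, x_k) \to (\bar{q},\bar{x})$ staying on a single polyhedral piece containing both $(q,x)$ and $(\bar{q},\bar{x})$, so that the relevant normal cone data and the tangent cone $T_Q(q_k)$ stabilize and the limsup in \cref{def:ProjCode} realizes the candidate element.

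The main obstacle I anticipate is this reverse-inclusion step in case (a): showing that ${\rm proj}_{T_Q(q)}(-u^* + w^*)$ computed at a point $(q,x)$ away from $(\bar{q}, \bar{x})$ is genuinely attained as a limit of projections along a sequence approaching $(\bar{q},\bar{x})$. This requires selecting the sequence so as to remain in the relative interior of a common polyhedral face where $T_Q$ is locally constant, and then invoking continuity of the projection onto a fixed polyhedral cone (cf.\ \cite[Exercise 5.35]{VaAn}). With the polyhedrality of $Q$ and of $\gph N_C$ this should be a routine but careful stratification argument.
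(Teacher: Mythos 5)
Your proposal is correct and follows essentially the same route as the paper: specialize \cref{Thm-ParametricSystem} to $G(q,x)=q+Mx$ with the restricted normal-cone multifunction whose graph is the product $Q\times\gph N_C$, match the constraint qualification via the substitution $u^*=-y$, collapse the outer limits using the polyhedrality of $\gph S|_Q$ and $Q$, and invoke \cref{thm:ParaSys-ProjCde-domS} for case (b). The only cosmetic difference is in case (a), where the paper obtains the equality directly from the exact sum rule $N_{\gph S|_Q}(q,x)=N_Q(q)\times\{0\}+N_{\gph S}(q,x)$ (\cite[Theorem 6.14]{VaAn}) together with the already-established identification of the limsup with the union over a small ball, rather than via your explicit sequence/stratification argument -- which rests on the same polyhedral finiteness fact.
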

\begin{proof}
For any $q, x \in \R^n$, let 
\[\Gamma|_{Q\times \R^n}(q,x):= N_C(x),   \ \mbox{if } q \in Q; \ \ \emptyset,  \ \mbox{if } q\notin Q.\]
For any $q\in Q, x \in \R^n \mbox{ and } v \in N_C(x)$, we have
$$D^*\Gamma|_{Q\times \R^n}\left( (q, x) \mid v\right) = N_Q(q) \times D^*{N_C}(x\mid v).$$ 
Then the mapping $S$ \cref{eqn:AVI} is rewritten as
\begin{equation} \label{aviS2}
S|_Q(q) = \{ x \mid 0 \in q + Mx + \Gamma|_{Q\times \R^n}(q,x)\}.
\end{equation}

Note that the constraint qualification (\ref{cq:para-sum-basic}) becomes
\begin{equation*}
    \begin{aligned}
    (0,0) = (u^*, M^* u^*)  + (w^*,v^*) \text{ with } w^* \in N_Q(\bar{q}), \ & v^* \in D^*N_C(\bar{x}\mid  -M\bar{x} - \bar{q})(u^*) \\
    & \Longrightarrow u^* =0,
    \end{aligned}
\end{equation*}
which is equivalent to 
$$-u^* \in N_Q(\bar{q}),\  -M^* u^* \in D^*N_C(\bar{x} \mid -M\bar{x} - \bar{q}) (u^*) \Longrightarrow u^* =0. $$
By tuning the direction of $u^*$, we arrive at (\ref{cq:AVI-ProjCde}). 

And the upper estimate (\ref{incl:ParaSys-sum-1}) can be put as 
\begin{equation*}
    \begin{aligned}
     D^*_{Q} S\left( \bar{q}  \mid \bar{x} \right) (y^*) \subseteq &  \limsup_{\substack{ (q,x)\xrightarrow[]{\gph S|_Q} (\bar{q},\bar{x}) \\  y'^*\rightarrow y^* }} \bigcup_{u^* \in \R^n} \bigg\{ \left. {\rm proj}_{T_Q(q)} (t^*) \ \right| \ ( t^*, -y'^*) \in (u^*,  M^* u^*) +  \\
     & \hskip4cm  \   N_Q(q) \times D^*N_C(x \mid -Mx-q) (u^*) \bigg\} \\
     = &  \limsup_{\substack{ (q,x)\xrightarrow[]{\gph S|_Q} (\bar{q},\bar{x}) \\  y'^*\rightarrow y^* }} \bigcup_{u^* \in \R^n} \bigg\{ \left. {\rm proj}_{T_Q(q)} (u^* + w^*) \ \right| \ y'^* = - M^* u^* - v^*, \\ 
     &   \hskip3cm w^*\in N_Q(q), v^*\in D^*N_C(x \mid -Mx-q) (u^*) \bigg\} \\
     = &  \bigcup_{(q,x) \in \gph S|_Q \cap \B_{\varepsilon}(\bar{q}, \bar{x})}  \bigcup_{u^* \in \R^n}\bigg\{ \left.{\rm proj}_{T_Q(q)} (u^* + w^*)  \right|  y^* = - M^* u^* - v^*, \\
     & \hskip3cm w^*\in N_Q(q), v^*\in D^*N_C(x \mid -Mx-q) (u^*) \bigg\}
\end{aligned}
\end{equation*}
for sufficiently small $\varepsilon >0$. Here the second equality comes from the polyhedrality of both $\gph S|_Q$ and $Q$ as only finitely many possible statuses are considered.  By adjusting the expression of $y^*$ and direction of $u^*$, we finally arrive at \cref{eqn:AVI-ProjCode}. Under condition (a), we have $N_{\gph S|_Q}(q,x)  = N_{Q}(q) \times \{0\} + N_{\gph S}(q,x)$ for $(q,x)$ near $(\bar{q}, \bar{x})$ by \cite[Theorem 6.14]{VaAn}. Accordingly the inclusion in \cref{eqn:AVI-ProjCode} becomes an equality. Under condition (b), the equality in \cref{eqn:AVI-ProjCode} is derived as an application of \cref{thm:ParaSys-ProjCde-domS}.
\end{proof}

Given the upper estimate \cref{eqn:AVI-ProjCode}, we can give a sufficient condition for the Lipschitz-like property of $S$ relative to $Q$ when $Q$ is further convex, that is $Q$ is a polyhedral set. Moreover, based on the critical face condition introduced in \cite{dontchev1996}, the sufficient condition can be simplified concerning the given point only. As such we derive a `generalized critical face condition'. To do this, let us review two notations introduced in \cite{dontchev1996}. For a polyhedral set $C \subset \R^n$ and $(x,v) \in \gph N_C$, the critical cone $K(x,v)$ is defined by
$$K(x,v)= T_C (x) \cap [v]^\perp.$$
Let $\mathcal{F} (K)$ be the collection of all the closed faces of polyhedral cone $K$ in the form of $F = K \cap [v^*] ^\perp, \text{ with }  v^* \in K^*$. It is clear that such $F$ is also a polyhedral cone.

The following lemma plays an important role in the development of critical face condition in \cite{dontchev1996}, which will be useful here as well.

\begin{lemma}[{\cite[Reduction Lemma]{dontchev1996}}] \label{lm:ReductionLemma}
  For any $(x,v) \in \gph N_C$, there is a neighborhood $U$ of $(0,0)$ in $ \R^n \times \R^n$ such that, for $(x',v') \in U$, one has
  \begin{equation*}
     v + v' \in N_C (x + x') \Longleftrightarrow v' \in N_{K(x,v)} (x').
  \end{equation*}
  In particular, $T_{\gph N_C} (x,v) = \gph N_{K(x,v)}$.
\end{lemma}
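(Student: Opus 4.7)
The plan is to exploit the polyhedrality of $C$ through two successive local reductions. \textbf{First}, since $C$ is polyhedral, the active constraints at $x$ are the only ones that matter nearby, so there is a neighborhood $U_0$ of $0$ with $C \cap (x + U_0) = (x + T_C(x)) \cap (x + U_0)$. Writing $K := T_C(x)$, this immediately gives $N_C(x + x') = N_K(x')$ for $x' \in U_0$ and translates $v \in N_C(x)$ into $v \in N_K(0) = K^*$. The task reduces to establishing, on some neighborhood of $(0,0)$,
\[
v + v' \in N_K(x') \Longleftrightarrow v' \in N_{K \cap v^\perp}(x').
\]

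\textbf{Second}, I would exploit the face decomposition
\[
\gph N_K = \bigcup_{F} F \times F^\diamond, \qquad F^\diamond := K^* \cap F^\perp,
\]
where $F$ runs over the finitely many faces of $K$. The pieces $F \times F^\diamond$ that touch any neighborhood of $(0, v)$ are exactly those with $v \in F^\diamond$, equivalently $F \subseteq v^\perp$. Because $v \in K^*$, the set $K_0 := K \cap v^\perp$ is itself a face of $K$, and the faces of $K$ contained in $v^\perp$ coincide with the faces of $K_0$. Every other piece satisfies $d(v, F^\diamond) > 0$ and can be discarded by shrinking the neighborhood, a uniform choice being possible because there are only finitely many faces.

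\textbf{Third}, on each surviving piece I would check that $(F \times F^\diamond) - (0, v)$ agrees locally with $F \times F^\star$, where $F^\star := K_0^* \cap F^\perp$ is the conjugate face of $F$ inside $K_0$. Two polyhedral identities drive this: the polar-of-intersection rule gives $K_0^* = K^* + \R v$, which together with $v \in F^\perp$ yields $F^\star = F^\diamond + \R v$; and for any polyhedral cone $P$ with $v \in P$ one has $T_P(v) = P + \R v$, so the polyhedral reduction of $F^\diamond$ at $v$ yields $F^\diamond - v = F^\star$ near $0$. Combining these over the relevant faces produces, on a common small neighborhood,
\[
\gph N_K - (0, v) = \bigcup_{F \text{ face of } K_0} F \times F^\star = \gph N_{K_0},
\]
which is the required equivalence; the tangent-cone statement $T_{\gph N_C}(x, v) = \gph N_{K(x,v)}$ drops out at once because the right-hand side is already a cone.

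The step I expect to be most delicate is choosing a \emph{single} neighborhood that simultaneously excludes every piece of $\gph N_K$ not passing through $(0, v)$ and validates the polyhedral reductions on each relevant $F^\diamond$ at $v$. This uniform choice hinges essentially on the finiteness of the face lattice of $K$. The two polyhedral identities $K_0^* = K^* + \R v$ and $T_P(v) = P + \R v$ would fail for general closed convex cones, so polyhedrality of $C$ is used fundamentally throughout.
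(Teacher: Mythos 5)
The paper does not prove this lemma at all: it is quoted verbatim from \cite[Reduction Lemma]{dontchev1996}, so there is no in-paper argument to compare against. Your blind proof is, as far as I can check, correct and complete in all essential steps: the local identification of $C$ with $x+T_C(x)$, the face decomposition $\gph N_K=\bigcup_F F\times F^\diamond$ with $F^\diamond=K^*\cap F^\perp$, the observation that only faces $F\subseteq [v]^\perp$ (equivalently, the faces of $K_0=K\cap[v]^\perp$) survive near $(0,v)$, and the two polyhedral identities $K_0^*=K^*+[v]$ and $T_P(v)=P+[v]$ that convert each surviving piece $F\times(F^\diamond-v)$ into $F\times F^\star$ locally. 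The uniformity issue you flag is real but is handled exactly as you say, by finiteness of the face lattice. Your route differs from the original proof in \cite{dontchev1996}, which works directly with a linear-inequality representation $C=\{x\mid\langle a_i,x\rangle\le b_i\}$ and tracks which constraints can be active near $x$ and which generators of $N_C$ can carry positive multipliers near $v$; that computation is more elementary but representation-dependent, whereas your face-lattice argument is intrinsic and dovetails with the formula \cref{eqn:AVI-faceexpGphNc} and the directional-normal-cone description of \cite{GfrOut2016} that this paper uses downstream. One presentational point: when you invoke $N_K(x')=F^\diamond$ you should fix $F$ as the minimal face containing $x'$ (so $x'\in\rint F$); for $x'$ merely in $F$ one only gets $F^\diamond\subseteq N_K(x')$, which is still enough for the inclusion $\gph N_{K_0}\subseteq \gph N_K-(0,v)$ but should be said explicitly.
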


With Reduction Lemma, the local geometry of $\gph N_C$ around $(\bar{x}, \bar{v})$ can be observed via that of $\gph N_{K(\bar{x}, \bar{v})}$ and allows us to express $N_{\gph N_C}(x,v)$ via the faces of the critical cone $K(x,v)$. 
From the proof of \cite[Theorem 2]{dontchev1996}, for any pair $(x, v)  \in \gph N_C$, we have
\begin{equation}\label{eqn:AVI-faceexpGphNc}
  N_{\gph N_C}(x, v) = \left\{ \left( F_1 -F_2 \right)^*  \times  \left( F_1 -F_2 \right) \mid  F_2 \subset  F_1 \in \mathcal{F} (K(x, v)) \right\}.
\end{equation}

In \cite{GfrOut2016}, the directional limiting normal cone of $\gph N_C$ is expressed with critical faces. In light of this expression, we have the following result. 
\begin{lemma} \label{lm:EquivforDirNCandNC}
For a given pair $(x,v) \in \gph N_C$ and  $(x',v') \in T_{\gph N_C}(x,v)$ sufficiently near to $(0,0)$, we have
\begin{equation}\label{NCexpNBH}
    \begin{aligned}
        N_{\gph N_C} (x+x',v+v') =  \big\{ \left( F_1 -F_2 \right)^*  \times  \left( F_1 -F_2 \right) \mid  & \ x'\in F_2 \subset  F_1 \subset [v']^\perp,\\
        & \quad F_1, F_2 \in \mathcal{F} (K(x, v)) \big\} .
    \end{aligned}
\end{equation}
\end{lemma}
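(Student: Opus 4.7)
The plan is to reduce the computation of $N_{\gph N_C}$ at the nearby point $(x+x', v+v')$ to a computation about $\gph N_{K(x,v)}$ at $(x',v')$ via the Reduction Lemma, then apply the face expression \cref{eqn:AVI-faceexpGphNc} to the polyhedral cone $K(x,v)$, and finish with a polyhedral-combinatorics identification of faces.

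First I would invoke \cref{lm:ReductionLemma} at the reference pair $(x, v)$: for $(x',v')$ sufficiently near $(0,0)$, the equivalence $v + v' \in N_C(x + x') \iff v' \in N_{K(x,v)}(x')$ shows that on a neighborhood of $(0,0)$ the translate $\gph N_C - (x, v)$ coincides with $\gph N_{K(x,v)}$. Since the normal cone at a point is a purely local invariant, this yields
\begin{equation*}
    N_{\gph N_C}(x + x', v + v') = N_{\gph N_{K(x,v)}}(x', v').
\end{equation*}
Second, $K(x,v)$ is itself a polyhedral convex cone, so the face expression \cref{eqn:AVI-faceexpGphNc} applies at the graph point $(x',v') \in \gph N_{K(x,v)}$. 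Writing $K' := T_{K(x,v)}(x') \cap [v']^\perp$ for the critical cone of $K(x,v)$ at $(x',v')$, this gives
\begin{equation*}
    N_{\gph N_{K(x,v)}}(x', v') = \bigl\{(G_1 - G_2)^* \times (G_1 - G_2) \mid G_2 \subset G_1 \in \mathcal{F}(K')\bigr\}.
\end{equation*}

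The final step is to identify $\mathcal{F}(K')$ with those faces of $K(x,v)$ that contain $x'$ and lie in $[v']^\perp$, in an inclusion- and difference-preserving manner. The underlying polyhedral fact is that for a polyhedral cone $K$ and $x' \in K$, faces of the tangent cone $T_K(x')$ are in inclusion-preserving bijection with faces of $K$ containing $x'$ (read off from the active inequalities), and the subsequent intersection with the supporting hyperplane $[v']^\perp$ selects those corresponding to faces of $K$ lying in $[v']^\perp$. Under this bijection the cone differences $G_1 - G_2$ match with $F_1 - F_2$, so substituting into the expression from step two produces the asserted formula. The main obstacle will be precisely this combinatorial bookkeeping: one must verify carefully that the face bijection preserves the inclusion $G_2 \subset G_1$ and the equality of cone differences $G_1 - G_2 = F_1 - F_2$, and that the side condition $x' \in F_2$ in the statement is exactly what emerges from requiring $G_2$ to be a face of $T_{K(x,v)}(x') \cap [v']^\perp$ rather than merely a face of the ambient cone $K(x,v)$.
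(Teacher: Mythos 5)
Your proposal is correct, but it follows a genuinely different route from the paper's. The paper never passes through $\gph N_{K(x,v)}$: it rewrites $N_{\gph N_C}(x+x',v+v')$, for $(x',v')\in T_{\gph N_C}(x,v)$ small, as an outer limit of regular normal cones at points $(x,v)+t(\tilde{x},\tilde{v})$ with $t\searrow 0$ and $(\tilde{x},\tilde{v})\to(x',v')$, i.e.\ as the directional limiting normal cone $N_{\gph N_C}\left((x,v);(x',v')\right)$ of \cref{eqn:defofDirNC}, and then simply quotes the critical-face formula for that directional object from \cite[Theorem 2.12]{GfrOut2016}. You instead stay entirely within the Dontchev--Rockafellar framework: \cref{lm:ReductionLemma} gives $N_{\gph N_C}(x+x',v+v')=N_{\gph N_{K(x,v)}}(x',v')$ (using also that $(x',v')\in T_{\gph N_C}(x,v)=\gph N_{K(x,v)}$, so the face formula is applied at a genuine graph point), then \cref{eqn:AVI-faceexpGphNc} applied to the polyhedral cone $K(x,v)$ expresses this normal cone through faces of the critical cone $K'=T_{K(x,v)}(x')\cap[v']^\perp$, and a face correspondence converts these into faces of $K(x,v)$ itself. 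In effect you re-prove the special case of \cite[Theorem 2.12]{GfrOut2016} that the paper cites, so your argument is more self-contained but carries the combinatorial burden the paper outsources. The bookkeeping you flag does close: for $F\in\mathcal{F}(K(x,v))$ with $x'\in F\subset[v']^\perp$ the corresponding face of $K'$ is $T_F(x')=F+[x']$; the map $F\mapsto F+[x']$ is an inclusion-preserving bijection between such faces and $\mathcal{F}(K')$, with inverse $G\mapsto G\cap K(x,v)$ (here one uses $\langle v',x'\rangle=0$, which holds because $v'\in N_{K(x,v)}(x')$ and $K(x,v)$ is a cone); and since $x'\in F_2\subset F_1$ implies $[x']\subset F_1-F_2$ while $F_1-F_2$ is a convex cone, one gets $(F_1+[x'])-(F_2+[x'])=F_1-F_2$, so the differences, and hence their polars, coincide and the asserted formula follows.
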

\begin{proof}
As $(x',v') \in T_{\gph N_C}(x,v)$ is sufficiently near to $(0,0)$, we have
  \begin{equation*}
    \begin{aligned}
    N_{\gph N_C} (x+x',v+v') = & \limsup_{(x'', v'')\xrightarrow[]{} (x',v') } \widehat{N}_{\gph N_C} \left((x,v) + (x'',v'') \right)  \\[-0.1cm] 
     =  & \limsup_{\substack{ t\searrow 0 \\ (\tilde{x}, \tilde{v}) \rightarrow(x',v')   }} \widehat{N}_{\gph N_C} \left((x,v) + t(\tilde{x}, \tilde{v}) \right) \\
     = & \ N_{\gph N_C}\left( (x, v); ( x',v')\right).
\end{aligned}
\end{equation*}
The last equality comes from the definition of directional limiting normal cone \cref{eqn:defofDirNC}. Then \cref{NCexpNBH} is obtained by \cite[Theorem 2.12]{GfrOut2016}.
\end{proof}

In the following theorem, we present the `generalized critical face condition' as an inclusion, which reduces to the one in \cite{dontchev1996} when $\bar{q} \in \inte  Q$.
\begin{theorem}\label{thm:genCritFace}
  For $(\bar{q}, \bar{x})\in \gph S|_Q$ and $Q$ being a polyhedral set, suppose the following constraint qualification holds:
    \begin{equation}\label{eqn:AVI-CQface}
   N_Q(\bar{q})\cap (F_1 -F_2 ) \cap (M(F_1-F_2))^*=\{0\},
  \end{equation}
where $F_1, F_2 \in \mathcal{F}(K(\bar{x}, \bar{v}))$ are closed faces with $F_2 \subset F_1$ and $\bar{v}= -M\bar{x} -\bar{q}$. If, for any such $F_1, F_2$, the following inclusion is satisfied:
    \begin{equation}\label{eqn:AVI-SufCondface}
     (F_1 -F_2 ) \cap (M(F_1-F_2))^* \subseteq -N_Q(\bar{q}),
  \end{equation}
then $S$ has Lipschitz-like property relative to $Q $ at $\bar{q}$ for $\bar{x}$. The condition \cref{eqn:AVI-SufCondface} becomes necessary if either $\gph N_C$ is regular around $(\bar{x},\bar{v})$ or $Q = \dom S$, (in the latter case the constraint qualification \cref{eqn:AVI-CQface} can be avoided.)
\end{theorem}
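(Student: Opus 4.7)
The approach is to reduce sufficiency of the relative Lipschitz-like property to the condition $D^*_Q S(\bar{q}\mid\bar{x})(0)=\{0\}$ via the generalized Mordukhovich criterion \cref{thm-genMordcri}, and then estimate this set using \cref{eqn:AVI-ProjCode} of \cref{prop:AVI-ProjCde-Expression} together with the face representation \cref{eqn:AVI-faceexpGphNc} and \cref{lm:EquivforDirNCandNC}. First I would verify that the constraint qualification \cref{cq:AVI-ProjCde} of \cref{prop:AVI-ProjCde-Expression} is implied by \cref{eqn:AVI-CQface}: any $u^*$ with $u^*\in N_Q(\bar{q})$ and $(M^*u^*,u^*)\in N_{\gph N_C}(\bar{x},\bar{v})$ must, by \cref{eqn:AVI-faceexpGphNc}, lie in $N_Q(\bar{q})\cap(F_1-F_2)\cap(M(F_1-F_2))^*$ for some $F_2\subseteq F_1$ in $\mathcal{F}(K(\bar{x},\bar{v}))$, and hence must vanish.

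For any $t^*\in D^*_Q S(\bar{q}\mid\bar{x})(0)$, combining the limsup form of the projectional coderivative with \cref{eqn:AVI-ProjCode} yields sequences $(q_k,x_k)\to(\bar{q},\bar{x})$ in $\gph S|_Q$, along with $w^*_k\in N_Q(q_k)$ and $u^*_k\in\R^n$ satisfying $(M^*u^*_k,u^*_k)\in N_{\gph N_C}(x_k,-Mx_k-q_k)$ and $t^*_k:=\proj_{T_Q(q_k)}(-u^*_k+w^*_k)\to t^*$. By polyhedrality the faces in \cref{lm:EquivforDirNCandNC} stabilize along a further subsequence to fixed $F_2\subseteq F_1\in\mathcal{F}(K(\bar{x},\bar{v}))$ with $u^*_k\in F_1-F_2$ and $M^*u^*_k\in(F_1-F_2)^*$, i.e.\ $u^*_k\in(F_1-F_2)\cap(M(F_1-F_2))^*$ for every $k$. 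Hypothesis \cref{eqn:AVI-SufCondface} then gives $-u^*_k\in N_Q(\bar{q})$; combined with $w^*_k\in N_Q(q_k)\subseteq N_Q(\bar{q})$ (another consequence of polyhedrality), one obtains $-u^*_k+w^*_k\in N_Q(\bar{q})=T_Q(\bar{q})^*$.

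The concluding step, which I expect to be the main obstacle, is to exploit the polyhedral combinatorics of $\gph S|_Q$ to deduce $t^*_k=0$. Concretely, the stabilization along the subsequence places $(q_k,x_k)$ on a fixed polyhedral piece of $\gph S|_Q$, and aligning this piece with the face of $Q$ whose relative interior contains $q_k$ should show that $\proj_{T_Q(q_k)}(-u^*_k+w^*_k)$ coincides with $\proj_{T_Q(\bar{q})}(-u^*_k+w^*_k)$, which vanishes by the Moreau decomposition since $-u^*_k+w^*_k\in N_Q(\bar{q})$. Passing to the limit gives $t^*=0$, and \cref{thm-genMordcri} yields the Lipschitz-like property relative to $Q$. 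For necessity under either (a) graphical regularity of $\gph N_C$ around $(\bar{x},\bar{v})$ or (b) $Q=\dom S$, \cref{eqn:AVI-ProjCode} holds with equality; a violation of \cref{eqn:AVI-SufCondface} then produces $u^*\in(F_1-F_2)\cap(M(F_1-F_2))^*$ outside $-N_Q(\bar{q})$, whence $\proj_{T_Q(\bar{q})}(-u^*)\in D^*_Q S(\bar{q}\mid\bar{x})(0)$ is nonzero, contradicting \cref{thm-genMordcri}; in case (b) the constraint qualification \cref{eqn:AVI-CQface} is bypassed via \cref{thm:ParaSys-ProjCde-domS}.
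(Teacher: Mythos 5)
Your overall strategy (reduce to $D^*_QS(\bar{q}\mid\bar{x})(0)=\{0\}$ via \cref{thm-genMordcri}, verify \cref{cq:AVI-ProjCde} from \cref{eqn:AVI-CQface} through \cref{eqn:AVI-faceexpGphNc}, and run the necessity direction off the exactness of \cref{eqn:AVI-ProjCode}) matches the paper, but the step you yourself flag as ``the main obstacle'' is resolved incorrectly. You argue that since $-u^*_k+w^*_k\in N_Q(\bar{q})$, aligning the polyhedral piece with the face of $Q$ containing $q_k$ in its relative interior should give $\proj_{T_Q(q_k)}(-u^*_k+w^*_k)=\proj_{T_Q(\bar{q})}(-u^*_k+w^*_k)=0$. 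This is false in general: for $q_k\neq\bar{q}$ one has $T_Q(\bar{q})\subsetneq T_Q(q_k)$ and $N_Q(q_k)=N_Q(\bar{q})\cap[q_k-\bar{q}]^\perp\subsetneq N_Q(\bar{q})$, and a vector in $N_Q(\bar{q})$ can have a nonzero projection onto the larger cone $T_Q(q_k)$. (Take $Q=\R_+$, $\bar{q}=0$, $q_k>0$, $z=-1\in N_Q(\bar{q})$: then $\proj_{T_Q(q_k)}(z)=-1\neq 0=\proj_{T_Q(\bar{q})}(z)$.) What you actually need is membership in the \emph{smaller} cone $N_Q(q_k)$, i.e.\ the extra orthogonality $\langle u^*_k,\,q_k-\bar{q}\rangle=0$, and nothing in your argument supplies it.

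The paper closes exactly this gap using the directional face constraints of \cref{lm:EquivforDirNCandNC}: writing $q'=q_k-\bar{q}$, $x'=x_k-\bar{x}$, $v'=-Mx'-q'$, the admissible faces satisfy $x'\in F_2\subset F_1\subset[v']^\perp$, hence $[x']\subset F_1-F_2\subset[v']^\perp$ and, by polarity, $(F_1-F_2)^*\subset[x']^\perp$. From $M^*u^*_k\in(F_1-F_2)^*$ one gets $\langle u^*_k,Mx'\rangle=0$, and from $u^*_k\in F_1-F_2\subset[v']^\perp$ one gets $\langle u^*_k,Mx'+q'\rangle=0$; subtracting yields $\langle u^*_k,q'\rangle=0$. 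Combined with \cref{eqn:AVI-SufCondface} this places $-u^*_k$ in $N_Q(\bar{q})\cap[q']^\perp=N_Q(q_k)$, and since $w^*_k\in N_Q(q_k)$ already, the sum lies in the convex cone $N_Q(q_k)$ and its projection onto $T_Q(q_k)$ vanishes by the Moreau decomposition. Without this orthogonality argument (which is precisely why the condition at the single point $\bar{q}$ suffices for all neighboring points), your proof does not go through; the remainder of your proposal, including the necessity part, is consistent with the paper's.
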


\begin{proof}
Noting that $\gph S|_Q$ is a union of finitely many polyhedral sets, for any $(q,x) \in \gph S|_Q$ sufficiently close to $(\bar{q},\bar{x})$, $(q',x') := (q-\bar{q}, x- \bar{x}) \in T_{\gph S|_Q}(\bar{q}, \bar{x})$ is sufficiently close to $(0,0)$. By \cref{eqn:AVI-gphofLQ} and \cite[Exercise 6.7, Theorem 6.42]{VaAn}, we have
\begin{equation}\label{eq:AVI-cq-CritFace-1}
  \begin{split}
    T_{\gph S|_Q}(\bar{q},\bar{x})  &\subseteq  \left( T_Q(\bar{q}) \times \R^n \right) \cap T_{\gph S} (\bar{q},\bar{x})  \\
     &= \left\{ (\tilde{q},\tilde{x}) \mid \tilde{q}\in T_Q(\bar{q}), \ (\tilde{x},-M\tilde{x}-\tilde{q}) \in T_{\gph N_C}(\bar{x},\bar{v}) \right\}. 
   \end{split}
\end{equation}
Therefore $q'\in T_Q(\bar{q})$ and $(x' , -Mx' -q') \in T_{\gph N_C}(\bar{x}, \bar{v})$.
By polyhedrality of $Q$, it holds that
\begin{equation}\label{eq:AVI-cq-CritFace0}
N_Q(\bar{q}+ q')  = N_Q(\bar{q}) \cap [q']^\perp.
\end{equation}
As $(\bar{q}, \bar{x})\in \gph S|_Q$, we have $(\bar{x}, -M\bar{x} - \bar{q}) \in \gph N_C$. Thus by \cref{eqn:AVI-faceexpGphNc}, we can derive the constraint qualification \cref{cq:AVI-ProjCde} as
\begin{equation}\label{eq:AVI-cq-CritFace}
    u^*\in  N_Q(\bar{q}), \   (M^* u^*, u^*)\in \left( F_1 -F_2 \right)^*  \times  \left( F_1 -F_2 \right) \Longrightarrow u^*=0,
\end{equation}
for some $F_1, F_2 \in \mathcal{F} (K(\bar{x},\bar{v}))$ with $F_2 \subset  F_1$. 

By using the upper estimate in \cref{prop:AVI-ProjCde-Expression} and the generalized Mordukhovich criterion in \cref{thm-genMordcri}, it would be sufficient to examine the Lipschitz-like property of $S$ relative to $Q$ at $(\bar{q}, \bar{x})$ by checking
\begin{equation}\label{eq:AVI-SufCondNBH1}
(M^* u^*, u^*) \in N_{\gph N_C}(x, -Mx-q), \ w^* \in N_Q(q) \Longrightarrow {\rm proj}_{T_Q(q)} (-u^* + w^*) =0,
\end{equation}
where $(q,x) \in \gph S|_Q$ is sufficiently close to $(\bar{q}, \bar{x})$.
By \cite[Exercise 12.22]{VaAn} and the polar relation between $T_Q(q)$ and $N_Q(q)$, the following equivalence holds,  
$${\rm proj}_{T_Q(q)} (-u^* + w^*) =0 \Longleftrightarrow -u^* + w^* \in N_Q(q), \ \mbox{ for any } w^* \in N_Q(q).$$ 
Noting $q = \bar{q} + q'$, it follows from \cref{eq:AVI-cq-CritFace0} and the convexity and conic structure of $N_Q(q)$ that 
 the above equivalence reduces to,
\begin{equation}\label{eq:AVI-cq-CritFace-1A}
 {\rm proj}_{T_Q(q)} (-u^* + w^*) =0, \mbox{ for any } w^* \in N_Q(q) \Longleftrightarrow  -u^* \in N_Q(q)  =N_Q(\bar{q}) \cap [q' ]^\perp.
\end{equation}
As $(q',x') \in T_{\gph S|_Q}(\bar{q}, \bar{x})$, by \cref{eq:AVI-cq-CritFace-1}, we have $(x', -Mx' - q') \in T_{\gph N_C}(x,v)$. Note $\bar{v}=-M\bar{x}-\bar{q}$. Let $v' = -Mx' - q'$. By \cref{lm:EquivforDirNCandNC}, we have
\begin{equation}\label{eq:AVI-cq-CritFace-1B}
    \begin{aligned}
        N_{\gph N_C}(x,-Mx-q) = \big\{ \left( F_1 -F_2 \right)^*  \times \left( F_1 -F_2 \right)  \mid \ &  x'\in F_2 \subset  F_1 \subset [v']^\perp, \\ 
        & F_1, F_2 \in \mathcal{F} (K(\bar{x},\bar{v})) \big\},
    \end{aligned}
\end{equation}  
where $(q,x) \in \gph S|_Q$ is sufficiently close to $(\bar{q}, \bar{x})$. Then by \cref{eq:AVI-SufCondNBH1}, \cref{eq:AVI-cq-CritFace-1A} and \cref{eq:AVI-cq-CritFace-1B}, a sufficient condition can be derived as
\begin{equation}\label{eq:sufcond-GenCritFaceNBH}
     \forall (M^* u^*, u^*)\in \left( F_1 - F_2 \right)^*  \times  \left(  F_1 - F_2 \right) \Longrightarrow u^* \in -N_Q(\bar{q})\cap [q']^\perp,
\end{equation} 
 for all closed faces $F_1,   F_2 \in \mathcal{F}(K(\bar{x}, \bar{v})) $ with $x'\in   F_2 \subset   F_1 \subset [-Mx'-q']^\perp$. 

 It remains to show that 
  the sufficient condition \cref{eq:sufcond-GenCritFaceNBH} can be equivalently replaced by the following condition:
     \begin{equation}\label{eq:sufcond-GenCritFace}
     \forall (M^* u^*, u^*)\in \left( F_1 -F_2 \right)^*  \times  \left( F_1 -F_2 \right) \Longrightarrow u^* \in -N_Q(\bar{q})
  \end{equation}
  for all closed faces $F_1, F_2 \in \mathcal{F} (K(\bar{x},\bar{v}))$ with $F_2 \subset  F_1$. 
  It is obvious that \cref{eq:sufcond-GenCritFaceNBH} implies \cref{eq:sufcond-GenCritFace} by taking $x'=  q' = 0$. Now we prove that \cref{eq:sufcond-GenCritFace} implies \cref{eq:sufcond-GenCritFaceNBH}. 
  For $F_1, F_2 \in \mathcal{F}(K(\bar{x}, \bar{v}))$ with $x'\in   F_2 \subset   F_1 \subset [-Mx'-q']^\perp$, we have
  $$ tx' \in F_2 \subset F_1 ,\  \forall t \geq 0$$ by the conic structure of $F_2$ and $F_1$. Thus we have
  $$[x'] \subset F_1 -F_2 \subset [-Mx'-q']^\perp. $$
  Note that $\left([-Mx'-q']^\perp\right)^* = [-Mx'-q'] $ and $ [x']^* = [x' ]^\perp$.
  Besides, $F_1- F_2$ is still a convex polyhedral cone and by the polar relation, we also have
  $$ [-Mx'-q'] \subset \left(F_1 -F_2 \right)^* \subset [x']^\perp. $$
 Therefore the following inclusions hold:
  $$(M^*u^*, u^*) \in  \left(F_1 -F_2 \right)^* \times \left(F_1 -F_2 \right)  \subset [x']^\perp \times [-Mx'-q']^\perp.$$
  From $M^* u^* \in (F_1 -F_2 )^* \subset [x']^\perp$, we have $ \langle  u^* , M x' \rangle =\langle M^* u^*  , x' \rangle = 0$. From $u^*\in F_1 -F_2 \subset [-Mx'-q']^\perp$, we have $\langle u^* , -q' \rangle =\langle u^* , -Mx'-q' \rangle =0$. Thus, $u^*\in [q']^\perp$ holds. Therefore \cref{eq:sufcond-GenCritFaceNBH} holds when  \cref{eq:sufcond-GenCritFace} is satisfied. 
  
  For any possible combinations of $F_1, F_2\in \mathcal{F} (K(\bar{x},\bar{v}))$ with $F_2 \subset F_1$, $F_1$ and $F_2$ are closed and convex cones and so is $F_1-F_2$. By \cite[Corollary 16.3.2]{CvxAn}, $M^* u^* \in (F_1-F_2)^* $ is equivalent to $ u^* \in (M(F_1-F_2))^*.$
  Then \cref{eqn:AVI-CQface} and \cref{eqn:AVI-SufCondface} can be derived from \cref{eq:AVI-cq-CritFace} and \cref{eq:sufcond-GenCritFace} respectively. 
  
Besides, by \cref{prop:AVI-ProjCde-Expression} and the convexity of $Q$, the upper estimate in \cref{eqn:AVI-ProjCode} becomes exact when either $\gph N_C$ is regular around $(\bar{x},\bar{v})$ or $Q = \dom S$. The inclusion \cref{eq:AVI-cq-CritFace-1} also becomes an equality when $\gph N_C$ is regular at $(\bar{x}, \bar{v})$. Then in these two cases, the sufficient condition becomes necessary. 
\end{proof}

Next we show that the Lipschitz-like property of a polyhedral multifunction relative to a convex set is equivalent to its relative inner semicontinuity. Before that, we give the definition and an alternative description of relative inner semicontinuity according to the hit-and-miss criteria. Here we adopt the version based on \cite{dontchev1996}, in which the inner semicontinuity is called as `lower semicontinuity'.

\begin{definition}[Inner semicontinuity relative to a set]\label{def:isc-relative}
Let $Q \subseteq \R^n$.
A set-valued mapping $S: \R^n \rightrightarrows \R^m$ is inner semicontinuous (isc) at $(\bar{q},\bar{x})\in \gph S|_Q$ relative to $Q$ if
  \begin{equation*}
     \bar{x} \in \liminf_{\tiny q\xrightarrow[]{Q}\bar{q}}S(q).
  \end{equation*}
  We say $S$ is isc around $(\bar{q},\bar{x})$ relative to $Q$ if there exists $V\times W \in \mathcal{N}(\bar{q})\times \mathcal{N}(\bar{x}) $ such that $S$ is isc at every $(q,x)\in \gph S|_Q \cap  (V\times W)$. 
\end{definition}
\begin{proposition}\label{prop:isc-equiv}
 $S$ is isc around $(\bar{q}, \bar{x})$ relative to $Q$ if and only if there exists $V\times W \in \mathcal{N}(\bar{q})\times \mathcal{N}(\bar{x}) $ such that for every open set $O$ with $S(q) \cap W \cap O \neq \emptyset$ where $q\in Q \cap V$, there exists $V'\in \mathcal{N}(q)$, such that $S(q') \cap O \neq \emptyset$ for all $q'\in Q\cap V'$.
\end{proposition}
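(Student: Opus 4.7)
The plan is to rewrite both conditions in terms of the classical ``hit-and-miss'' formulation of the inner limit, and then verify that the two hit-and-miss conditions match. The main obstacle is purely bookkeeping: one has to be careful to use \emph{open} neighborhoods and to quantify ``for all $q$ near $\bar q$'' versus ``for a specific $q$ near $\bar q$'' consistently.

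\textbf{Step 1 (pointwise reformulation).} I will first record the standard fact that, for $(q,x) \in \gph S|_Q$,
\[
x \in \liminf_{q' \xrightarrow[]{Q} q} S(q')
\;\Longleftrightarrow\;
\forall \text{ open }O \ni x,\ \exists V'\in\mathcal N(q):\ S(q')\cap O\ne\emptyset,\ \forall q'\in Q\cap V'.
\]
This comes directly from the sequential definition of the inner limit by the usual neighborhood/sequence translation. Consequently, \Cref{def:isc-relative} says that $S$ is isc around $(\bar q,\bar x)$ relative to $Q$ iff there exist $V\in\mathcal N(\bar q)$, $W\in\mathcal N(\bar x)$ such that the displayed hit-and-miss property holds at \emph{every} $(q,x)\in \gph S|_Q\cap(V\times W)$.

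\textbf{Step 2 ($\Rightarrow$).} Assume $S$ is isc around $(\bar q,\bar x)$ relative to $Q$, with witnessing neighborhoods $V\times W$ (which, without loss of generality, I take to be open). Fix $q\in Q\cap V$ and an open set $O$ with $S(q)\cap W\cap O\ne\emptyset$; pick any $x$ in this intersection. Then $(q,x)\in\gph S|_Q\cap(V\times W)$, so by hypothesis $x\in\liminf_{q'\xrightarrow[]{Q}q}S(q')$. The open set $W\cap O$ contains $x$, so by the pointwise reformulation of Step~1 there exists $V'\in\mathcal N(q)$ with $S(q')\cap(W\cap O)\ne\emptyset$ for every $q'\in Q\cap V'$. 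In particular $S(q')\cap O\ne\emptyset$ on $Q\cap V'$, which is the required conclusion.

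\textbf{Step 3 ($\Leftarrow$).} Conversely, assume the hit-and-miss property holds with some open neighborhoods $V\times W$ of $(\bar q,\bar x)$. I will show that $S$ is isc at every $(q,x)\in\gph S|_Q\cap(V\times W)$, which yields the desired ``around'' property with the same $V\times W$. Fix such a pair $(q,x)$ and an arbitrary open neighborhood $O$ of $x$. Since $W$ is open and $x\in W$, the set $O\cap W$ is an open neighborhood of $x$, and $x\in S(q)\cap W\cap(O\cap W)$, so $S(q)\cap W\cap(O\cap W)\ne\emptyset$. The hypothesis (applied to the open set $O\cap W$) yields $V'\in\mathcal N(q)$ with $S(q')\cap(O\cap W)\ne\emptyset$, hence $S(q')\cap O\ne\emptyset$, for all $q'\in Q\cap V'$. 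By the pointwise reformulation of Step~1 this means $x\in\liminf_{q'\xrightarrow[]{Q}q}S(q')$, i.e.\ $S$ is isc at $(q,x)$ relative to $Q$. Combining the two directions completes the proof.
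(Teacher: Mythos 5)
Your proof is correct and follows essentially the same route as the paper's: the paper reformulates the "isc around" condition as the inclusion $S(q)\cap W\subset\liminf_{q'\xrightarrow[]{Q}q}S(q')$ for all $q\in Q\cap V$ and then simply cites the hit-and-miss criterion of \cite[Theorem 4.5]{VaAn}, whereas you unfold that criterion explicitly in your Steps 1--3. The extra detail (e.g.\ passing through $O\cap W$ in Step 3, which could be replaced by applying the hypothesis to $O$ directly) is harmless and does not change the argument.
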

\begin{proof}
 The multifunction $S$ being isc at every $(q,x)\in \gph S\cap (V\times W)$ relative to $Q$ is equivalent to that for every $x\in S(q) \cap W$ with $q\in Q\cap V$, $S$ is isc at $(q,x)$ relative to $Q$. That is $$S(q) \cap W \subset \liminf_{q'\xrightarrow[]{Q}q} S(q').$$
By the hit-and-miss criteria (\cite[Theorem 4.5]{VaAn}) we derive the statement.
\end{proof}

For a polyhedral multifunction, i.e., whose graph is a union of polyhedral sets, we know that it enjoys calmness at any point in its domain with the same $\kappa$ (see \cite[Proposition 1]{Robinson1981}). The equivalence between relative isc and relative Lipschitz continuity were given in \cite[Corollary 2.1]{robinson2007solution}. Here we present these equivalences around a given point. Note that this proof is very similar to that of \cite[Theorem 1.5]{robinson2007solution}. However, one cannot simply apply the result in \cite[Theorem 1.5]{robinson2007solution} as they adopted isc in a version without mentioning $\bar{x}$. This is different from our setting as for Lipschitz-like property, $\bar{x}$ is involved. 
For self-completeness, we include a proof here.
\begin{theorem}
Let $S:\R^n \rightrightarrows \R^m$ be a polyhedral multifunction and $\kappa$ be the calmness modulus of $S$ on $Q$, where $Q$ is a convex subset of $\dom S$. Consider $(\bar{q}, \bar{x}) \in \gph S|_Q$. Then for the following statements:
\begin{enumerate}[label=\normalfont(\roman*)]
    \item $S$ is isc around $(\bar{q}, \bar{x})$ relative to $Q$;
    \item $S$ is Lipschitz-like relative to $Q$ at $\bar{q}$ for $\bar{x}$ with modulus $\kappa$;
    \item $S$ is locally Lipschitz continuous relative to $Q$ around $(\bar{q}, \bar{x})$ with modulus $\kappa$;
\end{enumerate}
we have {\normalfont(i) $\Longleftrightarrow$ (ii) $\Longleftarrow$ (iii)}. 
\end{theorem}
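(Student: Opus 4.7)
The plan is to prove the three directions $(\text{iii})\Rightarrow(\text{ii})$, $(\text{ii})\Rightarrow(\text{i})$, and $(\text{i})\Rightarrow(\text{ii})$ separately, with the last being the main technical step. The first two are essentially direct from the definitions in \cref{def-lip-like} and \cref{def:isc-relative}. For $(\text{iii})\Rightarrow(\text{ii})$, local relative Lipschitz continuity with modulus $\kappa$ supplies neighborhoods $V,W$ such that $S(q')\cap W\subseteq(S(q)\cap W)+\kappa\|q'-q\|\B$ for all $q,q'\in Q\cap V$; dropping the intersection with $W$ on the right-hand side only enlarges the set, yielding the Lipschitz-like inclusion \cref{Ap-def} with the same $\kappa$. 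For $(\text{ii})\Rightarrow(\text{i})$, given the neighborhoods $V,W$ from the Lipschitz-like property, take any $(q,x)\in\gph S|_Q\cap(V\times W)$ and any sequence $q_k\to q$ with $q_k\in Q$; eventually $q_k\in Q\cap V$, so $x\in S(q)\cap W\subseteq S(q_k)+\kappa\|q_k-q\|\B$, and selecting $x_k\in S(q_k)$ realizing the distance bound gives $x_k\to x$, hence $x\in\liminf_{q'\xrightarrow[]{Q}q}S(q')$. This argument applies uniformly to all points in $\gph S|_Q\cap(V\times W)$, so $S$ is isc around $(\bar q,\bar x)$ relative to $Q$.

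The main work is $(\text{i})\Rightarrow(\text{ii})$. The plan is a continuation-along-the-segment argument, exploiting the convexity of $Q$ and the uniform calmness of a polyhedral multifunction from \cite[Proposition 1]{Robinson1981}. Choose neighborhoods $V,W$ on which $S$ is isc at every point of $\gph S|_Q\cap(V\times W)$. Given $q,q'\in Q\cap V$ (possibly shrunk further) and $x\in S(q')\cap W$, the segment $q_t:=(1-t)q'+tq$, $t\in[0,1]$, lies in $Q$ by convexity. Starting from the base point $(q',x)$, uniform calmness gives $d(x,S(q_t))\leq\kappa\|q_t-q'\|=\kappa t\|q-q'\|$ for $t$ in some initial interval $[0,t_1]$; pick $x_{t_1}\in S(q_{t_1})$ realizing this bound (such $x_{t_1}$ exists by isc), and observe that $(q_{t_1},x_{t_1})$ stays in $V\times W$ when $t_1$ is small. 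Repeating with $(q_{t_1},x_{t_1})$ as the new base point and iterating, a finite-cover argument on $[0,1]$ combined with the triangle inequality delivers $d(x,S(q))\leq\kappa\|q-q'\|$, which is exactly the Lipschitz-like inclusion \cref{Ap-def}.

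The main obstacle is ensuring that the continuation scheme terminates in finitely many steps with a single uniform modulus $\kappa$ and that every successive base point $(q_{t_i},x_{t_i})$ stays inside the neighborhood where both isc and the calmness modulus $\kappa$ are available. The polyhedral structure is essential on both counts: \cite[Proposition 1]{Robinson1981} provides a $\kappa$ independent of the base point, and the finite decomposition $\gph S=\bigcup_{i=1}^N P_i$ forces a uniform lower bound on the ``reach'' of calmness at each base point, so that $[0,1]$ cannot require an infinite sub-cover. The cumulative displacement $\|x_{t_i}-x\|\leq\kappa t_i\|q-q'\|$ is controlled by shrinking $V\times W$, which guarantees that every intermediate $(q_{t_i},x_{t_i})$ remains where isc applies and the fresh base point can be chosen. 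This follows the template of \cite[Theorem 1.5]{robinson2007solution}, adapted to the ``around a point'' localization appearing in our version of (i) and (ii).
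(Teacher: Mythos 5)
Your implications (iii) $\Rightarrow$ (ii) and (ii) $\Rightarrow$ (i) are fine and agree in substance with the paper's argument. The gap is in (i) $\Rightarrow$ (ii). First, the opening estimate $d(x,S(q_t))\leq \kappa\|q_t-q'\|$ for $x\in S(q')$ is not what calmness provides: calmness at $q'$ yields $S(q_t)\subseteq S(q')+\kappa\|q_t-q'\|\B$, which bounds distances from points of $S(q_t)$ to $S(q')$, i.e.\ the opposite direction. To get your inequality you must invoke calmness at $q_t$ and apply it to the point $q'$, which requires $\|q'-q_t\|<\rho_t$ with $\rho_t$ the calmness radius at $q_t$. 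This exposes the second, more serious problem: Robinson's result gives a modulus $\kappa$ uniform over $\dom S$, but it does \emph{not} give a uniform lower bound on the radii $\rho_t$, and polyhedrality does not force one. For instance, with $\gph S=(\R\times\{0\})\cup([0,\infty)\times\{1\})$ the map $S$ is polyhedral and calm at every point with a common modulus, yet the calmness radius at $q<0$ is at most $|q|$ and shrinks to $0$, and $S$ indeed fails to be Lipschitz-like at $0$ for $1$. So the assertion that the finite decomposition of $\gph S$ ``forces a uniform lower bound on the reach of calmness'' is false, and without it your iteration/finite-cover scheme need not terminate (nor, as the example shows, can any scheme succeed that uses only calmness).

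The deeper symptom is that in your scheme inner semicontinuity does no quantitative work: you invoke it only to assert that a nearest point $x_{t_1}\in S(q_{t_1})$ exists, which needs no isc since $S(q_{t_1})$ is closed and nonempty. Were your argument correct, it would prove that every calm polyhedral multifunction on a convex domain is Lipschitz-like, which the example above refutes. In the paper's proof isc enters at a precise place: one sets $\tau$ as in \cref{eqn:pf-def-tau}, the supremum of $t$ for which $S(q_s)\subseteq S(q_0)+\kappa\|q_s-q_0\|\B$ holds on $[0,t]$, and isc (in the hit-and-miss form of \cref{prop:isc-equiv}) is exactly what transfers this inclusion, known for all $s<\tau$, to $s=\tau$ after truncation by $W$ --- if a point of $S(q_\tau)\cap W$ escaped the closed set $S(q_0)+\kappa\|q_\tau-q_0\|\B$, isc would force $S(q_\sigma)$ to meet the complement for some $\sigma<\tau$, contradicting the definition of $\tau$. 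The supremum argument then needs only $\rho_\tau>0$ at the single point $q_\tau$ to push past $\tau$ and conclude $\tau=1$, so no uniform radius is ever required. You should replace the iteration by this supremum/contradiction scheme; as written, the continuation argument does not close.
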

\begin{proof}
It is obvious that (iii) $\Longrightarrow$ (ii).
For proving {(i) $\Longrightarrow$ (ii)}, we use the equivalent statement 
of isc in \cref{prop:isc-equiv}, i.e.,  there exists $V\times W \in \mathcal{N}(\bar{q})\times \mathcal{N}(\bar{x}) $ such that for every open set $O$ with $S(q) \cap W \cap O \neq \emptyset$ where $q\in Q \cap V$, there exists $V'\in \mathcal{N}(q)$, such that $S(q') \cap O \neq \emptyset$ for all $q'\in Q\cap V'$. Choose any two different points $q_0, q_1 \in Q\cap V$. For each $t\in (0,1)$ we define $q_t:= (1-t) q_0 + t q_1$. Given that $Q\subset \dom S$ is convex, $S$ is calm at $q_t$ (relative to $Q$) with $\kappa$ for all $t\in [0,1]$, i.e., $\exists \rho_t >0 $ such that 
$$S(q') \subseteq S(q_t) + \kappa \| q' - q_t \| \B, \ \forall q' \in Q\cap \B_{\rho_t}(q_t). $$
We define 
\begin{equation}\label{eqn:pf-def-tau}
    \tau := \sup \left\{ t \in [0,1] \mid \text{ for each }s\in [0,t], S(q_s ) \subseteq S(q_0) + \kappa \| q_s - q_0 \| \B \right\}.
\end{equation} Then we have $\tau >0$ as $\rho_0 > 0$. Next we would like to show that
\begin{equation}\label{eqn:pf-ap-key}
    S(q_\tau) \cap W \subset S(q_0) + \kappa \|q_\tau - q_0 \| \B.
\end{equation}
Suppose on the contrary the inclusion \eqref{eqn:pf-ap-key} does not hold. Given that $S(q_0)$ is closed and so is the right-hand side of \eqref{eqn:pf-ap-key}, by letting $O:= \overline{ S(q_0) + \kappa \|q_\tau - x_0 \| \B}$ (here a bar means a complement of the set), we have that $O$ is open and $S(q_\tau) \cap W \cap O \neq \emptyset$. Note that $q_\tau \in Q \cap V$. By isc of $S$ around $(\bar{q}, \bar{x})$, there exists $\sigma \in [0, \tau)$ such that $S(q_\sigma)$ meets $O$. However, by definition of $\tau$, see \eqref{eqn:pf-def-tau}, all such $\sigma$ satisfy 
$$ S(q_\sigma ) \subseteq S(q_0) + \kappa \| q_\sigma - q_0 \| \B \subset S(q_0 ) + \kappa \| q_\tau - q_0 \| \B, $$ which contradicts the fact that $S(q_\sigma)$ meets $O$ and therefore \eqref{eqn:pf-ap-key} holds. 

Next we prove that $\tau= 1$. Suppose on the contrary $\tau <1$. By definition of $\tau$, then there exists $\phi \in (\tau, 1) $ with $\|q_\phi - q_\tau\| < \rho_\tau$, such that 
\begin{equation}\label{eqn:tau-1-contra}
    S(q_\phi) \not\subset S(q_0 ) + \kappa \|q_\phi -  q_0\| \B.
\end{equation}
By definition of $\rho_\tau$ and the calmness of $S$ at $q_\tau$,
$S(q_\phi) \subset S(q_\tau) + \kappa \|q_\phi -q_\tau \| \B$. 
Considering \eqref{eqn:pf-def-tau} and the fact that $q_0, q_\tau, q_\phi$ are collinear, we further have 
\begin{equation*}
    \begin{aligned}
    S(q_\phi) \cap W \subset S(q_\phi) \subset S(q_\tau ) + \kappa \|q_\phi - q_\tau\| \B & \subset S(q_0) + \kappa (\|q_\phi - q_\tau\| + \|q_\tau -q_0\| )  \B \\
    & = S(q_0) + \kappa (\|q_\phi  -q_0\| )  \B,
    \end{aligned}
\end{equation*}
which contradicts \eqref{eqn:tau-1-contra} and therefore $\tau = 1$. Then we have 
$S(q_1) \cap W \subset S(q_0 ) + \kappa \| q_1 - q_0 \| \B$ and by switching $q_0$ and $q_1$ we complete the proof in this direction. 

For (ii) $\Longrightarrow$(i), we know that there exist $V\in \mathcal{N}(\bar{q})$ and $W \in \mathcal{N}(\bar{x})$, such that the inclusion 
$$S(q) \cap W \subset S(q') + \kappa \|q' - q\|\B, \ \forall q, q'\in Q\cap V. $$
Let $O$ be any open set such that $S(q) \cap W \cap O \neq \emptyset$ for any $q\in Q \cap V$. Then there exists $x\in S(q)\cap W\cap O$ and $\eta >0 $ such that $\B_\eta(x) \subset O$. Also let $\nu >0 $ be a number satisfying $\kappa \nu < \eta$ and $\B_{\nu} (q)  \subset V$. 
Then we have
$$x \in S(q)\cap W  \subset S(q') + \kappa \|q-q'\|\B \subset S(q') +\eta \B,\  \forall q'\in Q \cap \B_\nu (q). $$
That is, there exists $x' \in S(q')$ with $x' \in \B_{\eta}(x)$ and therefore $ x'\in S(q') \cap O \neq \emptyset$ and $S$ is isc around $(\bar{q}, \bar{x})$ relative to $Q$.
\end{proof}

In \cite[Theorem 1]{dontchev1996} and the references therein, a result on equivalences among isc, Lipschitz-like property and single-valuedness along with Lipschitz continuity is given for the solution mapping of AVI. However, in the next example, we illustrate that the solution mapping $S(q)$ \cref{eqn:AVI} is not Lipschitz-like relative to its domain, but is so on a polyhedral subset and that the local single-valuedness is not be valid with the relative Lipschitz-like property. 

\begin{example}\label{ex:lcp-AprtQ}
Consider the solution mapping $S:\R^2\rightrightarrows \R^2$ of the AVI \cref{eqn:AVI}
with
$
 M = \left(\begin{smallmatrix}
                -1 & -1 \\
                1 & -1
              \end{smallmatrix}
              \right) $ and $ C = \R^2_+$.
Then we have, for $q = (q_1, q_2) \in \R^2$, 
\begin{equation}\label{eqn:AVI-example-S}
    S(q) = \begin{cases}
\left\{ (0,0), \ (q_1, 0) \right\}, &  0 \leq q_1 < q_2,\\
\left\{ (0,0), \ (q_1, 0) , \ (0, q_2) , \ (\frac{q_1 - q_2}{2}, \frac{q_1 + q_2}{2}) \right\},   & 0 \leq q_2 \leq q_1, \\
\left\{  (q_1, 0) ,  \ (\frac{q_1 - q_2}{2}, \frac{q_1 + q_2}{2}) \right\},  & -q_1 \leq q_2 < 0,\\
\emptyset, & \mbox{otherwise}. \end{cases}
\end{equation}
Thus $\dom S=\left\{ q\in \R^2 \mid  q_2 \geq - q_1, \ q_1 \geq 0\right\} $, which is a convex set. Let $(\bar{q}, \bar{x})=(0_2, 0_2)\in \gph S$. However $S$ does not enjoy the Lipschitz-like property relative to $\dom S$ at $\bar{q}$ for $\bar{x}$.
Indeed, for a fixed and small enough $\varepsilon >0$, let $q = (\varepsilon, \varepsilon)$ and $q' = (\varepsilon, \varepsilon + \varepsilon') $ with arbitrarily small $\varepsilon'$ with $0< \varepsilon'<\varepsilon $. Then we have $q, q' \in \dom S \cap \B_{2\varepsilon}(\bar{q})$. 
By \cref{eqn:AVI-example-S}, $S(q) = \left\{(0,0), (0,\varepsilon),(\varepsilon,0) \right\}$ and $S(q') = \left\{(0,0), (\varepsilon,0) \right\}$. 
For $$S(q)\cap \B_{2\varepsilon}(0) = S(q) \subset S(q') + \kappa \varepsilon' \B,$$ 
to hold, in particular
$$(0,\varepsilon) \in \left\{(0,0), (\varepsilon,0) \right\}+ \kappa \varepsilon' \B,$$ 
it is required $\varepsilon' \geq \frac{\varepsilon }{\kappa}$ or $\varepsilon' \geq \frac{\sqrt{2}\varepsilon }{\kappa}$. But this cannot be true when $\varepsilon'$ is sufficiently small.

Let 
$Q = \left\{ q\in \R^2 \mid  0\leq q_2 \leq q_1\right\} \subset \dom S,$  which is a polyhedral set. 
We also have $(\bar{q}, \bar{x}) \in \gph S|_Q$. Then $$N_Q(\bar{q}) = \left\{v^*\in \R^2 \mid v^*_2\leq  -v^*_1, v^*_1 \leq 0\right\}.$$
The critical cone $K(\bar{x}, -M\bar{x} - \bar{q}) = T_{\R^2_+}(\bar{x}) \cap [-M\bar{x} - \bar{q}]^\perp  = \R^2_+$. Then 
$$\mathcal{F}(K(\bar{x}, -M\bar{x} - \bar{q})) = \{0_2\} \cup \left(\R_+\times \{0\}\right) \cup  \left(\{0\} \times \R_+\right) \cup \R^2_+.$$
Therefore for all $F_2\subseteq F_1 \in \mathcal{F}(K(\bar{x}, -M\bar{x} - \bar{q}))$, the possible $F_1 - F_2 $ are: $\{0_2\},  \ \R_+ \times \{0\}, \ \R \times \{0\}, \ \{0\} \times \R_+ ,\  \{0\} \times \R, \ \R^2_+, \ \R \times \R_+, \ \R_+ \times \R , \ \R^2$. 
Let $( M^*u^*, u^*) \in (F_1 - F_2)^* \times (F_1 - F_2)$. It generates 
\begin{equation*}
        u^* \in \left(\{0\} \times \R_+ \right) \cup \left(\R_+ \times \{0\}\right) \cup \R_+ (1, -1)   \cup \left\{u^*\in \R^2  \mid 0 \leq u^*_2 \leq u^*_1\right\}  \subset -N_Q(\bar{q}).
\end{equation*}
Thus both the constraint qualification \cref{eqn:AVI-CQface} and the sufficient condition \cref{eqn:AVI-SufCondface} are satisfied. 
Therefore $S$ is Lipschitz-like relative to $Q$ at $\bar{q} = 0$ for $\bar{x} = 0$. Besides, we can also see that $S$ is not locally single-valued around $(\bar{q},\bar{x}) $ on $Q$.

\end{example}

\section{Linear complementarity problems with $Q_0$-matrices}\label{sect:LCP}
 Consider the following linear complementarity problem (in short LCP): 
\begin{equation}\label{eqn:LCP-def}
  x \geq 0,\ Mx + q \geq 0,\ x^\top (Mx + q) = 0
\end{equation}
where $q, x\in \R^n \text{ and } M$ is an $n\times n$ matrix. This is also a type of AVI with the polyhedral set $C$ in \cref{eqn:AVI} being $\R^n_+$. We denote the solution mapping of \cref{eqn:LCP-def} with $q$ being the parameter as $S$. With some specific structure of matrix $M$, we can transform the generalized critical face condition obtained in section \ref{sect:AVI} into a more explicit one by using index related notations and obtain a necessary and sufficient condition without any regularity condition.

First we write the domain and graph of $S$ respectively as follows:
\begin{align}
    \dom S= &  \left\{ q \in \R^n \mid \exists x \geq 0,\; Mx + q \geq 0,\; \langle x, \, Mx+q \rangle =0 \right\}, \nonumber\\ 
    \gph S= &  \left\{ (q, x)\in \R^n\times \R^n \mid x \geq 0,\; Mx + q \geq 0,\; \langle x, \, Mx+q \rangle =0 \right\}.\label{eq:def:LCPgph}
\end{align}
Here we introduce some properties of the sets $\dom S$ and $\gph S$. 
Let $I := \left\{1, \dots, n\right\}$. To analyze $\gph S$, we first 
denote the set of all possible index combinations of $I$ as:
\[
\mathcal{I}:=\{(I_1, I_2, I_3)\mid  I_1\cup I_2\cup I_3=I,\;I_i\cap I_j=\emptyset, \ \forall i\neq j\}, \
| \mathcal {I}|= 3^n.
\]
In terms of each combination $ (I_1, I_2, I_3)\in \mathcal{I}$, we denote:
\begin{equation}\label{eqn:LCP-gphslice}
(\gph S)_{(I_1, I_2, I_3)}:=\left\{
(q, x)\in \R^n\times \R^n\left|  \begin{array}{lll}
  x_i=0 & (Mx+q)_i>0 &\mbox{if}\; i\in I_1 \\
  x_i>0 & (Mx+q)_i=0 &\mbox{if}\; i\in I_2\\
  x_i=0 & (Mx+q)_i=0 &\mbox{if}\; i\in I_3\\
\end{array}
\right.
\right\}.
\end{equation}
We call $(\gph S)_{(I_1, I_2, I_3)}$ a slice of $\gph S$.
 Recall that {\it a semi-closed polyhedron}, which is originated from \cite{yang2010structure} (see also \cite{fang2015minimal}), 
is defined as the intersection of finitely many closed or open half-spaces. 
\begin{proposition}\label{gphSproperty}
The slice $(\gph S)_{(\cdot, \cdot, \cdot)}$ has the following properties:
  \begin{description}
  \item[(a)] each slice of $\gph S$ is uniquely determined by its index combination and all the slices of $\gph S$ are mutually exclusive, that is, $\forall (I_1, I_2, I_3)\not=(I'_1, I'_2, I'_3),$
  $$(\gph S)_{(I_1, I_2, I_3)} \not= (\gph S)_{(I'_1, I'_2, I'_3)}, \mbox{ and } (\gph S)_{(I_1, I_2, I_3)}\cap (\gph S)_{(I'_1, I'_2, I'_3)}=\emptyset; $$
  \item[(b)] $\gph S$ is a union of all the slices $(\gph S)_{(\cdot,\cdot, \cdot)}$ where the index combination runs through all elements in $\mathcal{I}$:
        \begin{equation*}
          \gph S=\displaystyle\bigcup_{(I_1, I_2, I_3)\in \mathcal{I}}(\gph S)_{(I_1, I_2, I_3)};
        \end{equation*}
  \item[(c)] for each $(I_1, I_2, I_3)\in \mathcal{I}$, $(\gph S)_{(I_1, I_2, I_3)}$ is a nonempty, convex and semi-closed polyhedral cone. Here we adopt the definition of cone $C$ as follows:
$$\forall x \in C , \   \lambda \in \R_{++} \Longrightarrow \lambda x \in C.$$ 
  \end{description}
\end{proposition}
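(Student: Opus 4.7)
The proof will hinge on the basic observation that for any $(q,x)\in\gph S$, the complementarity identity $\langle x, Mx+q\rangle = 0$ combined with the sign conditions $x \geq 0$ and $Mx+q \geq 0$ forces every index $i \in I$ to fall into exactly one of the three mutually exclusive cases used to define the slices (the case $x_i > 0$ together with $(Mx+q)_i > 0$ is ruled out, since it would contribute a strictly positive summand to the inner product while the other summands are nonnegative). So the natural order is: first establish this index partition and deduce (b), then read off (a) from the uniqueness of the partition together with the nonemptyness proved in (c).

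For part (b), given $(q,x)\in \gph S$ I would define
\[
I_1(q,x):=\{i : x_i = 0,\ (Mx+q)_i > 0\},\quad I_2(q,x):=\{i : x_i > 0,\ (Mx+q)_i = 0\},
\]
\[
I_3(q,x):=\{i : x_i = 0,\ (Mx+q)_i = 0\},
\]
verify by the observation above that $(I_1,I_2,I_3)\in \mathcal{I}$, and conclude $(q,x)\in (\gph S)_{(I_1,I_2,I_3)}$; taking the union over $(q,x)$ gives the decomposition in (b).

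For part (c) I would argue the three structural properties separately. Semi-closedness and convexity are immediate from \cref{eqn:LCP-gphslice}: each slice is the intersection of finitely many hyperplanes (for the equalities $x_i = 0$ and $(Mx+q)_i = 0$) and finitely many open half-spaces (for the strict inequalities), each of which is convex. The cone property follows because multiplying $(q,x)$ by $\lambda > 0$ scales $x$ and $Mx+q$ by $\lambda$, preserving each equality and each strict inequality. For nonemptyness, I would exhibit an explicit point: pick any $x$ with $x_{I_2} > 0$ and $x_{I_1 \cup I_3} = 0$, then set $q_{I_2} = -(Mx)_{I_2}$, $q_{I_3} = -(Mx)_{I_3}$, and choose $q_{I_1}$ componentwise with $q_{I_1} > -(Mx)_{I_1}$; this $(q,x)$ lies in $(\gph S)_{(I_1,I_2,I_3)}$.

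Finally, for part (a): mutual exclusivity is immediate because if $(q,x)$ belonged to two slices with different index triples, then the triples $(I_1(q,x), I_2(q,x), I_3(q,x))$ derived from $(q,x)$ would have to equal both, a contradiction. Uniqueness (different triples yield different slices) then follows from nonemptyness in (c): any common point of two slices forces their triples to coincide. I don't anticipate a genuine obstacle — the proof is essentially organizational — but the one spot that deserves care is tracking that a well-posed map from triples to nonempty slices exists; the explicit construction used for nonemptyness in (c) is what makes the bijection precise.
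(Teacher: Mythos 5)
Your proof is correct and follows essentially the same route as the paper's, which simply verifies all three parts directly from the definitions of $\gph S$ and the slices. The only difference is one of completeness: you supply an explicit point witnessing nonemptiness of each slice and spell out how complementarity rules out the case $x_i>0$, $(Mx+q)_i>0$, both of which the paper leaves to the reader.
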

\begin{proof}
   The first two properties and the polyhedrality in the third property can be observed by checking the definitions of graph $\gph S$, \cref{eq:def:LCPgph}, and slice $(\gph S)_{(I_1, I_2, I_3)}$, \cref{eqn:LCP-gphslice}. 
       For the conic structure and convexity mentioned in the third property, letting $(q_1,x_1), (q_2, x_2) \in (\gph S)_{(I_1, I_2, I_3)}$, and $\lambda_1, \lambda_2 >0$, we have $\lambda_1 (q_1,x_1) + \lambda_2(q_2,x_2) \in (\gph S)_{(I_1, I_2, I_3)}$ as elements $(\lambda_1 x_1 + \lambda_2 x_2)_i,\ \lambda_1 (Mx_1+q_1)_i + \lambda_2 (M x_2 + q_2)_i, \text{ for } i \in \left\{1, \dots, n\right\}$ remain their status of being $0$ or positive. 
 \end{proof}

Depending on the combination above, the following proposition gives the interpretation of the index sets $I_1, I_2, I_3$ on representation of $q$. Note that we use $E$ as an identity matrix.

\begin{proposition}[{\cite[Proposition 1.4.4]{LCP}}] \label{qrep}
  Any $q \in \dom S$ can be expressed as a positive linear combination of some columns in $E$ and $-M$ as
  \begin{equation}\label{eqn:LCP-qlinearcomb}
  q = \sum_{i: (q+Mx)_i > 0 } E_{(\cdot, i)} (q+Mx)_i  - \sum_{i : x_i > 0} M_{(\cdot, i)} x_i,
\end{equation}
where $x \in S(q)$. 
\end{proposition}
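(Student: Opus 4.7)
The plan is to obtain the identity \cref{eqn:LCP-qlinearcomb} directly from the definition of $S(q)$, using the complementarity between $x$ and $Mx+q$ to prune the index set of each sum. Since $q\in \dom S$, pick any $x\in S(q)$; by definition (see \cref{eq:def:LCPgph}) this element satisfies $x\geq 0$, $Mx+q\geq 0$ and $\langle x,Mx+q\rangle = 0$. From the trivial algebraic identity
\[
q = (q+Mx) - Mx = \sum_{i\in I} E_{(\cdot,i)}(q+Mx)_i \;-\; \sum_{i\in I} M_{(\cdot,i)} x_i,
\]
it suffices to argue that the only terms that can contribute are those indexed by $\{i : (q+Mx)_i > 0\}$ in the first sum and $\{i : x_i > 0\}$ in the second.

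For this pruning step I would use the componentwise consequence of complementarity. Since each coordinate of $x$ and of $Mx+q$ is nonnegative while $\sum_i x_i (Mx+q)_i = 0$, each product $x_i (Mx+q)_i$ vanishes, so for every $i\in I$ at least one of $x_i = 0$ or $(q+Mx)_i = 0$ holds. Hence every $i$ with $(q+Mx)_i = 0$ drops out of the first sum, and every $i$ with $x_i = 0$ drops out of the second, yielding exactly \cref{eqn:LCP-qlinearcomb}. Finally, on the surviving index sets the coefficients $(q+Mx)_i$ and $x_i$ are strictly positive by construction, so the representation is a positive linear combination of columns of $E$ and $-M$ as claimed.

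There is essentially no obstacle here; the statement is a direct reading of the complementarity conditions, and the only thing worth writing carefully is the componentwise reduction of $\langle x,Mx+q\rangle = 0$ to the disjunction $x_i = 0$ or $(Mx+q)_i = 0$ for each $i$, which follows from nonnegativity of all terms in the sum.
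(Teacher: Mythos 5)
Your argument is correct, and since the paper simply cites this statement from the LCP monograph without reproducing a proof, there is nothing in the text to compare it against. One small remark: the pruning step does not actually require complementarity at all — once you write $q = \sum_{i\in I} E_{(\cdot,i)}(q+Mx)_i - \sum_{i\in I} M_{(\cdot,i)}x_i$, the terms outside the stated index sets vanish simply because their coefficients are zero (nonnegativity of $x$ and $Mx+q$ turns ``not $>0$'' into ``$=0$''); complementarity only becomes relevant if one additionally wants the two index sets to be disjoint, which the proposition does not assert.
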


From Proposition \ref{gphSproperty} (a) and Proposition \ref{qrep}, we can see that for $(q, x)\in \gph S$, there is a unique index combination $(I_1, I_2, I_3)$ $\in \mathcal{I}$ such that $(q, x)\in$ $(\gph S)_{(I_1, I_2, I_3)}$. To avoid abuse of notations, we specify the unique index combination decided by $(q,x)$ as
\begin{align}
  I_1(q, x):=& \{ i\in I | x_i = 0, (Mx+q)_i >0\}, \nonumber\\
  I_2(q, x) :=&  \{ i\in I | x_i > 0, (Mx+q)_i =0\}, \label{eqn:LCP-idxset-qx}\\
  I_3(q, x) :=&\{ i\in I | x_i = 0, (Mx+q)_i = 0\}. \nonumber
\end{align}
Thus by the representation above we can see that when $(q,x)$ is given, for any pair $(q',x') \in (\gph S)_{(I_1 (q, x), I_2 (q, x), I_3 (q, x))}$, the index combination $(I_1(q',x')$, $I_2(q',x')$, $I_3(q',x'))$ remains the same as the one of $(q, x)$.
With the pair $(q,x)\in \gph S$ fixed, we may now proceed to the representation of $ N_{\gph S}(q, x)$. To better illustrate the structure of $ N_{\gph S}(q, x)$, we introduce a set defined by index combinations:
\begin{equation}\label{eqn:LCP-def-W}
W(I_1, I_2, I_3) := \left\{ (u^*, v^*) \in \R^n \times \R^n \left|
  \begin{array}{ll}
  (u^*_i,v^*_i)\in \{0\}\times \R &\mbox{if}\; i\in I_1 \\
(u^*_i,v^*_i)\in \R\times \{0\}&\mbox{if}\; i\in I_2  \\
  (u^*_i,v^*_i)\in  \Omega & \mbox{if}\; i\in I_3
\end{array} \right. \right\}
\end{equation}
where $\Omega:=  (\{0\} \times \R) \cup (\R\times \{0\})\cup \R^2_-$.
Note that for $(q,x)\in \gph S$, 
\begin{equation}\label{equiv:WidxVSNgphNR+}
(u^*, v^*)\in W(I_1(q,x), I_2(q,x), I_3(q,x)) \Longleftrightarrow (v^*, -u^*) \in N_{\gph N_{\R^n_+}}(x, -Mx-q). 
\end{equation}
With such an equivalence we can see that the regularity of $\gph N_{\R^n_+}$ at $(x, -Mx-q)$ requires $I_3(q,x) = \emptyset.$
By calculation in \cite{Huyen2019} we have
\begin{equation} \label{eqn:LCP-NCofgphS}
N_{\gph S}(q, x)=\left\{(u^*,\;M^* u^*+v^*) \mid (u^*, v^*)\in W(I_1(q,x), I_2(q,x), I_3(q,x)) \right\}.
\end{equation}
From \cref{eqn:LCP-NCofgphS}, we can see that the normal cone of $\gph S$ at a given pair is decided by the associated index combination.  Given the discussion above that the index combination remains unchanged for all elements in the slice $(\gph S)_{(I_1,I_2, I_3)}$, in accordance $N_{\gph S}(q, x)$ stays the same for all $ (q,x) \in (\gph S)_{(I_1, I_2, I_3)}$ as well. In this way, we can use the index combination to recognize the behavior of neighboring points as $(q,x)\xrightarrow[]{\gph S}(\bar{q}, \bar{x})$ and the related $N_{\gph S}(q,x)$. Next result shows how we can group the elements by index combination.
\begin{proposition}
For $(q^k, x^k)\overset{\gph S}\longrightarrow (\bar{q}, \bar{x})$, there is a subsequence $(q^{ki}, x^{ki})$ such that the index combination $I_1(q^{ki}, x^{ki})$, $I_2(q^{ki}, x^{ki})$, $I_3(q^{ki}, x^{ki})$ categorized as in \cref{eqn:LCP-idxset-qx} remain the same for all $i$.
\end{proposition}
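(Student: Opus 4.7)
The proof is essentially a straightforward pigeonhole argument, so the plan is short. First I would observe that by \cref{gphSproperty}(a)--(b), every pair $(q,x)\in\gph S$ lies in exactly one slice $(\gph S)_{(I_1,I_2,I_3)}$, so the assignment $(q,x)\mapsto (I_1(q,x),I_2(q,x),I_3(q,x))$ is a well-defined map from $\gph S$ into the finite set $\mathcal{I}$ of index combinations (which has cardinality $3^n$).

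Next, I would apply this map to the given sequence: for each $k\in\mathbb{N}$, set $(I_1^k,I_2^k,I_3^k):=(I_1(q^k,x^k),I_2(q^k,x^k),I_3(q^k,x^k))\in\mathcal{I}$. Since $\{(I_1^k,I_2^k,I_3^k)\}_{k\in\mathbb{N}}$ takes values in the finite set $\mathcal{I}$, the pigeonhole principle guarantees some fixed $(I_1,I_2,I_3)\in\mathcal{I}$ occurring for infinitely many indices $k$. Extracting the corresponding subsequence $(q^{k_i},x^{k_i})$ yields $(I_1(q^{k_i},x^{k_i}),I_2(q^{k_i},x^{k_i}),I_3(q^{k_i},x^{k_i}))=(I_1,I_2,I_3)$ for all $i$, which is the desired conclusion.

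There is really no obstacle here: the claim does not require the limit $(\bar q,\bar x)$ to share the common index combination, nor does it require any convergence on the right-hand side of \cref{eqn:LCP-NCofgphS}. The only content is that $\mathcal{I}$ is finite and the assignment is single-valued. Consequently I would not expect to need any structural property of $M$, any regularity of $\gph N_{\R^n_+}$, or the local geometry near $(\bar q,\bar x)$; the statement is purely combinatorial and follows immediately from \cref{gphSproperty} together with the pigeonhole principle.
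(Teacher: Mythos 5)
Your proposal is correct and follows exactly the same route as the paper: each $(q^k,x^k)$ determines a unique index combination in the finite set $\mathcal{I}$, so by pigeonhole some combination recurs infinitely often and the corresponding subsequence works. The paper's proof adds only the (unneeded for the statement) remark that $N_{\gph S}$ is then constant along that subsequence.
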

\begin{proof}
Let sequence $(q^k, x^k)\longrightarrow (\bar{q}, \bar{x})$ in $\gph S$. For each $k$, $(q^k, x^k)$ has a corresponding combination of index set $I_1(q^k, x^k) ,I_2(q^k, x^k) ,I_3(q^k, x^k)$. As such combinations of index set is finite, there is a subsequence $(q^{ki}, x^{ki})$ such that the corresponding combination of index sets $I_1(q^{ki}, x^{ki})$, $I_2(q^{ki}, x^{ki})$, $I_3(q^{ki}, x^{ki})$ remains the same. Then the normal cone $N_{\gph S}(q^{ki}, x^{ki})$ remains the same as well.
\end{proof}

Given the pair $(\bar{q},\bar{x}) \in \gph S$, we denote a collection of combinations of index sets decided by $(\bar{q},\bar{x})$ as:
\begin{equation}\label{eqn:LCP-NBHidxcomb}
  \mathcal{I}(\bar{q}, \bar{x})  = \left\{ (I_1, I_2, I_3) \in \mathcal{I} \ | \  I_1 \supseteq I_1 (\bar{q},\bar{x}),  I_2 \supseteq I_2 (\bar{q},\bar{x}), I_3 \subseteq I_3 (\bar{q},\bar{x}) \right\}.
\end{equation}
As mentioned in the proof above, there are finite combinations of index sets around $(\bar{q}, \bar{x})$. Next we illustrate what those combinations and neighboring slices are. 
\begin{lemma}\label{thm:LCP-NBHslices}
Given the pair $(\bar{q},\bar{x}) \in \gph S$,
the corresponding neighboring slices in $\gph S$ are finite as: $$(\gph S)_{(I_1, I_2, I_3) }, \ \forall (I_1, I_2, I_3)\in \mathcal{I}(\bar{q}, \bar{x}).$$
\end{lemma}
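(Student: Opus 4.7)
The plan is to show that the index triple $(I_1(q,x), I_2(q,x), I_3(q,x))$ attached to any $(q,x) \in \gph S$ sufficiently close to $(\bar{q}, \bar{x})$ lies in the set $\mathcal{I}(\bar{q}, \bar{x})$ defined in \cref{eqn:LCP-NBHidxcomb}; since distinct slices are disjoint by property (a) of \cref{gphSproperty}, this immediately bounds the number of slices meeting a neighborhood of $(\bar{q}, \bar{x})$ by $|\mathcal{I}(\bar{q}, \bar{x})| = 3^{|I_3(\bar{q}, \bar{x})|}$, which is finite.

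First I would choose a neighborhood $U$ of $(\bar{q}, \bar{x})$ small enough that continuity of the coordinate maps $(q,x) \mapsto x_i$ and $(q,x) \mapsto (Mx+q)_i$ delivers, for every $(q,x) \in U$ and every $i \in I$, the sign preservations $\bar{x}_i > 0 \Rightarrow x_i > 0$ and $(M\bar{x}+\bar{q})_i > 0 \Rightarrow (Mx+q)_i > 0$. Then, for a generic $(q,x) \in \gph S \cap U$, I would inspect each index $i$ according to its classification under $(\bar{q}, \bar{x})$. If $i \in I_1(\bar{q}, \bar{x})$, preserved positivity of $(Mx+q)_i$ together with complementarity $x_i (Mx+q)_i = 0$ and $x_i \geq 0$ forces $x_i = 0$, so $i \in I_1(q,x)$. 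Symmetrically, if $i \in I_2(\bar{q}, \bar{x})$, preserved positivity of $x_i$ combined with complementarity forces $(Mx+q)_i = 0$, so $i \in I_2(q,x)$. Because $I_1, I_2, I_3$ partition $I$, these two inclusions force $I_3(q,x) \subseteq I_3(\bar{q}, \bar{x})$, which is exactly the defining condition of $\mathcal{I}(\bar{q}, \bar{x})$ in \cref{eqn:LCP-NBHidxcomb}. Hence the slice containing $(q,x)$ appears in the claimed finite list.

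The main obstacle --- really the only delicate point --- is the bridge between the two components of the complementarity pair: continuity on its own preserves a strict inequality only in the coordinate where it already holds, so the equation $x_i (Mx+q)_i = 0$ must be invoked to promote such a strict inequality in one coordinate into a forced zero in its complementary coordinate. Once this bridge is secured for every index of $I_1(\bar{q}, \bar{x}) \cup I_2(\bar{q}, \bar{x})$, the remaining indices (those in $I_3(\bar{q}, \bar{x})$) are free to fall into any of the three labels, giving the counting $3^{|I_3(\bar{q}, \bar{x})|}$ and completing the argument.
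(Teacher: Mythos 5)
Your proposal is correct and follows essentially the same route as the paper: sign preservation of the strict inequalities defining $I_1(\bar{q},\bar{x})$ and $I_2(\bar{q},\bar{x})$ under small perturbations within $\gph S$, yielding $I_1(q,x)\supseteq I_1(\bar{q},\bar{x})$, $I_2(q,x)\supseteq I_2(\bar{q},\bar{x})$ and hence $I_3(q,x)\subseteq I_3(\bar{q},\bar{x})$, so nearby points lie only in the finitely many slices indexed by $\mathcal{I}(\bar{q},\bar{x})$. Your explicit use of the complementarity relation $x_i(Mx+q)_i=0$ to convert a preserved strict inequality into the forced zero in the complementary coordinate is in fact a slightly more careful rendering of the step the paper leaves implicit.
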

\begin{proof}
  For $(\bar{q}, \bar{x})\in \gph S$, specifically it lies on slice $(\gph S)_{(I_1 (\bar{q},\bar{x}), I_2 (\bar{q},\bar{x}) , I_3 (\bar{q},\bar{x}))}$. Consider $(q,x)$ around $(\bar{q}, \bar{x}) $ in $\gph S$. By \cref{eqn:LCP-idxset-qx}, we can see that for $I_1 (q,x), I_2 (q,x)$, there are open constraints $(Mx+q)_i >0$ and $x_i >0$ respectively and for $I_3 (q,x)$, there are equalities: $(Mx+q)_i =0 \text{ and }  x_i =0$. Therefore for any element $(q,x)\xrightarrow[]{\gph S}(\bar{q},\bar{x})$, $I_1(q,x) \text{ and } I_2(q,x)$  should include $I_1 (\bar{q},\bar{x}) \text{ and } I_2 (\bar{q},\bar{x})$ as subsets respectively. For equality constraints, it can be tended through open constraints, either $(Mx+q)_i >0$ or $x_i>0$. Then we have the indices in $I_3(\bar{q},\bar{x})$ being distributed into either $I_1(q,x)$, $I_2(q,x)$ or remained in $I_3(q,x)$. Thus $I_3(q,x) \subseteq I_3(\bar{q},\bar{x})$. Combining all these finite possible statuses, we arrive at \cref{eqn:LCP-NBHidxcomb}. With possible combinations decided, we can accordingly give neighboring slices as stated.
\end{proof}

With index combination of neighboring slices given, we can see that for a given pair $(\bar{q}, \bar{x})$, there are $| \mathcal{I}(\bar{q},\bar{x})| = 3^{|I_3(\bar{q},\bar{x})|}$ neighboring slices in $\gph S$ (including the slice where the pair lies on). For example, for $(\bar{q}, \bar{x}) = (0,0)$, $I_1 (\bar{q}, \bar{x}) =  \emptyset$, $I_2 (\bar{q}, \bar{x}) = \emptyset$, $I_3 (\bar{q}, \bar{x}) = I$. Then $\mathcal{I}(\bar{q}, \bar{x})$ gives all possible combinations of $I_1, I_2, I_3$: $\mathcal{I}(\bar{q}, \bar{x}) = \mathcal{I}$ and $| \mathcal{I}(\bar{q},\bar{x})| = |\mathcal{I}| = 3^n. $ The neighboring slices are all slices of $\gph S$.

As the assumption of the generalized Mordukhovich criterion requires the relative set to be closed and convex, it is natural to ask under what condition $\dom S$ is closed and convex. The following proposition provides the rationality behind such an assumption. 

\begin{lemma}[{\cite[Proposition 3.2.1]{LCP}}] \label{prop:domScvxcond}
For an $\text{LCP}(q,M)$ defined as in \cref{eqn:LCP-def}, the following statements are equivalent:
\begin{enumerate}[label=\normalfont(\roman*)]
    \item $M$ is a $Q_0$-matrix.
    \item $\dom S$ is a polyhedral cone in $\R^n$;
    \item $\dom S = \conv \pos (E,-M).$
\end{enumerate} 

Here a $Q_0$-matrix means the type of matrices with LCP\cref{eqn:LCP-def} being solvable whenever feasible.
\end{lemma}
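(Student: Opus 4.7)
The plan is to establish a short cycle of implications $(i)\Rightarrow(iii)\Rightarrow(ii)\Rightarrow(i)$ via three preliminary observations. First, I would identify the feasibility set $\{q\in\R^n\mid \exists\,x\geq 0,\ Mx+q\geq 0\}$ as $\pos(E,-M)$: setting $y:=Mx+q\geq 0$ yields $q=Ey-Mx$ with $y,x\geq 0$, and the reverse direction is immediate. Note that $\pos(E,-M)$ is a finitely generated, hence polyhedral, convex cone, so $\conv\pos(E,-M)=\pos(E,-M)$. Second, $\dom S$ is always stable under positive scaling (scaling a solution pair $(q,x)$ by $\lambda>0$ produces a solution of the LCP with parameter $\lambda q$), and the inclusion $\dom S\subseteq\pos(E,-M)$ is automatic since every solvable $q$ is feasible; this is also directly visible from the decomposition in \cref{qrep}. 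Third, every column of $E$ and of $-M$ lies in $\dom S$: $q=E_{(\cdot,i)}$ admits $x=0$ as a solution, while $q=-M_{(\cdot,i)}$ admits $x=e_i$, both satisfying the complementarity condition trivially.

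With these observations in hand, $(i)\Leftrightarrow(iii)$ is immediate. By definition $M$ is $Q_0$ iff every feasible $q$ is solvable, i.e., iff $\pos(E,-M)\subseteq\dom S$; combined with the automatic reverse inclusion this is exactly the equality $\dom S=\pos(E,-M)=\conv\pos(E,-M)$. The implication $(iii)\Rightarrow(ii)$ is then immediate because $\pos(E,-M)$ is polyhedral.

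The remaining direction $(ii)\Rightarrow(iii)$ is where the preliminary observations must be combined, and I expect this to be the only step requiring genuine thought. Assuming $\dom S$ is a polyhedral cone, it is simultaneously convex and closed under positive scalar multiplication. By the third observation, $\dom S$ already contains the generators $E_{(\cdot,1)},\dots,E_{(\cdot,n)},-M_{(\cdot,1)},\dots,-M_{(\cdot,n)}$, so any nonnegative combination $\sum_i a_i E_{(\cdot,i)}+\sum_j b_j(-M_{(\cdot,j)})$ with $a_i,b_j\geq 0$ can be rewritten as a nonnegative scalar multiple of a convex combination of these generators (the all-zero case reducing to $0\in\dom S$, which is trivial from $x=0$ solving LCP$(0,M)$). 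Hence $\pos(E,-M)\subseteq\dom S$, and the reverse inclusion from the second observation yields equality, closing the cycle. The one subtlety worth flagging is that both the conic \emph{and} the convexity aspects of (ii) are needed together in order to bootstrap from the individual column-generators to the full conic hull; neither alone would suffice.
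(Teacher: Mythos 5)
Your argument is correct and complete. Note first that the paper itself offers no proof of this lemma --- it is imported verbatim as Proposition~3.2.1 of the cited monograph on the LCP --- so there is no in-paper argument to compare against; what you have supplied is a clean, self-contained justification of a quoted fact. Your route (identify the feasibility set $\{q\mid\exists\,x\ge 0,\ Mx+q\ge 0\}$ with the finitely generated cone via $q=Ey-Mx$, observe $\dom S$ is always a cone sandwiched inside that feasibility set, check that each generator $E_{(\cdot,i)}$ and $-M_{(\cdot,i)}$ is solvable, and then use convexity plus positive homogeneity to absorb all nonnegative combinations) is essentially the same mechanism as the textbook proof, which phrases the last step through the decomposition of $\dom S$ into the finitely many complementary cones $\pos\bigl(E_{(\cdot,\alpha)},-M_{(\cdot,\bar\alpha)}\bigr)$; your version trades that combinatorial decomposition for the two single-column observations, which is all that is actually needed and is arguably more economical. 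All the individual verifications check out: $x=0$ solves $\mathrm{LCP}(e_i,M)$, $x=e_i$ solves $\mathrm{LCP}(-M_{(\cdot,i)},M)$, positive scaling of $(q,x)$ preserves solvability, and a convex cone containing the generators contains every nonnegative combination (with the all-zero case handled by $0\in\dom S$). The only blemish is notational: under the paper's convention $\pos$ of a finite set of columns is the positive hull (a union of rays), which is \emph{not} convex in general, so your parenthetical claim that $\conv\pos(E,-M)=\pos(E,-M)$ is false as written; the object you are actually working with throughout is $\conv\pos(E,-M)$, i.e.\ the set of all nonnegative combinations of the columns, and with that substitution every step stands.
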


The expression in Lemma \ref{prop:domScvxcond} (iii) allows one to give explicit expressions of normal cones and tangent cones of $\dom S$.
\begin{proposition}\label{prop:LCP-domTCNC}
  For the solution mapping $S$ of \cref{eqn:LCP-def}, let $M$ be a $Q_0$-matrix. For a given combination of index set $(I_1, I_2, I_3)$, any $q$ with $(q,x) \in (\gph S)_{(I_1, I_2, I_3)}$ for some $x$ has the following properties:
  \begin{equation}\label{eqn:LCP-domNC} 
      N_{\dom S}(q) = N(I_1, I_2, I_3)  := \left\{w \ \bigg| \
  \begin{matrix}
    \langle w, v \rangle = 0,  &  v\in \left(E_{(\cdot, I_1)}, -M_{(\cdot, I_2)}\right) \\
    \langle w, u \rangle \leq 0,  &  u\in \left(E_{(\cdot, \overline{I_1})}, -M_{(\cdot, \overline{I_2})}\right)
  \end{matrix}
    \right\},
  \end{equation}
  \begin{equation*}
        T_{\dom S}(q)= T(I_1, I_2, I_3)  :=  \conv \pos \left( E_{(\cdot, \overline{I_1})}, \ -M_{(\cdot, \overline{I_2})},\ \pm E_{(\cdot, I_1)}, \ \pm M_{(\cdot, I_2)} \right). 
  \end{equation*}
  Here $\overline{I_1} := I\backslash I_1 = I_2 \cup I_3$ and $v\in \left(E_{(\cdot, I_1)}, -M_{(\cdot, I_2)}\right)$ means $v$ being the column vector chosen from the matrix $\left(E_{(\cdot, I_1)}, -M_{(\cdot, I_2)}\right)$.
\end{proposition}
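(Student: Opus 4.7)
The plan is to exploit the explicit description $\dom S = \conv \pos(E,-M)$ from \cref{prop:domScvxcond}(iii), combined with the strictly positive representation of $q$ supplied by \cref{qrep}, and then read off the normal and tangent cones by standard polyhedral polar duality. First I would observe that, because $\dom S$ is a convex polyhedral cone, for any $q\in \dom S$ the identity $N_{\dom S}(q) = (\dom S)^{*}\cap \{q\}^{\perp}$ holds (the inclusion $\supseteq$ is immediate, and $\subseteq$ follows by testing $q' = 0$ and $q' = 2q$ in the definition of the normal cone). Using $\dom S = \conv \pos(E,-M)$, the polar condition becomes $\langle w, E_{(\cdot,i)}\rangle \leq 0$ and $\langle w, -M_{(\cdot,j)}\rangle \leq 0$ for every $i,j\in I$.

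Next I would invoke \cref{qrep} applied to the pair $(q,x)\in (\gph S)_{(I_1,I_2,I_3)}$. By the definitions in \eqref{eqn:LCP-idxset-qx}, the nonzero coefficients in \eqref{eqn:LCP-qlinearcomb} are precisely $(q+Mx)_i>0$ for $i\in I_1$ (paired with $E_{(\cdot,i)}$) and $x_j>0$ for $j\in I_2$ (paired with $-M_{(\cdot,j)}$). Plugging this strictly positive expansion into $\langle w,q\rangle = 0$ and using the polar inequalities just obtained, each contributing term is nonpositive and sums to zero, so every such term must vanish individually. This upgrades the inequality to equality on $E_{(\cdot,I_1)}$ and on $-M_{(\cdot,I_2)}$, while on the complementary indices $\overline{I_1}$ and $\overline{I_2}$ only the $\leq 0$ inequality survives. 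That is exactly the description $N(I_1,I_2,I_3)$ in \eqref{eqn:LCP-domNC}.

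For the tangent cone, since $\dom S$ is closed and convex the polarity $T_{\dom S}(q) = N_{\dom S}(q)^{*}$ applies. I would then dualize the polyhedral description of $N(I_1,I_2,I_3)$ via Farkas: each equality constraint contributes the full span of its defining vector, while each inequality constraint contributes a positive ray spanned by that vector. Rewriting each span as $\pos\{\pm v\}$ and collecting the resulting generators produces $\conv\pos(E_{(\cdot,\overline{I_1})},\, -M_{(\cdot,\overline{I_2})},\, \pm E_{(\cdot,I_1)},\, \pm M_{(\cdot,I_2)})$, which is exactly $T(I_1,I_2,I_3)$.

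The main conceptual point where the LCP structure enters is verifying that the \cref{qrep} expansion places its strict positivity on exactly the indices $I_1$ (for $E$-columns) and $I_2$ (for $-M$-columns), and on nothing else; this is immediate from \eqref{eqn:LCP-idxset-qx} together with complementarity, but it is the step that forces the normal cone formula to close, so this is the only place requiring care. A minor subtlety worth flagging is that another $x'\in S(q)$ may land $(q,x')$ in a different slice and produce a different index triple $(I_1',I_2',I_3')$; since $N_{\dom S}(q)$ depends only on $q$, the argument implicitly shows $N(I_1,I_2,I_3) = N(I_1',I_2',I_3')$, consistent with the statement.
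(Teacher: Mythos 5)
Your proposal is correct and follows essentially the same route as the paper: write $N_{\dom S}(q)=(\dom S)^{*}\cap[q]^{\perp}$, compute $(\dom S)^{*}$ from the generator description $\dom S=\conv\pos(E,-M)$, use the strictly positive expansion of $q$ over the columns indexed by $I_1$ and $I_2$ (Proposition \ref{qrep}) to turn the corresponding inequalities into equalities, and then polarize to get $T_{\dom S}(q)$. The only cosmetic differences are that you prove the identity $N_{\dom S}(q)=(\dom S)^{*}\cap[q]^{\perp}$ and the "sum of nonpositive terms equals zero" step explicitly where the paper cites standard references and states the conclusion directly.
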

\begin{proof}
  By \cref{prop:domScvxcond}, $\dom S$ is a polyhedral cone and so are $T_{\dom S}(q)$ and $N_{\dom S}(q)$. By \cite[Proposition 2A.3]{dontchev2009implicit},
  $$w\in N_{\dom S} (q) \Longleftrightarrow q\in \dom S, \ w\in (\dom S)^*, w\perp q. $$
  As $\dom S = \conv \pos (E, -M)$, by \cite[Lemma 6.45]{VaAn}, we have
  $$(\dom S)^* = \left\{w \ | \ \langle E_{(\cdot, i)},w \rangle \leq 0, \langle -M_{(\cdot, i)},w \rangle \leq 0,  i = 1, \dots, n \right\}.   $$
  Note that $(q,x) \in (\gph S)_{(I_1, I_2, I_3)}$, by representation \cref{eqn:LCP-qlinearcomb} of $q$, $q$ can be expressed as a positive linear combination of $E_{(\cdot, i)}, i\in I_1$ and $-M_{(\cdot, j)}, j\in I_2$. Therefore we have
  $$N_{\dom S}(q) = (\dom S)^* \cap [q]^\perp = \left\{w  \bigg| \
  \begin{matrix}
    \langle E_{(\cdot, i)},w \rangle \leq 0, i\in \overline{I_1},  & \langle -M_{(\cdot, j)},w \rangle \leq 0, j\in \overline{I_2} \\
     \langle E_{(\cdot, i)},w \rangle = 0, i\in I_1 , & \langle -M_{(\cdot, j)},w \rangle = 0, j\in I_2
  \end{matrix}
    \right\} . $$
 From the polar relation between a normal cone and a tangent cone of a convex set, 
we can derive:
  $$T_{\dom S}(q) = (N_{\dom S}(q))^* = \conv \pos \left( E_{(\cdot, \overline{I_1})},\ -M_{(\cdot, \overline{I_2})}, \  \pm E_{(\cdot, I_1)}, \ \pm M_{(\cdot, I_2)} \right).  $$
The proof is complete.
\end{proof}

From the theorem above we can see that the tangent cone and the normal cone stay the same for all $(q,x)$ on the same slice of $\gph S$ as long as the index combination is fixed. Note that when only $q$ is given, without index combination, the linear combination in \cref{eqn:LCP-qlinearcomb} is not unique.

Following \cref{thm:genCritFace}, in next theorem we prove that under some specific setting we can use only the information at the given point to obtain a sufficient and necessary condition for the relative Lipschitz-like property. Before presenting the condition, we introduce another set defined by index combination similar to $W(I_1, I_2,I_3)$:
\begin{equation}
W'(I_1, I_2, I_3) := \left\{ (u^*, v^*) \in \R^n \times \R^n \left|
  \begin{array}{ll}
  (u^*_i,v^*_i)\in \{0\}\times \R_- &\mbox{if}\; i\in I_1 \\
(u^*_i,v^*_i)\in \R_- \times \{0\}&\mbox{if}\; i\in I_2 \label{eqn:LCP-WprimeidxDef} \\
  (u^*_i,v^*_i)\in  \R^2_- & \mbox{if}\; i\in I_3
\end{array} \right. \right\}. 
\end{equation}
Note that this set is generated by replacing all $\R$ with $\R_-$ in $W(I_1, I_2, I_3)$.
\begin{theorem} \label{thm:LCP-sufneccond}
For LCP\cref{eqn:LCP-def} with $M$ being a $Q_0$-matrix, let $(\bar{q}, \bar{x}) \in \gph S$. The solution mapping $S$ has the Lipschitz-like property relative to its domain at $\bar{q}$ for $\bar{x}$ if and only if
 \begin{equation}\label{eqn:sufnecsinglepointwithW}
     \begin{aligned}
        & \forall (u^*, -M^* u^* ) \in  W(I_1(\bar{q} ,\bar{x}), I_2(\bar{q} ,\bar{x}),I_3(\bar{q} ,\bar{x})) \Longrightarrow \\
        & \hskip4cm (u^*, -M^* u^* ) \in W'(I_1(\bar{q} ,\bar{x}), I_2(\bar{q} ,\bar{x}),I_3(\bar{q} ,\bar{x})).
       \end{aligned}
 \end{equation}
\end{theorem}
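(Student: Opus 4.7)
The plan is to specialize the generalized critical face condition of Theorem~\ref{thm:genCritFace} to the LCP setting and translate it into the combinatorial form involving $W$ and $W'$. Since $M$ is a $Q_0$-matrix, Lemma~\ref{prop:domScvxcond} shows that $\dom S$ is a polyhedral cone, so it can play the role of $Q$ in Theorem~\ref{thm:genCritFace}. With $Q=\dom S$ we fall into case (b) of that theorem, which makes the constraint qualification \cref{eqn:AVI-CQface} automatic and the critical face condition \cref{eqn:AVI-SufCondface} simultaneously necessary and sufficient for the relative Lipschitz-like property. Thus it suffices to show that \cref{eqn:AVI-SufCondface} is equivalent to \cref{eqn:sufnecsinglepointwithW}.

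Writing $\bar{I}_k:=I_k(\bar{q},\bar{x})$ and $\bar{v}:=-M\bar{x}-\bar{q}$, a short computation based on the signs of $\bar{v}$ and $\bar{x}$ on the three index sets yields
\[ K(\bar{x},\bar{v})=\{0\}^{\bar{I}_1}\times \R^{\bar{I}_2}\times \R_+^{\bar{I}_3}. \]
The closed faces of this product cone are parametrized by subsets $J\subseteq \bar{I}_3$ as $F(J):=\{w\in K(\bar{x},\bar{v})\mid w_i=0,\ i\in J\}$, with $F(J^2)\subseteq F(J^1)$ iff $J^1\subseteq J^2$. For each admissible pair $F_1=F(J^1)$, $F_2=F(J^2)$ I would compute $F_1-F_2$ and its polar $(F_1-F_2)^*$ coordinate-by-coordinate and apply $(M(F_1-F_2))^*=\{u^*\mid M^*u^*\in (F_1-F_2)^*\}$ from \cite[Corollary 16.3.2]{CvxAn}.

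The union of $(F_1-F_2)\cap (M(F_1-F_2))^*$ over all admissible pairs then becomes an index-indexed disjunction: on $\bar{I}_1$ one reads $u^*_i=0$; on $\bar{I}_2$ one reads $(M^*u^*)_i=0$; and on each $i\in\bar{I}_3$ exactly three alternatives appear, namely $u^*_i=0$, or $(M^*u^*)_i=0$, or $u^*_i\ge 0$ together with $(M^*u^*)_i\le 0$. Under the substitution $\tilde{u}:=-u^*$, this coincides precisely with $(\tilde{u},-M^*\tilde{u})\in W(\bar{I}_1,\bar{I}_2,\bar{I}_3)$, because the three alternatives match the three pieces of $\Omega=(\{0\}\times\R)\cup(\R\times\{0\})\cup \R^2_-$.

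Finally, a direct coordinate check against the explicit formula for $N_{\dom S}(\bar{q})$ in Proposition~\ref{prop:LCP-domTCNC} shows that $u^*\in N_{\dom S}(\bar{q})$ iff $(u^*,-M^*u^*)\in W'(\bar{I}_1,\bar{I}_2,\bar{I}_3)$. Combining the two translations, the condition $(F_1-F_2)\cap (M(F_1-F_2))^*\subseteq -N_{\dom S}(\bar{q})$ for all admissible $(F_1,F_2)$ reduces, via $\tilde{u}=-u^*$, to \cref{eqn:sufnecsinglepointwithW}. The main delicate point — and the step where it is easy to trip — is keeping the three sign conventions aligned: the ``$-N_{\dom S}$'' on the right of \cref{eqn:AVI-SufCondface}, the ``$-M^*u^*$'' hard-coded into the definitions of $W$ and $W'$, and the final change of variables $\tilde{u}=-u^*$.
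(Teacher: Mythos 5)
Your proposal is correct and follows essentially the same route as the paper: reduce to Theorem~\ref{thm:genCritFace} with $Q=\dom S$ (case (b), so the constraint qualification is automatic and the face condition is both necessary and sufficient), compute $K(\bar{x},\bar{v})$ and its faces coordinatewise, translate $\bigcup\bigl((F_1-F_2)\cap (M(F_1-F_2))^*\bigr)$ into membership in $W$ via a sign flip, and identify $N_{\dom S}(\bar{q})$ with $W'$ via Proposition~\ref{prop:LCP-domTCNC}. The only difference is cosmetic (parametrizing faces by their zero-index sets $J\subseteq \bar{I}_3$ rather than the paper's pair $(I',I'')$), and your sign bookkeeping matches the paper's equivalence \cref{equiv:LCP-facetoW}.
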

\begin{proof}
   Given \cref{prop:domScvxcond} and $M$ being a $Q_0$-matrix, $\dom S$ is a polyhedral cone. 
   Then the sufficient condition \cref{eqn:AVI-SufCondface} in \cref{thm:genCritFace} is also necessary in this case.
   Let $\bar{v} =  -M\bar{x}- \bar{q} \in N_{\R^n_+}(\bar{x})$. Recall that $K(\bar{x}, \bar{v}) = T_{\R^n_+} (\bar{x}) \cap [\bar{v}]^\perp.$ Considering the possible signs of each pair $(\bar{x}_i, \bar{v}_i)$, we have the following component-wise representation of the critical cone $K(\bar{x}, \bar{v})$ by virtue of the associated index combination in \cref{eqn:LCP-idxset-qx}:
  $$K(\bar{x}, \bar{v}) = \{0_{I_1(\bar{q}, \bar{x})} \} \times \R^{I_2(\bar{q}, \bar{x})}  \times \R^{I_3(\bar{q}, \bar{x})}_+.$$
 As the closed face of $K(\bar{x},\bar{v})$ comes in the form of $F = K(\bar{x},\bar{v})\cap [v^*]^\perp$ where $v^*\in K(\bar{x},\bar{v})^*$ and the possible closed faces of all the components of $K(\bar{x},\bar{v})$ being:
  $$\mathcal{F}(0) = \left\{ \{0 \} \right\} ,   \ \mathcal{F}(\R) = \{\R\}, \ \mathcal{F}(\R_+) =\left\{\{0\}, \R_+ \right\},$$
we thus have
$$\mathcal{F}(K(\bar{x}, \bar{v})) =   \{0_{I_1(\bar{q},\bar{x}) \cup I'}\} \times \R^{I_2(\bar{q},\bar{x})} \times \R_+^{I_3(\bar{q},\bar{x}) \backslash I'},
$$
 where $I' \subset I_3(\bar{q},\bar{x})$. 
Let $F_1, F_2$ be any two faces of $\mathcal{F}(K(\bar{x}, \bar{v}))$ with $F_2 \subseteq F_1$. By considering all the combinations of algebraic operations of three sets $\{0\}, \R_+$ and $\R$, we further have, for some subset $I'' \subset I_3(\bar{q},\bar{x}) \backslash I'$, 
\begin{equation} \label{eqn:sufnecsinglepointdiffcf}
F_1 - F_2 = \{0_{I_1(\bar{q},\bar{x}) \cup I'}\} \times \R^{I_2(\bar{q},\bar{x})\cup I''} \times \R^{I_3(\bar{q},\bar{x})\backslash (I'\cup I'')}_+.
\end{equation}
Therefore
$$\bigcup_{\substack{F_2 \subseteq F_1\\ F_1, F_2 \in \mathcal{F}(K(\bar{x}, \bar{v}))}} F_1 - F_2 =   \bigcup_{\substack{I', I''\subseteq I_3(\bar{q},\bar{x}) \\I'\cap I'' = \emptyset} } \{0_{I_1(\bar{q},\bar{x}) \cup I'}\} \times \R^{I_2(\bar{q},\bar{x})\cup I''} \times \R^{I_3(\bar{q},\bar{x})\backslash (I'\cup I'')}_+.$$ 
And noting also that the polar sets of three sets $\{0\}, \R$ and $\R_+$ are respectively
$$\{0\}^* = \R, \ \R^* = \{0\}, \ (\R_+)^* = \R_-,$$
it follows from \cref{eqn:sufnecsinglepointdiffcf} that we have
$$\bigcup_{\substack{F_2 \subseteq F_1\\ F_1, F_2 \in \mathcal{F}(K(\bar{x}, \bar{v}))}} \left( F_1 - F_2 \right)^* =   \bigcup_{\substack{I', I''\subseteq I_3(\bar{q},\bar{x}) \\I'\cap I'' = \emptyset} } \R^{I_1(\bar{q},\bar{x}) \cup I'} \times \{0\}_{I_2(\bar{q},\bar{x})\cup I''} \times \R^{I_3(\bar{q},\bar{x})\backslash (I'\cup I'')}_-.$$ 

Therefore by definition \cref{eqn:LCP-def-W} we can see that 
\begin{equation}\label{equiv:LCP-facetoW}
    \begin{aligned}
       (M^* u^*, u^* ) & \in \bigcup_{\substack{F_2 \subseteq F_1\\ F_1, F_2 \in \mathcal{F}(K(\bar{x}, \bar{v}))}} \bigg( (F_1 - F_2)^* \times (F_1-F_2) \bigg) \\
        & \hskip2cm \Longleftrightarrow  (-u^*, M^* u^*) \in  W(I_1(\bar{q}, \bar{x}), I_2(\bar{q}, \bar{x}), I_3(\bar{q}, \bar{x})).
    \end{aligned}
\end{equation}

Therefore, by adjusting the direction of $u^*$, the condition \cref{eqn:AVI-SufCondface} is equivalent to 
$$(u^*, -M^*u^*) \in W(I_1(\bar{q}, \bar{x}), I_2(\bar{q}, \bar{x}), I_3(\bar{q}, \bar{x}))\Longrightarrow u^* \in N_{\dom S}(\bar{q}).$$
As $\langle u^*,  E_{(\cdot, i)} \rangle =  u^*_i$ and $\langle u^*,  -M_{(\cdot, i)} \rangle  =  -(M^* u^*)_i$ for $i = 1, \cdots, n$, we can see that
  \begin{equation}  \label{equiv:WprimeSetVSN123}
   \begin{aligned}
      &   \begin{cases}
  \langle u^*, E_{(\cdot, i)} \rangle =0, i \in I_1  , & \langle u^*, M_{(\cdot, j)} \rangle =0, j\in I_2 \\
  \langle u^*, E_{(\cdot, i)} \rangle \leq 0, i \in \overline{I_1}  , & \langle u^*, M_{(\cdot, j)} \rangle \geq 0, j\in \overline{I_2}
  \end{cases} \ \Longleftrightarrow  \\
  & \hskip5cm \begin{cases}
                                 (u^*_i, -(M^* u^*)_i) \in \{0\} \times \R_- , & \ i \in I_1,\\
                                 (u^*_i, -(M^* u^*)_i )\in \R_- \times \{0\}, & \  i \in I_2, \\
                                 (u^*_i, -(M^* u^*)_i )\in \R^2_- ,  & \  i \in I_3.
                               \end{cases}
    \end{aligned} 
  \end{equation}
  Together with \cref{eqn:LCP-domNC}, it means that
$$ u^* \in N_{\dom S}(\bar{q}) \Longleftrightarrow (u^*, -M^*u^*) \in W'(I_1(\bar{q}, \bar{x}), I_2(\bar{q}, \bar{x}), I_3(\bar{q}, \bar{x})).$$  
Thus we arrive at the sufficient and necessary condition \cref{eqn:sufnecsinglepointwithW}.
\end{proof}

\begin{remark}
  When $\bar{q} \in \inte \dom S, \ N_{\dom S}(\bar{q}) = \{0\}$, the criterion \cref{eqn:sufnecsinglepointwithW}  reduces to
  $$(u^*, -M^* u^*) \in W \left(I_1(\bar{q},\bar{x}),I_2(\bar{q},\bar{x}),I_3(\bar{q},\bar{x})\right)\Longrightarrow u^* =0,$$ 
  which is equivalent to the sufficient and necessary condition for the Lipschitz-like property of $S$ in \cite[Theorem 4]{dontchev1996}. 
\end{remark}

Next we consider the graphical modulus $\lip_{\dom S} S$ based on equality \cref{eqn:LCP-projcode} and give further simplification.
        \begin{corollary}
With the setting of \cref{thm:LCP-sufneccond}, when \cref{eqn:sufnecsinglepointwithW} holds, 
the graphical modulus of $S$ relative to $\dom S$ at $\bar{q}$ for $\bar{x}$ is given by
  \begin{equation}\label{eqn:LCP-lipXS2}
    {\lip}_{\dom S} S(\bar{q}, \bar{x}) = \sup_{(I_1, I_2, I_3) \in \mathcal{I}(\bar{q}, \bar{x})} \  \sup_{u^* \in U(I_1, I_2, I_3)} \frac{d(u^*, N(I_1, I_2, I_3))}{\left\| \left( \begin{matrix}     M^*_{(I_2,\cdot)}  u^* \\ \left( - M^*_{(I_3,\cdot)}  u^* \right)_+\end{matrix}  \right)  \right\| },
  \end{equation}
    where 
    \begin{equation*}\label{Udef}
    U(I_1, I_2, I_3) = \left\{ u^* \in \R^n \left| \begin{matrix}
                                            u^*_i= 0, &   i\in I_1  \\
                                            u^*_i \in \R, & i\in I_2 \\
                                            u^*_i \in \R_-, & i\in I_3
                                                 \end{matrix} \right. \right\},
  \end{equation*} 
  $M^*_{(I_2,\cdot)}$ means the matrix with the rows of $M^*$ indexed by $I_2$ and for a vector $y$, $(y)^+$  means the vector $\left( (y_i)^+ \right)$. 
\end{corollary}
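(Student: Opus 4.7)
The plan is to combine the projectional-coderivative identity \cref{eqn:LCP-projcode} with the standard fact that the graphical modulus equals the outer norm of the (projectional) coderivative, i.e.\ $\lip_{\dom S} S(\bar q, \bar x) = \sup\{\|t\|/\|y^*\| : t \in D^*_{\dom S} S(\bar q\mid\bar x)(y^*),\ y^* \neq 0\}$, which follows from \cref{thm-genMordcri} through the sublinear-homogeneity argument underlying the ordinary Mordukhovich modulus formula. The combinatorial structure of \cref{eqn:LCP-projcode}, captured by the sets $W(I_1, I_2, I_3)$, $T(I_1, I_2, I_3)$, and $N(I_1, I_2, I_3)$ from \cref{eqn:LCP-def-W} and \cref{prop:LCP-domTCNC}, will then turn the supremum into a finite union indexed by $\mathcal{I}(\bar q, \bar x)$.

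First I would unpack a generic element of $D^*_{\dom S} S(\bar q\mid\bar x)(y^*)$ using \cref{eqn:LCP-projcode}: for some $(I_1, I_2, I_3)\in \mathcal{I}(\bar q, \bar x)$, some $w^* \in N(I_1,I_2,I_3)$, and some $u^*$ constrained by a $W(I_1,I_2,I_3)$-membership that also ties in $y^*$, one has $t = \proj_{T(I_1,I_2,I_3)}(-u^* + w^*)$. Unwinding the $W$-condition componentwise gives $u^*_i = 0$ for $i\in I_1$, $y^*_i = (M^*u^*)_i$ for $i\in I_2$, and on $I_3$ one of the three branches of $\Omega$. The first two branches on any index of $I_3$ are indistinguishable from membership in $I_1$ or $I_2$ of a different index combination already in $\mathcal{I}(\bar q,\bar x)$, so those cases are absorbed by the outer union over $\mathcal{I}(\bar q,\bar x)$, and without loss of generality we may restrict to the $\R^2_-$-branch on $I_3$. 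This yields $u^*\in U(I_1,I_2,I_3)$ together with the one-sided inequality $y^*_i \leq (M^*u^*)_i$ on $I_3$ (after the sign convention fixed by $U$).

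Second, I would evaluate the resulting extrema. For the numerator, the Moreau decomposition of the mutually polar convex cones $T(I_1,I_2,I_3)$ and $N(I_1,I_2,I_3)$ gives $\|\proj_{T(I_1,I_2,I_3)}(-u^*+w^*)\| = d(-u^*+w^*, N(I_1,I_2,I_3))$. The elementary bound $d(v+n,N)\leq d(v,N)$ for every $v$ and every $n\in N$---which holds because $\proj_N(v)+n\in N$ by the convex-conic closure of $N$---then shows the supremum over $w^*\in N(I_1,I_2,I_3)$ is attained at $w^*=0$ and equals $d(u^*, N(I_1,I_2,I_3))$ once the sign convention in $U$ is absorbed. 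For the denominator, I minimise $\|y^*\|$ coordinate by coordinate: the $I_1$-coordinates are free and optimally set to zero, the $I_2$-coordinates are pinned to $(M^*u^*)_i$, and the $I_3$-inequality combined with the sign of $u^*$ forces $|y^*_i|$ to equal the positive-part entry $(-M^*_{(I_3,\cdot)} u^*)_+$ appearing in \cref{eqn:LCP-lipXS2}. Assembling these componentwise contributions reproduces the claimed ratio.

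The main obstacle lies in the combinatorial reduction of step one: one must verify cleanly that taking the outer union over $\mathcal{I}(\bar q,\bar x)$ really does absorb the first two $\Omega$-branches on $I_3$, leaving only the $\R^2_-$-branch, and that the resulting sign of $u^*$ on $I_3$ matches the $\R_-$ constraint encoded in $U(I_1,I_2,I_3)$. Once this reduction is carried out, what remains is a routine Moreau-decomposition and componentwise-minimisation calculation that assembles into \cref{eqn:LCP-lipXS2}.
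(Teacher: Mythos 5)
Your proposal is correct and follows essentially the same route as the paper's proof: the modulus as the outer norm of the projectional coderivative computed from \cref{eqn:LCP-projcode}, the identity $\|{\rm proj}_{T(I_1,I_2,I_3)}(\cdot)\| = d(\cdot, N(I_1,I_2,I_3))$ for the mutually polar cones (with the supremum over the $N_{\dom S}$-shift attained at zero), componentwise minimisation of $\|y^*\|$ yielding the positive-part denominator, and absorption of the $\{0\}\times\R$ and $\R\times\{0\}$ branches of $\Omega$ on $I_3$ into the union over $\mathcal{I}(\bar q,\bar x)$. The only point where your word ``indistinguishable'' must be sharpened---and it is the step you yourself flag---is that reassigning indices from $I_3$ to $I_1$ or $I_2$ shrinks the normal cone, $N(I_1\cup I_{31}, I_2\cup I_{32}, I_{33})\subseteq N(I_1,I_2,I_3)$, so the refined combination has an equal denominator but a numerator that is at least as large; it is this one-sided domination, not equality, that lets the supremum be taken over the $\R^2_-$-branch alone, exactly as in the paper's cases 3(a)--(c).
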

\begin{proof}
  By \cref{prop:AVI-ProjCde-Expression}, 
    \begin{align}
         &  ~~ D^*_{\dom S} S\left( \bar{q}  \mid \bar{x} \right) (y^*) \nonumber\\
    = &\displaystyle \bigcup_{(q,x) \in \gph S\cap \B_{\varepsilon}(\bar{q}, \bar{x})} \bigg\{ \left. {\rm proj}_{T_{dom S}(q)} (u^*) \ \right| \ y^* = -M^* u^*-v^*,  \nonumber\\[-0.8cm]
    &\hskip6cm \left( v^*, -u^*\right) \in N_{\gph N_{\R^m_+}}(x, -Mx-q)  \bigg\} \nonumber\\
    = &\displaystyle   \bigcup_{(q,x) \in \gph S\cap \B_{\varepsilon}(\bar{q}, \bar{x})} \bigg\{ \left. {\rm proj}_{T_{dom S}(q)} (u^*) \ \right| \ y^* = -M^* u^* - v^*,\nonumber\\[-0.8cm]
    &\hskip6cm  \left(u^*,v^*\right) \in W\left(I_1(q,x), I_2(q,x), I_3(q,x) \right)   \bigg\}\nonumber\\
    = &\displaystyle   \bigcup_{\left(I_1, I_2, I_3\right) \in \mathcal{I}(\bar{q}, \bar{x})}  \big\{  {\rm proj}_{T(I_1, I_2,I_3) }(u^*) \mid  \exists u^* \text{ s.t. } \left( u^*, -y^*-M^*u^* \right) \in W(I_1, I_2 ,I_3 )\big\} \label{eqn:LCP-projcode},
    \end{align}
    where the second equality can be derived via \cref{equiv:WidxVSNgphNR+} and the third one is a combination of \cref{prop:LCP-domTCNC} and \cref{thm:LCP-NBHslices}.

    Note that by the polar relation between $N(I_1, I_2, I_3)$ and $T(I_1, I_2, I_3)$ and their convexity, we have
    \[\|{\rm proj}_{T(I_1, I_2,I_3) }(u^*) \| = d(u^*, N(I_1, I_2, I_3)), \ \mbox{ for any } u^*\in \R^n.\] 
    By \cref{thm-genMordcri} and equality \cref{eqn:LCP-projcode} we can see that
         \begin{equation}\label{eqn:LCP-lipXS1}
        {\lip}_{\dom S}S(\bar{q}, \bar{x}) = \sup_{(I_1, I_2, I_3) \in \mathcal{I}(\bar{q}, \bar{x})} \  \sup_{(u^*,v^*)  \in W(I_1, I_2, I_3)} \frac{d(u^*, N(I_1, I_2, I_3))}{\left\|M^* u^* + v^*\right\|}.
        \end{equation}
    Here we further simplify \cref{eqn:LCP-lipXS1} by getting rid of $v^*$.
The second supremum in \cref{eqn:LCP-lipXS1} is equivalent to first maximizing the fractional relative to $v^*$ and then relative to $u^*$. Therefore we simplify the part of maximizing the fractional relative to $v^*$, which is equivalent to minimizing $\|M^*u^* +v^* \|$, i.e., $\displaystyle \min_{v^*_i } | (M^* u^*)_i + v^*_i |$ for each $i \in I$. 
Note that $(M^* u^*)_i  = M^*_{(i,\cdot)} u^*$.
By \cref{eqn:LCP-def-W}, we have the following three cases: 
  \begin{enumerate}
    \item $i\in I_1:\ (u^*_i, v^*_i ) \in \{0\} \times \R$. $\displaystyle \min_{v^*_i \in \R} | M^*_{(i,\cdot)} u^* + v^*_i| = 0$ by taking $v^*_i = -M^*_{(i,\cdot)} u^*$.
    \item $i\in I_2:\ (u^*_i, v^*_i ) \in \R \times \{0\}$.  $\displaystyle \min_{v^*_i =0 }|M^*_{(i,\cdot)} u^* + v^*_i| = | M^*_{(i,\cdot)} u^* |$.
    \item $i\in I_3:\ (u^*_i, v^*_i ) \in \Omega $. 
      Recall that $\Omega= (\{0\} \times \R) \cup (\R\times \{0\})\cup \R^2_-$. Next we define the following index subsets of $I_3$ considering the possibilities of the values of $u^*_i$ and $v^*_i$ 
      respectively:
        \begin{eqnarray*}
          I_{31}&:=& \left\{ i\in I_3 \ | \  (u^*_i, v^*_i ) \in \{0\} \times \R \right\}; \\
          I_{32}&:=& \left\{ i\in I_3 \ | \  (u^*_i, v^*_i ) \in \R \times \{0\} \right\}; \\
          I_{33}&:=&\left\{ i\in I_3 \ | \  (u^*_i, v^*_i ) \in \R^2_- \right\}.
        \end{eqnarray*}
    Thus we have the three subcases as follows:    
    \begin{enumerate}
      \item $i\in I_{31}$: we have $ (u^*_i, v^*_i ) \in  \{0\} \times \R$, thus $\displaystyle \min_{v^*_i \in \R} | M^*_{(i,\cdot)} u^* + v^*_i| = 0$. 
      \item $i\in I_{32}$: we have $ (u^*_i, v^*_i ) \in \R \times \{0\} $, thus $\displaystyle \min_{v^*_i =0} | M^*_{(i,\cdot)} u^* + v^*_i| = | M^*_{(i,\cdot)} u^* |$.
      \item $i\in I_{33}$: we have $ (u^*_i, v^*_i ) \in \R^2_-$.  Here further dividing is required depending on the sign of the value of $M^*_{(i,\cdot)} u^*$:
          \begin{enumerate}
            \item $M^*_{(i,\cdot)} u^* \in \R_+$. Then $\displaystyle \min_{v^*_i \in \R_-} | M^*_{(i,\cdot)} u^* + v^*_i| = 0$ by taking $v^*_i = -M^*_{(i,\cdot)} u^*$.
            \item $M^*_{(i,\cdot)} u^* \in \R_-$. Then $\displaystyle \min_{v^*_i \in \R_-} | M^*_{(i,\cdot)} u^* + v^*_i| = |M^*_{(i,\cdot)} u^* |.$
          \end{enumerate}
    \end{enumerate}
    \end{enumerate}
   Take $I'_1 = I_1\cup I_{31}$, $I'_2 = I_2 \cup I_{32}$ and $I'_3 = I_3 \backslash (I_{31}\cup I_{32})  = I_{33}$. Then we have $(I'_1, I'_2, I'_3) \in \mathcal{I}(\bar{q}, \bar{x})$ as well by definition of $\mathcal{I}(\bar{q},\bar{x})$ \cref{eqn:LCP-NBHidxcomb}. Besides, by \cref{eqn:LCP-domNC}
    we have $N(I'_1,I'_2,I'_3) \subseteq N(I_1, I_2, I_3)$. Thus for any $u^* \in \R^n,$
    $$d(u^*, N(I'_1, I'_2, I'_3)) \geq d(u^*, N(I_1, I_2, I_3)).$$
    Note that by previous analysis on the subcases, the selection on $u^*$ remains the same, i.e., 
    $$u^*_i \in \begin{cases}\{0\}, \ &  i\in I_1 \cup I_{31}  = I'_1\\ \R \ &  i\in I_2 \cup I_{32}  = I'_2 \\ \R_- , \ & i \in I_{33} = I'_3 \end{cases}.$$
    Then we have 
    $$ 
    \frac{d(u^*, N(I_1, I_2, I_3))}{\left\| \left( \begin{matrix}     M^*_{(I_2\cup I_{32},\cdot)}  u^* \\ \left( - M^*_{(I_{33},\cdot)}  u^* \right)_+\end{matrix} \right)  \right\| } \leq \frac{d(u^*, N(I'_1, I'_2, I'_3))}{\left\| \left( \begin{matrix}     M^*_{(I'_2,\cdot)}  u^* \\ \left( - M^*_{(I'_3,\cdot)}  u^* \right)_+\end{matrix}   \right)\right\|}.
    $$
      Then the calculation of cases 3(a), 3(b) for the modulus \cref{eqn:LCP-lipXS1} is redundant when the index combination $(I_1,I_2,I_3)$ runs over all possible elements in $\mathcal{I}(\bar{q},\bar{x})$. 
    So we can consider $I_3$ with the case 3(c) only when the index combination $(I_1, I_2, I_3)$ is fixed. 
    Therefore minimizing $\|M^*u^* +v^* \|$ relative to $v^*$ can be achieved by taking the sum of the minimal values of case 1, case 2 and case 3(c). The combined value is
    $$\sqrt{\sum_{i \in I_2} | M^*_{(i,\cdot)} u^* |^2 + \sum_{i\in I_3} \left( \max (0, -M^*_{(i,\cdot)}  u^* ) \right)^2} = \left\|\begin{matrix}     M^*_{(I_2,\cdot)}  u^* \\ \left( - M^*_{(I_3,\cdot)}  u^* \right)_+\end{matrix}  \right\|$$
    and the corresponding parameter set of $u^*$ is just $U(I_1, I_2, I_3)$. The proof is complete.
    \end{proof}

\section{Conclusions}\label{sect:conclu}
In this paper, we presented several upper estimates of projectional coderivative of the solution mapping for parametric systems. We compared two sufficient conditions that were given by projectional coderivative and directional limiting coderivative respectively. We developed a sufficient condition of the relative Lipschitz-like property of the solution mapping for affine variational inequalities as a generalized critical face condition. For a linear complementarity problem with $Q_0$ matrix, we obtained a sufficient and necessary condition for its solution mapping to have Lipschitz-like property relative to its domain. 

\section*{Acknowledgments} 
The authors would like to thank Li Minghua and Meng Kaiwen for their useful discussions and comments during the development of this project.


\bibliographystyle{siamplain}
\bibliography{References}
\end{document}